\newtheorem{theorem}{Theorem}[section]
\newtheorem{lemma}[theorem]{Lemma}
\newtheorem{proposition}[theorem]{Proposition}
\newtheorem{corollary}[theorem]{Corollary}
\newtheorem{conjecture}[theorem]{Conjecture}
\newtheorem{claim}[theorem]{Claim}
\newtheorem{definition}[theorem]{Definition}
\numberwithin{equation}{section}
\def\eps{{\varepsilon}}
\def\int{{\mbox{\rm int}}}
\def\disc{{\mbox{\rm ex}}}
\def\ex{{\mbox{\rm ex}}}
\def\pn{{\mbox{\rm pn}}}
\def\COMMENT#1{}
\title{Decomposing tournaments into paths}
\author{Allan Lo}
\address{Allan Lo, School of Mathematics, University of Birmingham, Edgbaston, Birmingham, B15 2TT, UK.}
\email{s.a.lo@bham.ac.uk}
\thanks{A. Lo was partially supported by the EPSRC grant no. EP/P002420/1 (A.~Lo).}
\author{Viresh Patel}
\address{Viresh Patel, Korteweg de Vries Institute for Mathematics, University of Amsterdam, Amsterdam, The Netherlands.}
\email{vpatel@uva.nl}
\thanks{V. Patel was partially supported by the Netherlands Organisation for Scientific Research (NWO) through the Gravitation Programme Networks (024.002.003).}
\author{Jozef Skokan}
\address{Jozef Skokan, Department of Mathematics, London School of Economics, London, UK \and  \\
University of Illinois at Urbana-Champaign, Urbana, IL 61801, USA.} 
\email{j.skokan@lse.ac.uk}
\thanks{J. Skokan was partially supported by National Science Foundation Grant DMS-1500121.}
\author{John Talbot}
\address{John Talbot, Department of Mathematics, University College London, London, WC1E 6BT, UK.}
\email{j.talbot@ucl.ac.uk}
\date{\today}
\begin{document}

\maketitle
\begin{abstract}
We consider a generalisation of Kelly's conjecture which is due to Alspach, Mason, and Pullman from 1976. Kelly's conjecture states that every regular tournament has an edge decomposition into  Hamilton cycles, and this was proved by K{\"u}hn and Osthus for large tournaments. The conjecture of Alspach, Mason, and Pullman 
asks for the minimum number of paths needed in a path decomposition  of a general tournament $T$. There is a natural lower bound for this number in terms of the degree sequence of $T$ and it is  conjectured that this bound is correct for tournaments of even order. Almost all cases of the conjecture are open and we prove many of them.
\end{abstract}

\section{Introduction}
\label{se:intro}
\thispagestyle{empty}


There has been a great deal of recent activity in the study of decompositions of graphs and hypergraphs. The  prototypical question in this area asks whether, for some given class $\mathcal{C}$ of graphs, hypergraphs or directed graphs, the edge set of each $H \in \mathcal{C}$ can be decomposed into parts satisfying some given property. 
The development of the robust expanders technique by K{\"u}hn and Osthus \cite{KO} was a major breakthrough leading to the resolution of several conjectures concerning decompositions of (directed) graphs into spanning structures such as matchings and Hamilton cycles; see e.g.\ \cite{CKLOT,KO2}.

The problem we address in this paper is that of decomposing tournaments into directed paths. A \emph{tournament} is an orientation of the complete graph, that is, one obtains a tournament by assigning a direction to each edge of the (undirected) complete graph. Let us begin however in the more general setting of directed graphs. 

Let $D$ be a directed graph with vertex set $V(D)$ and edge set $E(D)$.
When referring to paths and cycles in directed graphs, we always mean directed paths and directed cycles.
  A \emph{path decomposition} of $D$ is a collection of paths $P_1, \ldots, P_k$ of $D$ whose edge sets $E(P_1), \ldots, E(P_k)$ partition $E(D)$. Given any directed graph $D$, it is natural to ask what the minimum number of paths is in a path decomposition of $D$. 
This is called the \emph{path number} of $D$ and is denoted $\pn(D)$. A natural lower bound on $\pn(D)$ is obtained by examining the degree sequence of $D$. For each vertex $v \in V(D)$, write $d^+_D(v)$ (resp.\ $d_D^-(v)$) for the 
number of edges exiting (resp.\ entering) $v$. 
Define the \emph{excess} at $v$ to be $\ex(v):= d_D^+(v) - d_D^-(v)$ and similarly define the \emph{positive} and \emph{negative excess} at $v$ to be respectively $\ex^+(v):= \max\{\ex(v), 0\}$ and $\ex^-(v):= \max\{-\ex(v), 0\}$. It is easy to see that the excesses of all vertices sum to zero. 

We note that in any path decomposition of $D$, at least $\ex^+(v)$ paths must start at $v$ and at least $\ex^-(v)$ paths must end at $v$. Therefore we have
\[
\pn(D) \geq \ex(D) := \sum_{v \in V(D)} \ex^+(v) = \sum_{v \in V(D)} \ex^-(v) = \frac{1}{2}\sum_{v \in V(D)} |\ex(v)|  ,
\]
where $\ex(D)$ is called the \emph{excess} of $D$. Any digraph for which equality holds above is called \emph{consistent}.
Clearly not every digraph is consistent; in particular any nonempty digraph $D$ of excess $0$ cannot be consistent. However, Alspach, Mason, and Pullman~\cite{AMP} conjectured that every even tournament is consistent.

\begin{conjecture}[Alspach, Mason, Pullman~\cite{AMP}]
\label{conj:Pull}
Every tournament $T$ with an even number of vertices satisfies $\pn(T) = \ex(T)$.
\end{conjecture}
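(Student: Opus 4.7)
The plan is to establish the matching upper bound $\pn(T)\le \ex(T)$; the lower bound has already been recorded in the excerpt. My target is a path decomposition of $T$ consisting of exactly $\ex(T)$ paths. Write $S^+=\{v\in V(T):\ex(v)>0\}$ and $S^-=\{v\in V(T):\ex(v)<0\}$. Because $|V(T)|$ is even, every vertex has odd degree sum $n-1$ and therefore $\ex(v)\neq 0$, so $V(T)=S^+\cup S^-$ and $\sum_{v\in S^+}\ex^+(v)=\ex(T)$.

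The first step is a reduction to an Eulerian remainder. I would aim to find a collection $\mathcal{P}=\{P_1,\dots,P_{\ex(T)}\}$ of edge-disjoint paths in $T$ such that each $v\in S^+$ is the starting vertex of exactly $\ex^+(v)$ paths in $\mathcal{P}$, each $v\in S^-$ is the ending vertex of exactly $\ex^-(v)$ paths in $\mathcal{P}$, and $D':=T\setminus E(\mathcal{P})$ is Eulerian, meaning $d^+_{D'}(v)=d^-_{D'}(v)$ at every vertex. Granted such $\mathcal{P}$, the digraph $D'$ decomposes into edge-disjoint closed directed walks; provided each such walk meets some $P_i$ at a common vertex it can be spliced into that $P_i$ without increasing the path count. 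Since the endpoints of $\mathcal{P}$ exhaust $S^+\cup S^-=V(T)$, the incidence condition is automatic and this gluing step produces a path decomposition of $T$ with exactly $\ex(T)$ paths.

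The substantive work is constructing $\mathcal{P}$, and here I would split by how far $T$ is from regular. For near-regular $T$, I would invoke the robust outexpansion underlying the Kühn--Osthus framework: take an approximate Hamilton decomposition of a regular sub-multidigraph, break each Hamilton cycle at one vertex of positive excess to create a path ending at a vertex of negative excess, and use randomised local switchings to correct the precise endpoint counts. For highly unbalanced $T$, such as near-transitive tournaments where the excess is concentrated on a few vertices, I would instead iteratively peel off maximal paths starting at the largest-excess vertex, checking that enough connectivity (for instance, a strongly connected core) survives to continue; a separate, more careful argument is needed to keep the remainder Eulerian after each peeling step. An absorber reserved at the start of the argument can be used to finish off a small leftover without using extra paths.

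The hardest point I expect is obtaining \emph{exactly} $\ex(T)$ paths rather than $\ex(T)+o(n)$. Absorption and approximate-decomposition techniques typically incur lower-order slack, yet $\ex(T)$ can itself be as small as $n/2$ when $T$ is almost regular, so any additive slack is fatal. Closing this gap uniformly across all even tournaments is the main obstacle; I expect a complete proof to split into several regimes (near-regular, moderately unbalanced, near-transitive) and to require a bespoke absorbing gadget whose capacity is controlled by the local excess profile rather than a global edge budget. The transition between regimes, where neither robust expansion nor transitivity is clean, is the step I would most expect to resist routine arguments.
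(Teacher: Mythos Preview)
The statement you are attempting to prove is a \emph{conjecture}, and the paper does not prove it. The paper's main contribution is Theorem~\ref{thm:LPST2}, which establishes the conjecture only for large tournaments with $\ex(T)>Cn$; the range $n/2<\ex(T)\le Cn$ is explicitly left open in Section~\ref{se:conc}. So there is no ``paper's own proof'' to compare against, and your proposal is an outline for attacking an open problem rather than a proof.

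Your sketch correctly identifies the overall architecture the paper uses in the high-excess regime (reduce to an Eulerian remainder, then absorb cycles into a reserved family of paths), and you are right that the exact count is the crux. But several steps are genuinely incomplete even as a plan. First, the splicing step is not automatic: inserting a closed walk into a path $P_i$ at a shared vertex yields a \emph{walk}, not a path, unless the closed walk is internally vertex-disjoint from $P_i$; controlling this is precisely why the paper builds its absorbers with care (Lemmas~\ref{lma:EulerDecomp(new)} and~\ref{lma:shortpathseverywhere(new)}) rather than splicing arbitrary Euler tours. Second, your near-regular regime appeals to ``the Kühn--Osthus framework'' and ``randomised local switchings'' without saying how to hit the endpoint multiset exactly; the paper needs all of Section~\ref{se:finaldecomp} (three separate reductions culminating in Theorem~\ref{thm:KellyPaths}) to do this even when $\ex(T)>Cn$, and still cannot close the gap down to $n/2$. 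Third, the ``iteratively peel off maximal paths'' idea for highly unbalanced $T$ has no mechanism to keep the remainder consistent or to guarantee termination at exactly $\ex(T)$ paths. In short, you have accurately located the difficulties, but the proposal does not resolve them, and indeed the full conjecture remains open.
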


It is almost immediate to see that this conjecture is a considerable generalisation of Kelly's conjecture stated below. We give the easy argument after Theorem~\ref{thm:KO}.

\begin{conjecture} [Kelly; see e.g.\ \cite{BG}]
\label{conj:Kelly}
The edge set of every regular tournament can be decomposed into Hamilton cycles.
\end{conjecture}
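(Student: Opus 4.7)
The plan is to follow the strategy of Kühn and Osthus that resolves this conjecture for all sufficiently large $n$; the remaining small cases would need separate treatment, potentially via computer search. The central concept is that of a \emph{robust outexpander}: a digraph $D$ on $n$ vertices in which, for some small constants $0 < \nu \ll \tau \ll 1$ and every vertex set $S$ with $\tau n \le |S| \le (1-\tau)n$, the set $\{v \in V(D) : |N^-(v) \cap S| \ge \nu n\}$ has size at least $|S| + \tau n$. The crucial black-box result is the Hamilton decomposition theorem for regular robust outexpanders, which states that any such digraph (with $n$ large) decomposes into Hamilton cycles.

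The first step is to verify that every regular tournament $T$ on $n$ vertices is a robust outexpander. Since each vertex of $T$ has in- and out-degree exactly $(n-1)/2$, a direct double-counting argument shows that any $S$ of size in the relevant range sends its out-edges so ``diffusely'' that at least $|S|+\tau n$ vertices each receive $\nu n$ or more edges from $S$, for appropriately chosen $\nu \ll \tau$. Once this structural property is in hand, one invokes the Hamilton decomposition theorem. Its proof itself proceeds in two phases: first, construct an \emph{approximate} Hamilton decomposition covering all but an $o(n^2)$ fraction of the edges, using iterative rotation-extension arguments that exploit robust expansion; then \emph{absorb} the leftover edges by rerouting the approximate Hamilton cycles through a preselected absorbing sub-digraph so that every almost-Hamilton cycle becomes a genuine Hamilton cycle using up all the leftover.

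The hard part is the absorbing step. Before performing the approximate decomposition, one must reserve inside $T$ (randomly) an absorbing structure with the robust property that, no matter which $o(n^2)$ edges remain afterwards, a family of vertex-disjoint absorbing operations can be found that splices every leftover edge into some near-Hamilton cycle without destroying its Hamilton-ness. Balancing in- and out-degrees throughout the process, controlling the quasi-random distribution of the residual digraph via the directed regularity lemma, and managing the dependencies between the reserved absorbing edges, the approximate decomposition, and the leftover simultaneously, is the principal technical challenge; it requires a careful combination of the regularity method with probabilistic edge reservations. For the small-order cases of the conjecture not covered by these asymptotic methods, the robust expander framework is unavailable and one would need to exploit specific combinatorial structure (for instance that of doubly-regular tournaments) or resort to exhaustive case analysis.
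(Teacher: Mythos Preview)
The statement you were asked to prove is Conjecture~\ref{conj:Kelly} (Kelly's conjecture), which the paper does \emph{not} prove: it is explicitly labelled a conjecture, and the paper only cites K\"uhn and Osthus's result (Theorem~\ref{thm:KO}) establishing it for sufficiently large tournaments. There is therefore no ``paper's own proof'' to compare against.

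Your proposal is a reasonable high-level sketch of the K\"uhn--Osthus robust-expander strategy, and you correctly acknowledge that it only handles large $n$. But this means your proposal is not a proof of the conjecture as stated: the small cases are genuinely open, and ``computer search'' is not currently a viable route (the number of regular tournaments grows far too quickly, and no one has done this). So your write-up amounts to a summary of Theorem~\ref{thm:KO} rather than a proof of Conjecture~\ref{conj:Kelly}. A minor technical point: in your definition of robust outexpander the expansion should be by $\nu n$, not $\tau n$; and your description of the absorbing mechanism in \cite{KO} is somewhat loose (the actual argument does not use the regularity lemma in the way you suggest, and the ``absorbing'' step is more accurately described as building a robustly decomposable digraph in advance rather than splicing leftover edges post hoc).
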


Almost 50 years after it was stated, K{\"u}hn and Osthus~\cite{KO} finally proved Kelly's conjecture for large tournaments using their powerful robust expanders technique, which was subsequently used to prove several other conjectures on edge decompositions of (directed) graphs \cite{KO2,CKLOT}.

\begin{theorem}[K{\"u}hn, Osthus~\cite{KO}]
\label{thm:KO}
Every sufficiently large regular tournament has a Hamilton decomposition.
\end{theorem}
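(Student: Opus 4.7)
The plan is to follow the robust expanders framework, combining an absorbing structure with an approximate Hamilton decomposition. The starting observation is that a regular tournament $T$ on $n$ vertices has $d^+(v) = d^-(v) = (n-1)/2$ at every vertex, so $T$ is very dense and highly connected. The goal is to produce Hamilton cycles that exhaust $E(T)$ exactly, not just approximately, and the standard route is: set aside a small ``absorbing'' subdigraph, decompose almost everything else into Hamilton cycles, then swallow the tiny leftover using the absorber.

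First I would check that $T$ is a robust outexpander: there exist constants $0<\nu\ll\tau\ll 1$ such that for every $S\subseteq V(T)$ with $\tau n \le |S|\le (1-\tau)n$, the $\nu$-robust outneighbourhood of $S$ has size at least $|S|+\nu n$. For a regular tournament this follows from a short counting argument on edges between $S$ and $V\setminus S$ using $d^+(v)=(n-1)/2$. Having this, I would then locate inside $T$ a sparse spanning subdigraph $R$ that is \emph{robustly decomposable} in the sense of Kühn--Osthus: for any very sparse regular digraph $H$ on $V(T)$ that is edge-disjoint from $R$, the union $R\cup H$ has a Hamilton decomposition. This $R$ is built by combining a small number of gadgets: a chord absorber that can swallow arbitrary short paths into Hamilton cycles via rotations, together with a parity/balance fixer that corrects any residual degree discrepancies. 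Constructing $R$ so that it is simultaneously small and sufficiently flexible is the crux of the argument.

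Once $R$ is in place, $T':=T\setminus R$ is still an almost-regular robust outexpander, and I would apply an approximate Hamilton decomposition result for robust outexpanders (obtained by iteratively extracting Hamilton cycles using rotation–extension arguments inside the expanding structure) to decompose all but an arbitrarily small proportion of $E(T')$ into Hamilton cycles. Let $H$ be the leftover edge set; by regularising slightly (e.g.\ pulling an extra Hamilton cycle or two out to balance in- and out-degrees) I can ensure that $H$ is a very sparse regular digraph. Then the defining property of $R$ turns $R\cup H$ into Hamilton cycles, and together with the cycles extracted from $T'$ this yields a full Hamilton decomposition of $T$.

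The main obstacle is the construction and analysis of the robustly decomposable $R$: one has to design absorbing gadgets that can handle \emph{any} sparse regular remainder $H$, show that enough disjoint copies of such gadgets can be embedded into $T$ using the robust expansion, and then verify that the gadgets can genuinely be recombined into Hamilton cycles. Everything else (robust expansion of $T$, approximate decomposition, and the final leftover balancing) is comparatively mechanical once the robust decomposition lemma is available.
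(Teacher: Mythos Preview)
Your outline is a faithful sketch of the K\"uhn--Osthus proof strategy, but note that in this paper Theorem~\ref{thm:KO} is not proved at all: it is quoted as a known result from~\cite{KO} and used as a black box (more precisely, the paper uses the stronger Theorem~\ref{thm:kelly} for regular robust outexpanders, also cited from~\cite{KO}). So there is nothing to compare against here; the paper's ``proof'' is simply a citation.
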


To see that Conjecture~\ref{conj:Pull} implies Conjecture~\ref{conj:Kelly}, take any regular $(n+1)$-vertex tournament $T$ and any $v \in V(T)$, and note that $\ex(T-v) = n/2$. If Conjecture~\ref{conj:Pull} holds, then $T-v$ can be decomposed into $n/2$ paths, so they must be Hamilton paths. Adding $v$ back to $T-v$, it is easy to see that each path can be completed to a Hamilton cycle, giving a Hamilton decomposition of $T$. The converse is also easy to see. Thus the special case of Conjecture~\ref{conj:Pull} in which  $\ex(T) = n/2$ is equivalent to Kelly's Conjecture. In general, however, $\ex(T)$ can take a large range of values as the proposition below shows.

\begin{proposition}
If $T$ is an $n$-vertex tournament with $n$ even, then $n/2 \leq \ex(T) \leq n^2/4$. Furthermore each value in the range occurs.
\end{proposition}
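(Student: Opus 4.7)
My plan is to establish the two bounds by direct counting and then realize every integer value in the interval via explicit score sequences.

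The lower bound follows from parity: since $n$ is even, $d^+(v) + d^-(v) = n-1$ is odd at every $v$, so $\ex(v)$ is odd and $|\ex(v)| \geq 1$, giving $\ex(T) = \tfrac12 \sum_v |\ex(v)| \geq n/2$. For the upper bound, let $S = \{v : \ex(v) > 0\}$ with $|S| = k$. Edges inside $S$ cancel in $\sum_{v \in S}\ex(v)$ (each contributes $+1$ at its tail and $-1$ at its head), so only edges between $S$ and $V \setminus S$ contribute:
\[
\ex(T) = \sum_{v \in S}\ex(v) = e(S,\, V \setminus S) - e(V \setminus S,\, S) \leq k(n-k) \leq n^2/4.
\]

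For the range statement, I would work at the level of score sequences and appeal to Landau's theorem: a nondecreasing integer sequence $(s_1, \ldots, s_n)$ in $[0, n-1]$ is a tournament score sequence iff $\sum_{i=1}^k s_i \geq \binom{k}{2}$ for all $k$, with equality at $k = n$. The near-regular sequence with $n/2$ entries equal to $(n-2)/2$ and $n/2$ entries equal to $n/2$ is Landau-feasible and yields $\ex(T) = n/2$, while the transitive sequence $(0, 1, \ldots, n-1)$ yields $\ex(T) = n^2/4$. To interpolate, I would build a chain of Landau-feasible sequences $\sigma_{n/2}, \sigma_{n/2+1}, \ldots, \sigma_{n^2/4}$ in which consecutive pairs differ by a single unit swap $(s_i, s_j) \mapsto (s_i+1, s_j-1)$. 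A short case analysis of $\Delta\ex(T) \in \{-2, -1, 0, +1, +2\}$ reveals that such a swap raises $\ex(T)$ by exactly one whenever it is applied at a pair $(i, j)$ with $\ex(i) = -1$ and $\ex(j) \leq -1$ (so the sign of $\ex(i)$ flips while $|\ex(j)|$ grows by $2$), or the symmetric case with $\ex(j) = +1$ and $\ex(i) \geq +1$.

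The main technical obstacle is confirming that such a swap is always available along the chain and that Landau's inequalities are preserved by each step. Both amount to bookkeeping on the multiset of current excesses, with care required when the sequence momentarily contains no vertex of excess $\pm 1$; one checks that such ``rigid'' sequences are sparse and can be bypassed by routing the chain through neighbouring Landau-feasible sequences that do contain vertices of excess $\pm 1$.
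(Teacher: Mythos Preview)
Your proofs of the two inequalities are correct and complete: the parity argument for $\ex(T)\ge n/2$ and the cut computation $\ex(T)=\sum_{v\in S}\ex(v)=e(S,\bar S)-e(\bar S,S)\le k(n-k)\le n^2/4$ are both clean. It is worth noting that the paper does not actually supply a proof of this proposition: the one sentence that follows it only records that the lower and upper bounds are \emph{attained} (by near-regular and transitive tournaments respectively) and says nothing about either inequality or about the intermediate values. So on the inequalities you have already written more than the paper does.

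For the range claim, however, your argument has a genuine gap. You have correctly identified which unit score-swaps $(s_i,s_j)\mapsto(s_i+1,s_j-1)$ change $\ex(T)$ by exactly $+1$, but you have not shown that such a swap is simultaneously available and Landau-compatible at each step of the chain. The sentence about ``rigid'' sequences being ``sparse'' and ``bypassable'' is precisely where the proof would live, and it is not there. Concretely, each of your two swap types requires a vertex of current excess exactly $-1$ (respectively $+1$); while the swap you perform does \emph{create} such a vertex, you also need one to \emph{initiate} the next swap, and there are Landau-feasible sequences with no vertex of excess $\pm 1$ at all (e.g.\ $((n-4)/2)^{n/2},((n+2)/2)^{n/2}$), so routing the chain to avoid them is real work, not just bookkeeping. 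Since the paper also leaves this unproved, you are not behind it---but if you want a self-contained argument, it is simpler to exhibit an explicit Landau-feasible score sequence for each target $m$. For instance, the sequences
\[
\bigl(\tfrac{n-2-2k}{2},\ (\tfrac{n-2}{2})^{\,n/2-1-k},\ (\tfrac{n}{2})^{\,n/2+k}\bigr),\qquad 0\le k\le \tfrac{n}{2}-1,
\]
already realise every $m$ from $n/2$ up to $n-1$, and one can continue in the same explicit spirit rather than arguing about abstract chains.
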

As we saw, the lower bound occurs for any almost-regular tournament and it is easy to verify that the upper bound occurs for the transitive tournament (in fact it occurs for any tournament with a vertex partition into two equal size parts $A$ and $B$ where all edges are directed from $A$ to $B$). Alspach and Pullman~\cite{AP} showed that for any tournament $T$, $\pn(T) \leq n^2/4$ thus verifying Conjecture~\ref{conj:Pull} for the special case $ex(T) = n^2/4$  (and this was generalised to digraphs \cite{Brien}). Thus the conjecture has been solved for the two extreme values of excess, namely $n/2$ and $n^2/4$: for every other value of $\ex(T)$ between $n/2$ and $n^2/4$ the conjecture remains open. Our main contribution is to solve many more cases of the conjecture.

\begin{theorem}
\label{thm:LPST2}
There exists $C>0$ and $n_0 \in \mathbb{N}$ such that if $T$ is an $n$-vertex tournament with $n \geq n_0$ even and $\ex(T) > Cn$ then $\pn(T) = \ex(T)$.
\end{theorem}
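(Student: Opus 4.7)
The lower bound $\pn(T) \geq \ex(T)$ holds by definition, so only the matching upper bound requires work. I would reformulate the problem of producing an $\ex(T)$-path decomposition as the following local assignment: at every vertex $v$, choose $\ex^+(v)$ out-edges to be ``starts'', choose $\ex^-(v)$ in-edges to be ``ends'', and choose a perfect matching between the remaining $\min(d^+(v),d^-(v))$ out-edges and in-edges to serve as a ``transition function'' at $v$. This data automatically decomposes $E(T)$ into a family of edge-disjoint walks, and the task reduces to arranging the local choices so that every resulting walk is a genuine path rather than a closed walk or a walk that revisits vertices.

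\medskip

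To prepare for this, I would split the vertex set into a high-excess part $H = \{v : |\ex(v)| \geq c_1 n\}$ and a low-excess complement $L = V(T)\setminus H$. The hypothesis $\ex(T) > Cn$ guarantees substantial total excess, but individual vertices can still have excess as small as $0$, so the vertices of $L$ require the most care: at such a vertex almost every incident edge must be assigned by the transition function, and the freedom in designating starts and ends is very limited.

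\medskip

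My strategy would follow the absorption-plus-robust-expansion paradigm of K\"uhn--Osthus~\cite{KO}, in four steps. Step~(i): reserve a small spanning absorbing subdigraph $A\subseteq T$ with strong robust expansion, together with a reserve of flexible short paths that can later be inserted or lengthened. Step~(ii): in $T-A$, run a random/greedy argument to construct a nearly complete collection of paths realising almost the required numbers of starts and ends at the vertices of $H$; the density of $T-A$ (hence its robust expansion) is what allows us to build long paths with prescribed endpoints. Step~(iii): route carefully through $L$, so that each low-excess vertex is traversed the correct number of times and starts or ends only the small number of paths its excess allows. Step~(iv): use the absorber $A$ to incorporate all remaining edges into existing paths without increasing the path count, by splicing short cycles from the residual Eulerian digraph into paths that meet them at a single common vertex.

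\medskip

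The main obstacle I expect is step~(iv), coupled with the exact count constraints at vertices of $L$. Splicing a cycle into a path yields a new path only if the cycle is internally vertex-disjoint from that path, so the absorbing structure has to be engineered in advance to guarantee that such disjoint splicings are always available. Achieving \emph{exact} equality $\pn(T)=\ex(T)$, rather than $\ex(T)+O(1)$, requires tight control of the start/end balance at every vertex; it is precisely here that the hypothesis $\ex(T) > Cn$ is essential, since it provides the budget of path endpoints needed to make the absorber large enough to handle every exceptional local configuration that can occur.
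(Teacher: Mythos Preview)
Your outline has the right instincts --- separate high-excess vertices, invoke robust expansion, absorb leftover edges by splicing cycles into paths --- but it is a sketch rather than a proof, and it misses the structural move that makes the argument close. The paper bifurcates into two regimes. For $\ex(T) \geq n^{2-\eps}$ it builds an \emph{explicit} absorber (Lemma~\ref{lma:shortpathseverywhere(new)}): for every vertex $v$, roughly $n^{2/3}$ short edge-disjoint paths from the high-positive-excess set to $v$, and as many from $v$ to the high-negative-excess set, all reserved in advance. These are precisely what make your step~(iv) executable --- each segment of a residual cycle is prefixed and suffixed by one of these reserved paths to become a genuine path with correct endpoints. Your proposal asserts such an absorber should exist but gives no construction, and the internal vertex-disjointness needed for splicing (which you flag as the main obstacle) is exactly the delicate point; the paper handles it via a Menger-type argument (Lemma~\ref{lma:shortpathsatv(new)}) that relies essentially on the excess being polynomially large.

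More seriously, for $Cn < \ex(T) < n^{2-\eps}$ that absorber is unavailable --- there are too few high-excess vertices to feed it --- and the paper takes a route you do not anticipate. Rather than absorbing into a generic robustly expanding subdigraph, it reduces in three steps to an almost-regular oriented graph with $\ex(v) \in \{-1,0,1\}$ and $|U^+|=|U^-|$, to which the K\"uhn--Osthus Hamilton decomposition theorem applies as a black box (Theorem~\ref{thm:KellyPaths}). The steps are: delete the small high-excess vertex set via a tailored partial decomposition (Lemma~\ref{lma:W}); then --- crucially --- \emph{equalise} $|U^+|$ and $|U^-|$ by flipping the sign of the excess at selected vertices (Theorem~\ref{thm:DiscBalance}); finally, trim the degree sequence exactly. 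Your outline has no analogue of the equalisation step, and without it one cannot land on a digraph of the required shape; nor does your transition-function formulation suggest how to achieve it. Each of these steps consumes $\Theta(n)$ paths from the excess budget, which is where the hypothesis $\ex(T) > Cn$ is genuinely spent --- not, as you suggest, merely in sizing an absorber.
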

We make no attempt to optimise or even compute the value of $C$ but we note it is not a Regularity-type constant. 
We prove this theorem in two steps. We will first prove the  following weakening of the Theorem~\ref{thm:LPST2}.
\begin{theorem}
\label{thm:LPST1}
There exists $\eps > 0$ (we can take $\eps = 1/18$) and $n_0 \in \mathbb{N}$ such that
if $T$ is a tournament on $n>n_0$ vertices with $n$ even and $\ex(T) \geq n^{2 - \eps}$, then $\pn(T) = \ex(T)$.  
\end{theorem}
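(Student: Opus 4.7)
The inequality $\pn(T) \geq \ex(T)$ is the trivial lower bound already noted in the introduction, so the task is to construct an explicit path decomposition of $T$ with exactly $\ex(T)$ paths. My plan is a standard absorption argument: first assemble a family $\P$ of $\ex(T)$ edge-disjoint paths whose endpoint multiplicities match the excess requirement (each $v$ is the start of $\ex^+(v)$ of them and the end of $\ex^-(v)$), and then fold the leftover edges $E(T) \setminus E(\P)$ into these paths. Because $\P$ absorbs exactly the excess at every vertex, the residual digraph $T' := T - E(\P)$ has zero excess at each vertex, is therefore Eulerian, and decomposes into edge-disjoint directed cycles.

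Assuming the paths of $\P$ collectively cover $V(T)$, the splicing step is clean: for each cycle $C$ in the cycle decomposition of $T'$, pick any path $P \in \P$ that shares a vertex $v$ with $C$ and replace $P$ by the path that traverses $P$ up to $v$, goes once around $C$, and then continues along $P$. The resulting collection still has $\ex(T)$ paths, now uses every edge of $T$, and therefore witnesses $\pn(T) \leq \ex(T)$.

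To construct $\P$, I would think of the excess as a multiset $\mathcal{D}$ of $\ex(T)$ ordered pairs $(u, w)$, each corresponding to one demanded $u \to w$ path. Since $\ex(T) \geq n^{2-\eps}$, the average path length allowed is $O(n^\eps)$, so the paths can be made short. The pairing is chosen so that every vertex of $T$ lies on at least one path (easy, as $\ex(T) \gg n$), and the paths are realised iteratively: at each step find a short $u \to w$ path in the remaining edges of $T$ via a greedy, Menger-type, or robust-expansion argument, delete its edges, and continue. The very high excess regime is crucial here because most vertices have either $d^+$ or $d^-$ close to $n-1$, which keeps short directed paths between designated endpoints plentiful even after many edges have been removed.

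The hard part will be step two: producing an edge-disjoint system of $\ex(T)$ prescribed-endpoint paths that simultaneously spans $V(T)$ and leaves a residual suitable for splicing, with the endpoint constraints rigidly fixed by the excess profile. I expect the argument to split $V(T)$ according to the sign and magnitude of $\ex(v)$, to handle vertices of very high excess by reserving a large fraction of their out- or in-edges for single-edge or very short paths, and to treat the remaining near-balanced vertices using a robust-expansion or greedy-routing argument that tolerates gradual depletion of the residual. The exponent $\eps = 1/18$ presumably emerges from balancing the edge budget spent on the skeleton (and on any reservoir needed for routing) against the minimum residual out- and in-degrees required to keep the iterative construction feasible throughout.
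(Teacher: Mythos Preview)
Your high-level plan is right in spirit (build an excess-matching skeleton, leave an Eulerian residual, absorb its cycles), but the splicing step as you describe it does not work. When you ``go once around $C$ and then continue along $P$'' you obtain a \emph{walk}, not a path: whenever $C$ and $P$ share any vertex other than $v$, that vertex is visited twice. Cycles in the residual can be long (up to Hamiltonian), so this collision is the generic situation, not an edge case. Moreover, the number of cycles you need to absorb can be as large as $\Theta(n^{4/3}\log n)$, so even if a single splice were safe you would have to perform many splices, potentially into the same path, and the vertex collisions compound.

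The paper's proof repairs exactly this point, and this is where the exponent $1/18$ comes from. Instead of a single skeleton, one builds for \emph{every} vertex $v$ a reservoir of $\ell=n^{2/3+\gamma}$ short in-paths $P^v_j$ (from the high-positive-excess set to $v$) and $\ell$ short out-paths $Q^v_j$ (from $v$ to the high-negative-excess set), all of length at most $4n^{1/9}$, with the $P^v_j$ pairwise vertex-disjoint except at $v$ and likewise for the $Q^v_j$. The Eulerian residual is then decomposed not into whole cycles but into $O(n^{4/3}\log n)$ \emph{short arcs} of length at most $n^{2/3}$, each arc delimited by representative vertices chosen so that no vertex is a representative too often. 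An arc from $x$ to $y$ is absorbed by prepending one of the many $P^x_j$ and appending one of the $Q^y_j$; because the arc is short and the reservoir at $x$ (resp.\ $y$) consists of many vertex-disjoint paths, one can always find a $P^x_j$ (resp.\ $Q^y_j$) that avoids the arc, so the concatenation really is a simple path. A final Hall-type matching pairs the extended arcs with the remaining $Q$-paths. The parameters $n^{2/3}$, $n^{1/9}$, $n^{8/9}$ are tuned so that the reservoir is large enough to survive all the choices, and this balance is what forces $\eps=1/18$.

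So the missing idea is: you cannot absorb a long cycle in one piece; you must chop it into short arcs and maintain, at every potential cut point, a supply of many internally vertex-disjoint short absorbers. Building that supply (Lemmas~\ref{lma:shortpathsatv(new)} and~\ref{lma:shortpathseverywhere(new)}) is the real content of the proof.
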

The proof of this result is short and self-contained, relying on a novel application of the absorption technique due to R{\"o}dl, Ruci{\'n}ski, and Szemer{\'e}di~\cite{RRS} (with special forms appearing in earlier work e.g.\ \cite{Kriv}). 

In the next step we consider tournaments of excess smaller than $n^{2 - \eps}$ but bigger than $Cn$. Such tournaments are almost regular and are therefore amenable to the techniques used by K{\"u}hn and Osthus~\cite{KO}. For tournaments of small excess, we will ultimately reduce the problem of showing that $\pn(T) = \ex(T)$ to the problem of showing that a regular oriented graph $D$ of very high degree has an edge decomposition into Hamilton cycles; such a Hamilton decomposition of $D$ is known to exist by the main result  from~\cite{KO}. 

\subsection{Outline}
In the next section, we give the basic notation we will use as well as some preliminary results needed in Section~\ref{se:highdisc}. In Section~\ref{se:highdisc} we give the short proof of Theorem~\ref{thm:LPST1}, which requires only Hall's Theorem and Menger's Theorem. 
In Section~\ref{se:fprelims} we give further preliminaries needed for the remaining sections; in particular we state the results related to robust expansion that we will need. At the end of Section~\ref{se:fprelims} we give an overview of the arguments in Section~\ref{se:W} and Section~\ref{se:finaldecomp} that allow us to extend Theorem~\ref{thm:LPST1} to Theorem~\ref{thm:LPST2}.
Section~\ref{se:W} contains a preliminary result, Lemma~\ref{lma:W}, that helps us to deal with certain problematic vertices that we encounter in Section~\ref{se:finaldecomp}.
In Section~\ref{se:finaldecomp}, we prove Theorem~\ref{thm:LPST2} in a three-step reduction via Theorem~\ref{thm:RemHighDisc}, Theorem~\ref{thm:DiscBalance}, and Theorem~\ref{thm:FinalDecomp}.

\section{Notation and preliminaries}
\label{se:prelim}

\subsection{Notation}

In this paper a digraph refers to a directed graph without loops where we allow up to two edges between any pair $x$, $y$ of distinct vertices, at most one in each direction. Occasionally we work with directed multigraphs which again have no loops, but where we permit more than two directed edges between any pair of distinct vertices. An oriented graph is a directed graph where we permit only one edge between any pair of distinct vertices.
  Given a digraph $D$, we write $V(D)$ for its vertex set and $E(D)$ for its edge set. We write $xy$ for an edge directed from $x$ to $y$.

We write $H \subseteq D$ to mean that $H$ is a subdigraph of $D$, i.e.\ $V(H) \subseteq V(D)$ and $E(H) \subseteq E(D)$.
Given $X\subseteq V(D)$, we write $D-X$ for the digraph obtained from $D$ by deleting all vertices in~$X$,
and $D[X]$ for the subdigraph of $D$ induced by~$X$.
Given $F\subseteq E(D)$, we write $D- F$ for the digraph obtained from $D$ by deleting all edges in~$F$. If $H$ is a subdigraph of $D$, we write $D- H$ for $D- E(H)$. For two subdigraphs $H_1$ and $H_2$ of $D$, we write $H_1 \cup H_2$ for the subdigraph with vertex set $V(H_1) \cup V(H_2)$ and edge set $E(H_1) \cup E(H_2)$. For a set of edges $F \subseteq E(D)$, we sometimes write $V(F)$ to denote the set of vertices incident to some edge in $F$.

If $x$ is a vertex of a digraph $D$, then $N^+_D(x)$ denotes the \emph{out-neighbourhood} of $x$, i.e.~the
set of all those vertices $y$ for which $xy\in E(D)$. Similarly, $N^-_D(x)$ denotes the \emph{in-neighbourhood} of $x$, i.e.~the
set of all those vertices $y$ for which $yx\in E(D)$.
For $S \subseteq V(D)$, we write $N^+_D(x, S)$ for all those vertices $y \in S$ such that $xy \in E(D)$ and correspondingly for $N^-_D(x,S)$. 
We write $d^+_D(x):=|N^+_D(x)|$ for the \emph{outdegree} of $x$ and $d^-_D(x):=|N^-_D(x)|$ for its \emph{indegree}. 
Similarly we write $d^{\pm}_D(x, S):=|N^{\pm}_D(x, S)|$.
We denote the \emph{minimum outdegree} of $D$ by $\delta^+(D):=\min \{d^+_D(x): x\in V(D)\}$
and the \emph{minimum indegree} $\delta^-(D):=\min \{d^-_D(x): x\in V(D)\}$.
The \emph{minimum semi-degree} of $D$ is $\delta^0(D):=\min\{\delta^+(D), \delta^-(D)\}$ and the minimum degree is $\delta(D) := \min\{d^+(x)+ d^-(x): x \in V(D)\}$. We use $\Delta^{\pm}(D)$, $\Delta^0(D)$ and $\Delta(D)$ for the corresponding maximum degrees.

Whenever $X,Y\subseteq V(D)$ are disjoint, we write $E_D(X)$ for the set of edges of $D$ having both endvertices in~$X$, and $E_D(X,Y)$ for the set of edges of $D$ that start in $X$ and end in $Y$. 

Unless stated otherwise, when we refer to paths and cycles in digraphs, we mean
directed paths and cycles, i.e.~the edges on these paths and cycles are oriented consistently. We write $P = x_1x_2 \cdots x_t$ to indicate that $P$ is a path with edges $x_1x_2, x_2x_3, \ldots, x_{t-1}x_t$, where $x_1, \ldots, x_t$ are distinct vertices. We occasionally denote such a path $P$ by $x_1Px_t$ to indicate that it starts at $x_1$ and ends at $x_t$. 
For two paths $P=a \cdots b$ and $Q = b \cdots c$, we write $aPbQc$ for the concatenation of the paths $P$ and $Q$ and this notation generalises to cycles in the obvious ways. In particular for a cycle $C$ and vertices $a,b$ on the cycle, $aCb$ denotes the paths from $a$ to $b$ along the cycle.
We often use calligraphic letters, e.g.\ $\mathcal{P}$ for a set of paths $\mathcal{P} = \{P_1, \ldots, P_r \}$. In that case $\cup \mathcal{P}$ refers to the digraph that is the union of the paths and $V(\mathcal{P})$ and $E(\mathcal{P})$ refer to the vertex and edge set of the union.

  For a set $X$ and $U \subseteq X$, we will write $I_U: X \rightarrow \{0,1\}$ for the indicator function of~$U$.

For $x, y \in (0,1]$, we often use the notation $x \ll y$ to mean that $x$ is sufficiently small as a function of $y$ i.e.\ $x \leq f(y)$ for some implicitly given non-decreasing function $f:(0,1] \rightarrow (0,1]$. 

Throughout, we omit floors and ceilings and treat large numbers as integers whenever this does not affect the argument.

\subsection{Basic graph theory}

We will very occasionally work with undirected graphs for which we use standard notation similar to that used for directed graphs; see e.g.\ \cite{Diestel}.

\begin{theorem}[variant of Hall's Theorem]
\label{thm:Hall}
Suppose $G$ is a bipartite graph with vertex classes $A$ and $B$ and $k \in \mathbb{N}$. If $k|N_G(X)| \geq |X|$ for every $X \subseteq A$, then each $a \in A$ can be matched with some $b \in B$ such that each $b \in B$ is matched with at most $k$ elements of $A$, i.e.\ there exists a subgraph $G' \subseteq G$ in which every vertex in $A$ has degree $1$ and every vertex in $B$ has degree at most $k$.
\end{theorem}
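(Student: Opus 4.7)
The plan is to reduce this statement to the standard version of Hall's Theorem by a simple vertex-blowup trick. First I would construct an auxiliary bipartite graph $G^\ast$ with vertex classes $A$ and $B^\ast$, where $B^\ast$ is obtained from $B$ by replacing each vertex $b \in B$ with $k$ copies $b_1, \ldots, b_k$; every copy $b_i$ inherits the neighbourhood of $b$ in $G$, so that $a b_i \in E(G^\ast)$ if and only if $ab \in E(G)$.

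Next I would verify that $G^\ast$ satisfies the usual Hall condition with respect to $A$. For any $X \subseteq A$, the neighbourhood of $X$ in $G^\ast$ is exactly the set of $k$ copies of each vertex in $N_G(X)$, so
\[
|N_{G^\ast}(X)| = k|N_G(X)| \geq |X|
\]
by the hypothesis. Hall's Theorem then yields a matching $M$ in $G^\ast$ saturating $A$.

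Finally, I would project $M$ back to $G$: interpreting the edge $ab_i$ of $M$ as the edge $ab$ of $G$ gives a subgraph $G' \subseteq G$ in which each $a \in A$ has degree exactly $1$ (since $M$ saturates $A$ and is a matching) and each $b \in B$ has degree at most $k$ (since at most its $k$ copies in $B^\ast$ can be used by the matching $M$). This $G'$ is the required subgraph.

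There is essentially no obstacle here: the only minor sanity check is to ensure that different edges of $M$ incident to copies of the same $b$ project to distinct edges of $G$, which is automatic because they use distinct vertices of $A$. The argument is completely routine and the entire proof can be written in a few lines.
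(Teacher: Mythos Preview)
Your proof is correct and is the standard reduction of this variant to ordinary Hall's Theorem via $k$-fold blowup of $B$. The paper itself does not supply a proof of this statement; it is quoted without proof as a well-known preliminary, so there is no approach in the paper to compare against. Your argument is exactly the intended one-line justification.
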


\begin{corollary}
\label{cor:highdegmatch}
Suppose $G$ is a bipartite graph with vertex classes $A$ and $B$ both of size $n$ and suppose $\delta(G) \geq n/2$. Then $G$ has a perfect matching.
\end{corollary}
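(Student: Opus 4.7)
The plan is to apply the variant of Hall's Theorem (Theorem~\ref{thm:Hall}) with $k=1$: a subgraph $G' \subseteq G$ in which every $a \in A$ has degree $1$ and every $b \in B$ has degree at most $1$ is, since $|A|=|B|=n$, automatically a perfect matching. So it suffices to verify Hall's condition $|N_G(X)| \geq |X|$ for every $X \subseteq A$.

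I would split into two cases according to the size of $X$. If $|X| \leq n/2$, then picking any single vertex $x \in X$ gives $|N_G(X)| \geq |N_G(x)| \geq \delta(G) \geq n/2 \geq |X|$, and we are done. If instead $|X| > n/2$, I claim $N_G(X) = B$, which gives $|N_G(X)| = n \geq |X|$. Indeed, if some $b \in B \setminus N_G(X)$ existed, then $b$ would have no neighbour in $X$, forcing $N_G(b) \subseteq A \setminus X$ and hence $n/2 \leq \delta(G) \leq |N_G(b)| \leq |A \setminus X| = n - |X| < n/2$, a contradiction.

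There is no real obstacle here; the only minor point to be careful about is to ensure the two cases together cover all $X \subseteq A$ (including $X = \emptyset$, which is trivial) and to remember that $\delta(G)$ refers to the minimum degree over both sides, so the bound $|N_G(b)| \geq n/2$ used in the second case is justified. With Hall's condition verified, the variant of Hall's Theorem produces the desired perfect matching.
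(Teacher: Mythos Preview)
Your proof is correct and follows exactly the approach the paper intends: the corollary is stated without proof immediately after the variant of Hall's Theorem, so verifying Hall's condition via the standard two-case split on $|X|$ is precisely what is expected.
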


For a directed graph $D$ and $A, B \subseteq V(D)$, an \emph{$A,B$-path} of $D$ is a path of $D$ that starts in $A$ and ends in $B$. An \emph{$A,B$-separator} of $D$ is a vertex subset $S \subseteq V(D)$ such that there are no $A,B$-paths in $D - S$.
\begin{theorem}[Menger's Theorem]
Suppose $D$ is a directed graph and $A, B \subseteq V(D)$. If the smallest $A,B$-separator in $D$ has size $t$, then there exist $t$ internally vertex-disjoint $A, B$-paths in $D$.
\end{theorem}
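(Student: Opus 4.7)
The plan is to reduce this vertex-version of Menger's theorem to the edge-version (equivalently, max-flow min-cut with unit capacities), which I would prove by a standard augmenting-path argument on a residual network.

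First, I would merge $A$ and $B$ into a single source and sink to reduce to the classical $s,t$-case. Build $D^*$ from $D$ by adjoining two new vertices $s, t$, edges $sa$ for each $a \in A$, and edges $bt$ for each $b \in B$. A vertex subset $S \subseteq V(D)$ is an $A,B$-separator of $D$ of size $t$ if and only if $S$ is an $\{s\},\{t\}$-vertex separator of $D^*$ of the same size (not using $s$ or $t$), and internally vertex-disjoint $A,B$-paths in $D$ correspond naturally to internally vertex-disjoint $s,t$-paths in $D^*$.

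Next, I would apply the standard vertex-splitting trick. Form $D'$ with vertex set $\{v^-, v^+ : v \in V(D^*) \setminus \{s,t\}\} \cup \{s, t\}$; include a split edge $v^- v^+$ for each $v$, and for each original edge $uv$ the edge $u^+ v^-$, with edges incident to $s, t$ treated analogously. Then internally vertex-disjoint $s,t$-paths in $D^*$ correspond bijectively to edge-disjoint $s,t$-paths in $D'$, and a minimum $\{s\},\{t\}$-vertex separator of $D^*$ avoiding $s, t$ corresponds to a minimum edge cut of $D'$ which, after a local exchange, may be assumed to consist only of split edges (any non-split edge $u^+ v^-$ in a minimum cut can be swapped for $v^- v^+$ without increasing the cut size).

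Finally, I would invoke max-flow min-cut on $D'$ with unit edge capacities: starting from the zero flow, repeatedly augment along an $s,t$-path in the residual network; when no augmenting path exists, the set of vertices reachable from $s$ in the final residual network yields a cut of size equal to the flow value, while integrality of the flow guarantees that it decomposes into genuinely edge-disjoint $s,t$-paths. The main delicate point in the overall plan is the vertex-splitting correspondence, specifically verifying that a minimum cut can be taken on split edges only and treating vertices of $A \cap B$ correctly (these correspond to length-two $s,t$-paths through the split edge $v^- v^+$ in $D'$).
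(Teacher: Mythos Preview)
The paper does not give its own proof of Menger's Theorem; it is stated in Section~\ref{se:prelim} as a classical preliminary result and used as a black box (specifically in the proof of Lemma~\ref{lma:shortpathsatv(new)}). Your outline via source/sink contraction, vertex splitting, and max-flow min-cut is a standard and correct route to the directed vertex version, so there is nothing to compare against in the paper itself.
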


\subsection{Excess and partial decompositions}
We recall definitions from the introduction. Let $D$ be a directed graph. 
For a vertex $v \in V(D)$, recall that $\disc_D(v) := d^+_D(v) - d^-_D(v)$.
We define $\disc^+_D (v) : = \max \{0, \disc_D(v)\}$ and $\disc^-_D (v) : = \max \{0, -\disc_D(v)\}$.
Let 
\[
\disc(D) : = \frac{1}{2}\sum_{v \in V(D)} | \disc_D(v) | = \sum_{v \in V(D)}  \disc^+_D(v) = \sum_{v \in V(D)}  \disc^-_D(v).
\]
For $\ast \in \{+,-\}$, let $U^*(D) : = \{v \in V(D) : \disc_D^*(v) > 0\}$ and let $U^0(D): = \{v \in V(D) : \disc_D(v) = 0\}$. 

We state the following very simple observation  so we can refer to it later.
\begin{proposition}
\label{pr:disc}
Suppose $D$ is a directed graph and $H \subseteq D$ is a subdigraph in which $\disc^*_H(v) \leq \disc^*_D(v)$ for all $v \in V(D)$ and $\ast \in \{+,-\}$. Then $\disc(D) = \disc(H) + \disc(D-H)$
\end{proposition}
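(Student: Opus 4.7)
The plan is straightforward. I would work vertex-by-vertex and show the stronger statement that
\[
|\ex_D(v)| = |\ex_H(v)| + |\ex_{D-H}(v)| \quad \text{for every } v \in V(D);
\]
summing over $v$ and dividing by $2$ then gives the claim.

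The first observation is purely additive: since $E(D)$ is the disjoint union of $E(H)$ and $E(D-H)$, we have $d^\pm_D(v) = d^\pm_H(v) + d^\pm_{D-H}(v)$, and subtracting gives
\[
\ex_D(v) = \ex_H(v) + \ex_{D-H}(v).
\]
So what remains is to show that $\ex_H(v)$ and $\ex_{D-H}(v)$ have the same sign (with zero counting as either sign); once this is established, the triangle inequality becomes an equality.

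To force the sign agreement I would split into two cases according to the sign of $\ex_D(v)$. Suppose first that $\ex_D(v) \geq 0$, so $\ex^-_D(v) = 0$. The hypothesis $\ex^-_H(v) \leq \ex^-_D(v)$ then forces $\ex^-_H(v) = 0$, i.e.\ $\ex_H(v) \geq 0$. Moreover $\ex_H(v) = \ex^+_H(v) \leq \ex^+_D(v) = \ex_D(v)$, so $\ex_{D-H}(v) = \ex_D(v) - \ex_H(v) \geq 0$ as well. Hence both terms are non-negative and their absolute values add to $|\ex_D(v)|$. The case $\ex_D(v) \leq 0$ is symmetric, using the hypothesis $\ex^+_H(v) \leq \ex^+_D(v)$ instead. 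No step here looks like a genuine obstacle; the only thing to be careful about is not to forget that the hypothesis is assumed on \emph{both} signs $\ast \in \{+,-\}$, as each case uses one of them.
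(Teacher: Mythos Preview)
Your proof is correct and follows essentially the same approach as the paper's: both arguments establish that $\ex_D(v)$, $\ex_H(v)$, and $\ex_{D-H}(v)$ all share the same sign at each vertex, then use additivity of the excess and sum over vertices. You simply spell out the sign-agreement step more carefully than the paper, which asserts it in a single sentence.
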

\begin{proof}
To see this, note that either $\disc_D(v)$, $\disc_H(v)$, and $\disc_{D-H}(v)$ are all at least zero or all at most zero for each $v \in V(D)$. Hence $\disc_D(v) = \disc_H(v) + \disc_{D-H}(v)$ for all $v \in V(D)$. We sum over all vertices to obtain the result.
\end{proof}

The following definitions are convenient.
\begin{definition}
A \emph{perfect decomposition} of a digraph $D$ is a set  $\mathcal{P} = \{P_1, \ldots, P_r \}$ of edge-disjoint paths of $D$ that together cover $V(D)$ with  $r = \disc(D)$. 
(Thus Conjecture~\ref{conj:Pull} states that every even tournament has a perfect decomposition.)

A \emph{partial decomposition} of a digraph $D$ is a set   $\mathcal{P} = \{ P_1, \ldots, P_k \}$ of edge-disjoint paths of $D$ such that
for every $v \in V(D)$ at most $\disc^+_D(v)$ of the  paths start at $v$ and at most $\disc^-_D(v)$ of the paths end at $v$.
\end{definition}

It is easy to see that any subset of a perfect decomposition of $G$ is a partial decomposition of $G$.
We will need the following straightforward fact about perfect decompositions.

\begin{proposition}
\label{pr:acyclic}
If $D$ is an acyclic digraph then it has a perfect decomposition.
\end{proposition}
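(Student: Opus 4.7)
The plan is to proceed by induction on $|E(D)|$. The base case $|E(D)|=0$ is immediate: then $\disc(D)=0$ and the empty collection (or, if one insists on literally covering $V(D)$, the trivial single-vertex paths) is a perfect decomposition.

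For the inductive step, pick any edge of $D$ and extend it greedily in both directions to obtain a maximal directed path $P = s\cdots t$ in $D$. The key structural remark is that $s$ must be a source of $D$ and $t$ a sink of $D$: any in-neighbour of $s$ is either off $V(P)$ (contradicting maximality of $P$) or on $V(P)$ (producing a directed cycle, contradicting acyclicity), and symmetrically for $t$. In particular $\disc_D^+(s)\ge 1$ and $\disc_D^-(t)\ge 1$.

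Now set $D':=D-E(P)$, which is still acyclic and has strictly fewer edges. Using that $s$ is a source of $D$ and $t$ a sink of $D$, one checks directly that $\disc_{D'}^+(s)=\disc_D^+(s)-1$, $\disc_{D'}^-(t)=\disc_D^-(t)-1$, and $\disc_{D'}(v)=\disc_D(v)$ for every other vertex $v$ (each internal vertex of $P$ loses exactly one in-edge and one out-edge, so its excess is unchanged). These inequalities verify the hypothesis of Proposition~\ref{pr:disc} with $H=P$, giving
\[
\disc(D)=\disc(P)+\disc(D')=1+\disc(D').
\]
By the induction hypothesis, $D'$ admits a perfect decomposition $\mathcal{P}'$ with $|\mathcal{P}'|=\disc(D)-1$, and then $\mathcal{P}'\cup\{P\}$ is the required perfect decomposition of $D$.

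There is no serious obstacle: the entire argument turns on the easy fact that maximal paths in acyclic digraphs run from a source to a sink of the whole digraph, together with the bookkeeping showing that deleting such a path decreases the total excess by exactly one, which keeps the induction in lockstep with the path count.
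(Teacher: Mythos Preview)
Your proof is correct and takes essentially the same approach as the paper's: both argue by induction, repeatedly removing a maximal path (the paper says ``maximum length'', but maximality is all that is used), and both rely on the observation that in an acyclic digraph such a path must run from a source to a sink, so its removal decreases the excess by exactly one.
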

\begin{proof}
Iteratively remove paths of maximum length. Note that removing such a path from an acyclic digraph reduces the excess by one (since such a path must begin at a vertex $v$ where $d^-(v)=0$ (and hence $\disc(v)>0$), and must end at a vertex where $d^+(v)=0$ (and hence $\disc(v)<0$). So the proposition holds by induction. 
\end{proof}

\section{Exact Decomposition for tournaments with high excess}
\label{se:highdisc}


In this section we prove Theorem~\ref{thm:LPST2}.
We start by showing that any Eulerian oriented graph can be decomposed into a small number of cycles. We will also need an extra technical condition on this cycle decomposition.
We use the following result of Huang, Ma, Shapira, Sudakov, Yuster~\cite[Proposition 1.5]{HMSSY}.

\begin{lemma}
\label{lma:sudakov(new)}
Every Eulerian digraph $D$ with $n$ vertices and $m$ edges has a cycle of length $1 + \max(m^2/24n^3 , \lfloor \sqrt{m/n} \rfloor )$.
\end{lemma}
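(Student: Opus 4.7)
The plan is to lower-bound the length $L$ of a longest cycle in $D$ by each of the two quantities in the maximum separately, and then combine.

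First, for $L \geq 1 + \lfloor \sqrt{m/n}\rfloor$: I would reduce to a subdigraph of $D$ with high minimum semi-degree. While there is a vertex $v$ with $d^+(v) < \sqrt{m/n}$ or $d^-(v) < \sqrt{m/n}$, delete $v$; each deletion removes fewer than $2\sqrt{m/n}$ edges, so after at most $n$ deletions we have removed at most $2\sqrt{mn}$ edges in total. Provided $m$ is not too small (small cases are checked directly), this leaves a nonempty subdigraph $D'$ with $\delta^0(D') \geq \sqrt{m/n}$. A standard longest-path argument on $D'$ then produces a cycle of length $\geq \delta^0(D') + 1$: take a longest path $v_0 v_1 \cdots v_k$ in $D'$; all out-neighbours of $v_k$ must lie on this path by maximality, and if $v_i$ is the earliest such out-neighbour then $v_i v_{i+1} \cdots v_k v_i$ is a cycle of length at least $d^+_{D'}(v_k) + 1 \geq \sqrt{m/n} + 1$. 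Note this step does not use Eulerianness of $D$ after the reduction.

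Second, for $L \geq 1 + m^2/24n^3$ (the relevant regime being $m \gg n^{3/2}$): here I would exploit Eulerianness more directly via a cycle decomposition. Since $D$ is Eulerian, it decomposes into edge-disjoint cycles of total length $m$, and if every cycle has length at most $L$ then there are at least $m/L$ of them. By double counting, some vertex $v$ lies on at least $m/(nL)$ of these cycles. I would then attempt to splice two of these cycles through $v$ to obtain a cycle longer than $L$; iterating (or combining many of them) and tracking the total length carefully should give the quadratic improvement in the dense regime.

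The main obstacle will be making the splicing argument sharp enough to produce the coefficient $1/24$ in the $m^2/n^3$ term. If two short cycles through $v$ share only the vertex $v$, their concatenation is a cycle of their combined length, but if they share additional vertices the naive concatenation is only a closed walk, and extracting a sufficiently long simple cycle from such a walk requires a careful pigeonhole over the in- and out-neighbourhoods of $v$ across the many cycles, or an auxiliary structural/inductive argument on a suitably contracted digraph. Reconciling the sparse and dense regimes and pinning down the explicit constant is where the bulk of the technical work lies; the sparse bound via the semi-degree reduction is the easy half.
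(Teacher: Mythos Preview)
The paper does not prove this lemma at all; it is quoted verbatim as Proposition~1.5 of Huang, Ma, Shapira, Sudakov, and Yuster and used as a black box. So there is no in-paper proof to compare against, and your proposal has to stand on its own.

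Your argument for the $\lfloor\sqrt{m/n}\rfloor$ bound contains a concrete error. You iteratively delete any vertex with $d^+(v)<\sqrt{m/n}$ or $d^-(v)<\sqrt{m/n}$ and assert that each deletion removes fewer than $2\sqrt{m/n}$ edges. That is only true at the first step, where Eulerianness forces $d^+(v)=d^-(v)$; after one deletion the remaining digraph is no longer Eulerian, and a vertex with small out-degree can have large in-degree (the imbalance can grow by one with each deletion). A more careful count gives that deleting $t$ vertices removes up to roughly $2t\sqrt{m/n}+\binom{t}{2}$ edges, and the $\binom{t}{2}$ term swamps $m$ long before you are guaranteed a nonempty remainder. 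Note also that the statement is false for general digraphs (the transitive tournament has $\binom{n}{2}$ edges and no cycle), so any correct proof must use Eulerianness throughout, not just at the first step; your reduction does not.

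For the $m^2/(24n^3)$ bound you locate the difficulty correctly but your splicing heuristic does not work as stated. If two edge-disjoint cycles $C_1,C_2$ meet only at a single vertex $v$, then $C_1\cup C_2$ contains no simple cycle longer than $\max(|C_1|,|C_2|)$: any simple cycle visits $v$ at most once and is thereafter confined to one of the $C_i$. To gain length you need the cycles to share a second vertex, and turning ``many short cycles through a common vertex'' into one long cycle requires a genuinely different structural argument. The proof in the cited paper does not proceed by splicing cycles; it goes via feedback arc sets and passes to a subdigraph of high minimum degree, so both halves of your sketch are headed in a different direction from the actual argument and neither is yet a proof.
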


\begin{lemma} \label{lma:EulerDecomp(new)}
Let $n \in \mathbb{N}$.
 Let $D$ be an Eulerian oriented graph with $n$ vertices. Then we can decompose $D$ into $t \le 50 n^{4/3} \log n$ cycles $C_1, \ldots, C_t$ and for each cycle $C_i$ we can find distinct representatives $x^i_1, x^i_2, \ldots, x^i_{r_i} \in V(C_i)$ (indexed in order) with the following properties:
 \begin{enumerate}[label = {\rm (\roman*)}]
	\item Every cycle has at least two representatives, i.e. $r_i \geq 2$ for all $i$;
	\item The interval between consecutive vertices on a cycle $x^i_{j}C_i x^i_{j+1}$ has length at most $n^{2/3}$;
	\item Every vertex $v \in V$ occurs as a representative at most $24 n^{2/3}\log^{1/2}n$ times.
\end{enumerate} 
\end{lemma}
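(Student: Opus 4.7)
The plan is to first decompose $D$ into cycles by iterating Lemma~\ref{lma:sudakov(new)}, and then select the representatives via Theorem~\ref{thm:Hall}. For the decomposition, I would set $D_0 := D$ and, while $D_i$ has an edge, apply Lemma~\ref{lma:sudakov(new)} to extract a cycle $C_{i+1}\subseteq D_i$ of length at least $L_i := 1 + \max(m_i^2/(24n^3), \lfloor\sqrt{m_i/n}\rfloor)$ where $m_i := |E(D_i)|$, and set $D_{i+1} := D_i - E(C_{i+1})$. Each $D_i$ is still Eulerian, so the process terminates with a decomposition $C_1, \ldots, C_t$. To bound $t$, split the analysis at the crossover $m \sim n^{5/3}$ where the two terms in $L_i$ coincide. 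In the regime where $m_i^2/(24n^3)$ dominates, the identity $1/m_{i+1} - 1/m_i \ge 1/(24n^3)$ telescopes to give at most $O(n^{4/3})$ steps; in the regime where $\sqrt{m_i/n}$ dominates, tracking the potential $2\sqrt{nm_i}$ (which drops by at least a constant per step) gives at most $2\sqrt{n \cdot n^{5/3}} = O(n^{4/3})$ steps. Hence $t = O(n^{4/3}) \le 50 n^{4/3}\log n$ with room to spare.

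For the representatives, for each cycle $C_i$ of length $\ell_i$, set $r_i := \max(2, \lceil 2\ell_i/n^{2/3}\rceil)$ and partition the cyclic vertex-sequence of $C_i$ into $r_i$ consecutive arcs, each of size at most $\lceil n^{2/3}/2\rceil$. Selecting one vertex from each arc automatically yields $r_i \ge 2$ distinct in-order representatives whose consecutive interval on $C_i$ has length at most $|A_j| + |A_{j+1}| - 1 \le n^{2/3}$, giving (i) and (ii); moreover $\sum_i r_i \le 2t + 2m/n^{2/3} = O(n^{4/3})$. To get (iii), I would form the bipartite graph $G$ with parts $\mathcal{A}$ (the set of all arcs across all cycles) and $V(D)$, joining $\alpha \in \mathcal{A}$ to $v \in V(D)$ iff $v \in \alpha$, and apply Theorem~\ref{thm:Hall} with $k := K := 24 n^{2/3}\log^{1/2}n$; a valid rep selection then corresponds exactly to a matching of $\mathcal{A}$ into $V(D)$ with per-vertex multiplicity at most $K$. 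To verify Hall's hypothesis $K|N_G(X)| \ge |X|$, set $T := N_G(X)$: every arc in $X$ has its vertex set inside $T$, each non-singleton arc contributes at least one edge of $D[T]$ so there are at most $|E(D[T])| \le |T|^2/2$ of them, and (as explained below) each vertex of $T$ is assigned at most $O(\sqrt{n})$ singleton arcs, giving $O(\sqrt{n})|T|$ in total. Thus $|X| \le |T|^2/2 + O(\sqrt{n})|T|$, which is at most $K|T|$ for $|T| \le 2K$; for $|T| > 2K$ the global bound $|X| \le \sum_i r_i = O(n^{4/3}) \le K|T|$ applies, since $n^{4/3}/K = O(n^{2/3}/\log^{1/2} n) \le 2K$ (using $K \ge n^{2/3}$). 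The two regimes overlap and Hall's condition holds.

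The main obstacle is the singleton arcs arising from length-$3$ cycles (the partition is singleton-free once $\ell_i \ge 4$), each of which pins a specific vertex as a representative. I would resolve this by a preprocessing step that decides, for each length-$3$ cycle, which of its three vertices plays the singleton role. Since Lemma~\ref{lma:sudakov(new)} guarantees length $\ge 4$ once $m \ge 9n$, the decomposition contains at most $3n$ length-$3$ cycles; moreover any family of $s$ edge-disjoint triangles spans at least $\sqrt{6s}$ vertices, via the edge count $3s \le |E(D[N])| \le |N|^2/2$. A separate application of Theorem~\ref{thm:Hall} then lets us assign singletons so that no vertex is forced more than $\lceil\sqrt{n/2}\rceil$ times, which is well below $K$ and cleanly absorbed into the main Hall calculation above.
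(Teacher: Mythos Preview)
Your proposal is correct and follows the same high-level strategy as the paper (iterate Lemma~\ref{lma:sudakov(new)} for the decomposition, then use Hall's theorem to spread out the representatives), but the execution differs in two places worth noting.

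For the cycle count, the paper uses the uniform lower bound $g(r) > r/(24n^{4/3})$ on the cycle length and the resulting geometric decay $m_{t+1} \le m_t(1 - 1/(24n^{4/3}))$, which costs a $\log n$ factor. Your two-regime potential argument (telescoping $1/m_i$ above the crossover and $\sqrt{nm_i}$ below it) is sharper and gives $t = O(n^{4/3})$ outright; the $\log n$ in the statement is then slack.

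For the representatives, the paper splits into long cycles (length $> n^{2/3}$), where it subdivides into intervals and assigns representatives greedily, and short cycles, where it applies Hall twice using the inequality $|V(\bigcup \mathcal{C}')| \ge |\mathcal{C}'|^{1/2}$. Your unified arc-partition with a single Hall application is cleaner in that it avoids the long/short dichotomy, and your Hall verification (non-singleton arcs contribute distinct edges of $D[T]$, so at most $|T|^2/2$ of them) is essentially the same edge-counting idea the paper uses for short cycles. The price you pay is the extra preprocessing step for the triangles, which the paper avoids because it only needs two representatives per short cycle rather than one per arc; on the other hand, your triangle step is short and self-contained. Both routes land comfortably within the stated $24 n^{2/3}\log^{1/2} n$ bound.
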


\begin{proof}
We first show that $D$ can be decomposed into at most $50n^{4/3} \log n$ cycles. Assume $D$ has $m < n^2/2$ edges.
We iteratively remove the longest cycle and let $m_t$ be the number of edges remaining at step $t$. From Lemma~\ref{lma:sudakov(new)} we have that $m_{t+1} \leq m_t - g(m_t)$ where 
\[
g(r) = \max\{r^2/24n^3 , \lfloor \sqrt{r/n} \rfloor \} > \frac{r}{24n^{4/3}}.
\] 
To see the inequality note that if $r  \ge n^{5/3}$, then $r^2/24n^3 \ge r/24n^{4/3}$, and if $r <n^{5/3}$, then $\sqrt{r/n} > r/n^{1/2+5/6} = r/n^{4/3}$.
Thus we see that 
\[
m_{t+1} < m_t -  \frac{m_t}{24n^{4/3}} = m_t\left( 1 - \frac{1}{24n^{4/3}} \right) \leq m_t \exp\left( - \frac{1}{24n^{4/3}} \right).
\]
Hence $m_t < \exp(-t/24n^{4/3})n^2$ from which we see that $m_t < 1$ after at most $50n^{4/3} \log n$ steps, giving at most as many cycles in the greedy decomposition of $D$.

Next we show how to obtain the representatives. Assume we have a decomposition of $D$ into a minimum number of cycles $C_1, \ldots, C_t$, where we know $t \leq 50 n^{4/3} \log n$.

 First we treat the long cycles. Assume without loss of generality that $C_1, \ldots, C_k$ are the cycles in our decomposition of length larger than $n^{2/3}$. Divide each such cycle $C_i$ into intervals $I^i_1, \dots, I^i_{r_i}$ each of length between $n^{2/3}/4$ and $n^{2/3}/2$ with $r_i$ minimal. Note that $ r_i \le 4 |E(C_i)| n^{-2/3} $ for all $i \in [k]$. Thus in total we have at most $\sum_{i \in [k]} 4 |E(C_i)| n^{-2/3} \le 4|E(D)|n^{-2/3} \le 2 n^{4/3}$ intervals each of length at least $n^{2/3}/4$.
Therefore, we can greedily pick $x^i_j \in I^i_j$ such that no vertex in $V(D)$ appears as a representative more than $8 n^{2/3}$ times.   

Consider the remaining (short) cycles $C_{k+1}, \ldots, C_t$, for which we need only find two representatives each. 
Let $\mathcal{C} = \{C_{k+1}, \ldots C_t \}$.
First, we will find one representative in each cycle of $\mathcal{C}$ such that no vertex is chosen more than $8 n^{2/3}\log^{1/2}n$ times.
Let $H$ be the bipartite graph with vertex partitions $\mathcal{C}$ and $V(D)$, where for $C \in \mathcal{C}$ and $v \in V(D)$ are joined if and only if $v \in V(C)$.
We now apply (a version of) Hall's theorem (Theorem~\ref{thm:Hall}) to find one representative in each $\mathcal{C}$ such that no vertex is chosen more than $8 n^{2/3}\log^{1/2}n$ times.
If such a collection of representatives does not exist, then Theorem~\ref{thm:Hall} implies that there exists a subset $\mathcal{C}'$ of~$\mathcal{C}$ such that $8 n^{2/3}\log^{1/2}n |N_H (\mathcal{C}') |  < |\mathcal{C}'|  $.
On the other hand, we have
\begin{align*}
	|N_H (\mathcal{C}') | = | V ( \bigcup \mathcal{C}' ) | \ge |E(\bigcup \mathcal{C}')|^{1/2} \ge  | \mathcal{C}'| ^{1/2}.
\end{align*}
This implies that $t \ge |\mathcal C'| > 64 n^{4/3} \log n$, a contradiction.
Thus we have found one representative $x^i_1 \in V(C_i)$ for each $k+1 \le i \le t$ such that each vertex $v \in V$ occurs as a representative at most $8 n^{2/3}\log^{1/2}n$ times.
Next let $P_i := C_i \setminus x^i_1$ for each $k+1 \le i \le t$.
Note that $|E(P_i)| \ge 1$. 
By a similar argument as above, we can find one representative $x^i_2 \in V(P_i)$ for each $k+1 \le i \le t$ such that each vertex $v \in V$ occurs as a representative at most $8 n^{2/3}\log^{1/2}n$ times.
In summary, we have found two distinct representatives for each $C \in \mathcal{C}$ such that each $v \in V$ occurs as a representative at most $16 n^{2/3}\log^{1/2}n$ times.

Now combining the representatives of the long cycles and the short cycles, we see that each vertex is represented at most $8n^{2/3} + 16 n^{2/3}\log^{1/2}n \leq 24 n^{2/3}\log^{1/2}n$ times.
\end{proof}

For the remainder of the section, assume $T=(V,E)$ is a tournament with $\disc(T) > 7n^{17/9 + \gamma}$ where $1/n \ll \gamma$. In the next two lemmas, we will construct paths in $T$ that will form a partial decomposition of $T$ when combined in the right way. Moreover, it will turn out that these paths can also be used to ``absorb'' cycles; this is the crucial idea of the proof of Theorem~\ref{thm:LPST1}.

For any digraph $D$, any $s \in \mathbb{R}$, and $* \in \{+,-\}$, we define $W^*_s(D):=\{v \in V(D): \disc^*_D(v) \geq s \}$.

\begin{lemma} \label{lma:shortpathsatv(new)}
Let $n \in \mathbb{N}$ and $1/n \ll \gamma$.
Suppose that $T=(V,E)$ is a tournament on $n$ vertices with $\disc(T) \geq 8 n^{17/9+\gamma}$. Set $s = n^{8/9 + \gamma}$.
Let $H \subseteq T$ with $\Delta(H) \leq s$ and $S \subseteq V$ with $|S| \leq s$. 
For any $v \in V \setminus S$, there exist $n^{2/3+\gamma}$ paths in $T- H - S$ that start in $W^+_{s}$, end at $v$, have length at most $4n^{1/9}$, and are vertex-disjoint except at their end-point $v$.
\end{lemma}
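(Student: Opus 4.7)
My plan is to construct the $k := n^{2/3+\gamma}$ required paths one at a time via a reverse breadth-first search from~$v$. First I collect the basic bounds: since each vertex outside $W^+_s$ contributes at most~$s$ to $\disc(T)$, we have $|W^+_s| \ge (\disc(T) - ns)/n \ge 7n^{8/9+\gamma} = 7s$, and each $w \in W^+_s$ satisfies $d^+_T(w) \ge (n-1+s)/2 \ge n/2$. After deleting~$S$, the edges of~$H$, and the internal vertices~$U$ of paths already built (with $|U| \le 4(k-1)n^{1/9} \le 4n^{7/9+\gamma}$, a quantity much smaller than~$s$), we still have $|W^+_s \setminus U| \ge 6s$ and $d^+_{T-H-S-U}(w) \ge n/2 - 3s$ for every $w \in W^+_s$.

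Writing $G := T - H - S - U$, I build the next path via a reverse BFS from~$v$ in~$G$: set $B_0 := \{v\}$ and $B_{j+1} := B_j \cup N^-_G(B_j)$. Once $|B_j|$ exceeds $n/2 + 3s$, every $w \in W^+_s \setminus U$ satisfies $|N^+_G(w)| + |B_j| > |V(G)|$, forcing an out-neighbour in~$B_j$ and hence $w \in B_{j+1}$. A shortest BFS-witness path from such a~$w$ to~$v$ then provides a suitable~$P_i$ of length at most $j+1$. So it suffices to show that $|B_j|$ passes $n/2 + 3s$ within $4n^{1/9} - 1$ BFS steps, i.e.\ that the ball grows on average by at least~$s/4$ per step. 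I would establish this via an expansion statement: for any $A \subseteq V(G)$ with $|A|$ in the relevant range, the ``anti-in-neighbour'' set $C := V(G) \setminus (A \cup N^-_G(A))$ is small relative to $|A|$. The argument is a double count: each $c \in C$ has all $|A|$ of its $T$-edges to~$A$ oriented $A \to c$ except for at most $\Delta(H) = s$ of them, so if $|C|$ were large, the~$c$'s would collectively be forced to have low or negative excess, contradicting the global bound $\disc(T) \ge 8n^{17/9+\gamma}$ (we also use that nearly all of the positive excess must lie in $V(G) \setminus C$, which is exactly the growing ball).

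The crux of the proof is the expansion lemma, and the main obstacles are: (i) in a generic tournament reverse BFS from~$v$ can stall (e.g.\ if $d^-_T(v) = 0$), so the argument must use $\disc(T) \ge 8n^{17/9+\gamma}$ in a nontrivial way, and an initial ``seeding'' step at~$v$ may be needed to handle the case when~$v$ itself has very low in-degree; (ii) the perturbation bookkeeping (for~$H$, $S$, and the iteratively growing~$U$) must be uniform as~$A$ ranges from small up to size close to~$n$; and (iii) throughout the~$k$ iterations we must check that~$v$ retains enough fresh in-edges in~$G$ to supply a distinct in-edge at each step. Together these constraints pin down the precise relation between the exponents $17/9$, $8/9$, $2/3$, $1/9$, and~$\gamma$ appearing in the statement.
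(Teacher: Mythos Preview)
Your plan is viable but takes a genuinely different route from the paper. The paper's proof is a one-shot application of Menger's Theorem: it shows that any $(W^+_s \setminus S,\,v)$-separator in $T-H-S$ has size at least $s$ (so there exist $s$ internally vertex-disjoint paths from $W^+_s$ to $v$), and then the $n^{2/3+\gamma}$ shortest of those paths must have length $\le 4n^{1/9}$ by a simple vertex count. The separator bound is obtained by exactly the device you sketch---pick $x^*\in B$ with $\disc_{T[B]}(x^*)\ge 0$, where $B$ is the set of vertices that can reach $v$ after the separator is removed, and deduce $\disc_T(x^*)$ is too large---so the combinatorial heart of the two arguments is the same. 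What differs is the packaging: Menger plus pigeonhole replaces your iterated BFS, yields all paths at once, and avoids the bookkeeping on $U$ entirely.

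A few remarks on your outline. First, obstacle~(i) is a non-issue: if $d^-_T(v)$ is small then $\disc_T(v)$ is large, so $v\in W^+_s$ and the trivial path $\{v\}$ works; once $v\notin W^+_s$ you automatically get $|B_1|\ge n/2-O(s)$ with no seeding. Second, your expansion claim is much stronger than you state: running the ``pick $x^*$ of nonnegative excess in $B_j$'' argument carefully gives $|B_{j+1}| > \tfrac12(n+|B_j|) - O(s)$, so the gap to $n$ halves each step and $|B_j|$ exceeds $n/2+3s$ after a \emph{constant} number of BFS rounds, not $4n^{1/9}$; in particular the computation ``average growth $\ge s/4$'' does not match ``$4n^{1/9}$ steps to gain $7s$'', and you should not expect the length bound $4n^{1/9}$ to be tight for your construction. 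Third, obstacle~(iii) is immediate since $d^-_G(v)\ge n/2-O(s)\gg k$.

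In short: your BFS approach works and is arguably more elementary (no Menger), but the paper's Menger argument is shorter, cleaner, and sidesteps the iterative bookkeeping.
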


In the statement above, a path could be a single vertex $v$ and in that case we think of $v$ as being vertex disjoint with itself except at its endpoint.

Note that, by symmetry, the same result as above holds if we wish to find paths from $v$ to $W^-_s$. 
\begin{proof}
We write $W^+$ for $W^+_{s}$ and note that $\disc(T) \leq |W^+|n + ns$ so that $|W^+| \geq 7n^{8/9 + \gamma}$.
If $v \in W^+$ then we are done (by the remark above), so assume not. 
Write $T' := T - H - S$. 
Let $A^+ := W^+ \setminus S$.
Suppose that all $(A^+,v)$-separators in~$T'$ have size at least $s$. 
Thus by Menger's Theorem we can find at least $s$ paths in $T'$ that start in $A^+$, end at~$v$ and are vertex disjoint except for their common 
endpoint $v$. 
If we pick the shortest $n^{2/3+\gamma}$ of these paths, they all have length at most $4n^{1/9}$ (since otherwise we  have at least $s - n^{2/3} \geq \frac{1}{2}n^{8/9}$ paths of length at least $4n^{1/9}$ that are vertex-disjoint except for one common vertex; such paths cover at least $\frac{1}{2}n^{8/9} \cdot (4n^{1/9} - 1) > n$ vertices, a contradiction).
Therefore to prove the lemma, it suffices to show that all $(A^+,v)$-separator~$X$ in $T'$ satisfy $|X| \ge n^{8/9+\gamma}$.

Let $X$ be a $(A^+,v)$-separator in $T'$ and let $\hat{T} := T' - X = T - H - (S \cup X)$. 
Define
\[
B = \{x \in V(\hat{T}): \exists \text{ a path from } x \text{ to } v \text{ in } \hat{T}   \}.
\] 
Then $A^+ \cap B = \emptyset$ (since otherwise $X$ is not a $(A^+,v)$-separator) and so $W^+ \cap B = \emptyset$. Furthermore, by the definition of $B$ there are no 
directed edges in $\hat{T}$ from $\overline{B} := V(\hat{T}) \setminus B$ to~$B$. Using this and the fact that $T$ is a tournament we have for all $x \in B$ that
\begin{align*}
|N_T^+(x) \setminus B| 
&\geq V(T) - |B| - |X| - |S| - \Delta(H) \\
&\geq |W^+| -|X| - |S| - \Delta(H),
\end{align*}
and
\[
|N_T^-(x) \setminus B| \leq |X| +|S| + \Delta(H).
\]
Pick a vertex~$x^* \in B$ with $\disc_{T[B]}(x^*) \geq 0$ (note that 
every directed graph has a vertex with non-negative 
excess). Then we have 
\begin{align*}
\disc_T(x^*) & \ge \disc_{T[B]}(x^*) + |N_T^+(x^*) \setminus B| -  |N_T^-(x^*) \setminus B|  \\
&\geq 0 + |W^+| - 2|X| -  2 |S| - 2\Delta(H) 
\ge |W^+| - 4s -2|X|.
\end{align*} 
We know that $\disc_T(x^*) \leq s$ (otherwise $x^* \in W^+$, a contradiction) and that $|W^+| \geq 7 n^{8/9+\gamma} = 7s$. 
Hence $|X| \geq s = n^{8/9+\gamma}$, 
as required.
\end{proof}

By inductively applying the previous lemma, we obtain the following.

\begin{lemma} \label{lma:shortpathseverywhere(new)}
Let $n \in \mathbb{N}$ and $1/n \ll \gamma$.
Suppose that $T=(V,E)$ is a tournament on $n$ vertices with $\disc(T) \geq 8 n^{17/9+\gamma}$.
Let $\ell := n^{2/3+\gamma}$ and $m := 4n^{1/9}$ and $s:= n^{8/9 + \gamma}$.
Then we can find edge-disjoint paths $P^v_j, Q^v_j$ where $v \in V$, $j=1, \ldots, \ell$ with the following properties:
\begin{enumerate}[label = {\rm (\roman*)}]
	\item $P^v_j$ is a path of length at most $m$ from $W^+_{s}$ to $v$ and $Q^v_j$ is a path of length at most $m$ from $v$ to $W^-_{s}$;
	\item for each fixed $v \in V$, the paths $P^v_1, \ldots, P^v_{\ell}$ are vertex-disjoint except that they all meet at $v$ and the paths $Q^v_1, \ldots, Q^v_{\ell}$ are vertex-disjoint except that they all meet at $v$;
	\item $\Delta ( \bigcup _{ v,j } (P^v_j \cup Q^v_j)) < n^{8/9+\gamma} =s$.
\end{enumerate} 
\end{lemma}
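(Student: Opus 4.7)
The plan is to construct the $2n$ batches $\{P^v_j\}_{j\in[\ell]}$ and $\{Q^v_j\}_{j\in[\ell]}$ (one for each pair $(v,*)$ with $v \in V$ and $* \in \{P,Q\}$) iteratively, one batch at a time, by repeatedly invoking Lemma~\ref{lma:shortpathsatv(new)} on the current partial union $H$ (which we update after each batch). Order the $2n$ batches in any way and start with $H := \emptyset$. For each batch $(v,*)$ in turn, define the exclusion set
\[
	S \;:=\; \{u \in V(T) : d_H(u) \ge s/3\} \setminus \{v\}
\]
of currently nearly-saturated vertices (with $v$ explicitly removed so that $v \in V \setminus S$ as required by Lemma~\ref{lma:shortpathsatv(new)}), apply the lemma (or its symmetric version for $Q$-batches, available by the remark after the lemma) to get $\ell$ paths in $T - H - S$ of the desired form, and append them to $H$.

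The two hypotheses of Lemma~\ref{lma:shortpathsatv(new)} must be verified to hold at every iteration, namely $\Delta(H) \le s$ and $|S| \le s$. Since $H$ is a union of at most $2n$ batches each contributing at most $\ell m$ edges, $|E(H)| \le 2n\ell m = 8n^{16/9+\gamma}$, so at most $48 n^{8/9}$ vertices have $d_H(u) \ge s/3$; this is less than $s = n^{8/9+\gamma}$ for large $n$, establishing $|S| \le s$. For the maximum degree, fix any vertex $u$ and let $t_u$ be the first moment at which $d_H(u) \ge s/3$. The batch at time $t_u$ raises $d_H(u)$ by at most $\ell$ (with this increment attained only when that batch is one of $u$'s own batches; for any other batch, the increment is at most $2$, since the $\ell$ paths in one batch are vertex-disjoint except at the target). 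After $t_u$ the vertex $u$ lies in the exclusion set of every subsequent non-own batch, so no further contribution comes from those; $u$'s remaining own batches, of which there are at most two in total, each contribute $\ell$, for a grand total of at most $2\ell$. Hence $d_H(u) \le s/3 + 2\ell + 2$ throughout the process, which is strictly less than $s$ because $\ell = n^{2/3+\gamma} \ll s = n^{8/9+\gamma}$ for large $n$. This is precisely property~(iii), while properties~(i) and~(ii) are immediate from applying Lemma~\ref{lma:shortpathsatv(new)} to each batch individually.

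The main obstacle is this delicate bookkeeping: one must pick a threshold (here $s/3$) for membership in $S$ which is simultaneously low enough that the post-threshold degree contribution to any $u$ is bounded by $O(\ell)$, and high enough that at most $s$ vertices ever cross it. This dual constraint is exactly what the relation $n \ell m / s = \Theta(n^{8/9})$ affords, thanks to the polynomial slack $n^\gamma$ built into the choices of $\ell$, $m$ and $s$. A minor additional subtlety is that $v$ may itself satisfy $d_H(v) \ge s/3$ by the time its own batch is processed, which is why we explicitly delete $v$ from the definition of $S$; this deletion changes $|S|$ by at most one and leaves both the $|S| \le s$ bound and the pre-saturation degree tally for vertices other than $v$ untouched.
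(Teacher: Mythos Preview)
Your proposal is correct and takes essentially the same approach as the paper: iteratively build the path families by repeated application of Lemma~\ref{lma:shortpathsatv(new)}, using a degree-threshold exclusion set to keep $\Delta(H)$ under control. The paper processes both the $P$- and $Q$-batch for each vertex in a single step (so $n$ rounds rather than $2n$) and uses threshold $s/4$ rather than $s/3$, but these are cosmetic differences; the degree bookkeeping and the edge-count bound on $|S|$ are identical in spirit.
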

\begin{proof}
Fix an ordering $v_1, \ldots, v_n$ of the vertices of $T$ and inductively construct the desired paths as follows. Suppose at the $k$th step, we have constructed the $P^{v_{i}}_j$ and $Q^{v_{i}}_j$ for all $i\leq k-1$ and all $j \le \ell $ satisfying the first two conditions of the lemma.
Furthermore, we assume that the oriented graph $H_{k-1}$ on $V$, which is union of the paths constructed so far, satisfies
\begin{align} \label{path-induction1'}
d_{H_{k-1}}(v_i) &\leq 2\ell + s/2   &&\forall 1 \le i \le k-1 
\\ \label{path-induction2'} \text{ and }  
d_{H_{k-1}}(v_i) &\leq s/2  &&\forall k \leq i \leq n.
\end{align}

By our choice of parameters, we have 
\begin{equation}
\label{eqn:DeltaH}
\Delta(H_{k-1}) \leq 2 \ell + s/2  <  s.
\end{equation}
Let $S^*$ be the set of vertices $v \in V$ such that $d_{H_{k-1}}(v) \ge s/4 $.
Note that $\frac14 s |S^*| \le 2|E(H_{k-1})| \le 4 n m \ell =  16 n^{16/9+\gamma}  $, so $|S^*| \leq 64n^{8/9} \le s $.

Now applying Lemma~\ref{lma:shortpathsatv(new)} (where $(H_{k-1}, S^*)$ play the role of $(H,S)$), we obtain vertex-disjoint (except at $v_k$) paths  $P^{v_{k}}_j$ for all $j \le \ell$ from $W_s^+$ to $v_k$ each of length at most~$m$.
Applying Lemma~\ref{lma:shortpathsatv(new)} again (where $(H_{k-1} \cup (\bigcup_j E(P^{v_k}_j), S^*)$ play the roles of $(H,S)$ and noting $\Delta (\bigcup_j E(P^{v_k}_j) \le \ell$), we obtain vertex-disjoint (except at $v_k$) paths  $Q^{v_{k}}_j$ for all $j \le \ell$ from $v$ to $W^-_{s}$, each of length at most $m$. Note that all the new paths are edge-disjoint from each other and from the old ones and satisfy conditions (i) and (ii) of the lemma. 

Letting $H_k$ be the union of all the paths constructed so far, note that compared to $H_{k-1}$, the degree of $v_k$ goes up by at most $2\ell$ and the degree of every vertex $v \in V \setminus ( S^* \cup v_k)$ goes up by at most $4$. 
Thus (\ref{path-induction1'}) and (\ref{path-induction2'}) hold. At the $n$th step we are able to construct all the paths satisfying properties (i) and (ii), and property (iii) also holds by~(\ref{eqn:DeltaH}). 
\end{proof}

We now prove the following theorem which immediately implies Theorem~\ref{thm:LPST1} by taking $\eps = 1/18$.

\begin{theorem}
\label{lma:Exactdec-highdiscexp(new)}
Let $n \in \mathbb{N}$ and $1/n \ll \gamma$.
Suppose that $T=(V,E)$ is a tournament on $n$ vertices with $\disc(T) \geq 8 n^{17/9+\gamma}$.
Then $T$ has a perfect decomposition.
\end{theorem}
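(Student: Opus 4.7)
The plan is to prove Theorem~\ref{lma:Exactdec-highdiscexp(new)} by an absorption argument. First apply Lemma~\ref{lma:shortpathseverywhere(new)} to reserve, for every $v \in V$ and $j \in [\ell]$ (with $\ell = n^{2/3+\gamma}$, $m = 4n^{1/9}$, $s = n^{8/9+\gamma}$), short absorber paths $P^v_j$ from some vertex of $W^+_s$ to $v$ and $Q^v_j$ from $v$ to some vertex of $W^-_s$, and set $H := \bigcup_{v,j}(P^v_j \cup Q^v_j)$. Since every $H$-path starts in $W^+_s$ and ends in $W^-_s$, and these sets are disjoint, one checks $\disc^{\pm}_H(u) \le \disc^{\pm}_T(u)$ at every vertex $u$, so Proposition~\ref{pr:disc} yields $\disc(T) = \disc(H) + \disc(T-H)$; a direct count gives $\disc(H) = n\ell$. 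Next, decompose $T-H$ as $F \cup D^{\mathrm{ac}}$ with $F$ Eulerian (obtained by iteratively extracting cycles) and $D^{\mathrm{ac}}$ acyclic; Proposition~\ref{pr:acyclic} then yields a perfect decomposition $\mathcal{Q}$ of $D^{\mathrm{ac}}$ with $|\mathcal{Q}| = \disc(D^{\mathrm{ac}}) = \disc(T-H)$.

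The remaining task is to produce exactly $n\ell$ additional paths, each from $W^+_s$ to $W^-_s$, that together use up all edges of $F \cup H$. Apply Lemma~\ref{lma:EulerDecomp(new)} to split $F$ into $t \le 50n^{4/3}\log n$ cycles, equipped with representatives so that consecutive representatives along a cycle are at most $n^{2/3}$ apart and every vertex appears as a representative at most $r_v \le 24n^{2/3}\log^{1/2}n$ times. Each cycle is thereby partitioned into short segments $S^i_k$ (of length at most $n^{2/3}$) whose endpoints are representatives. For each such segment $S^i_k$ from $x^i_k$ to $x^i_{k+1}$, I would form a single path by concatenating an unused $P^{x^i_k}_j$, then $S^i_k$, then an unused $Q^{x^i_{k+1}}_{j'}$; this uses $r_v$ of the $\ell$ many $P^v$ and $r_v$ of the $\ell$ many $Q^v$ at each $v$. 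The remaining $\ell - r_v$ unused $P^v_j$ and $\ell - r_v$ unused $Q^v_j$ are then paired off as $P^v_j + Q^v_{j'}$ to form further $W^+_s$-to-$W^-_s$ paths through $v$. Double-counting gives $\sum_v r_v + \sum_v(\ell - r_v) = n\ell$ such paths, so together with $\mathcal{Q}$ we obtain $\disc(T-H) + n\ell = \disc(T)$ paths; a routine check of starts and ends at each vertex (using $s_u \le s \le \disc_T(u)$ at $u \in W^+_s$ and analogously at $W^-_s$, together with $\disc^\pm_H(v) = 0$ for $v \notin W^+_s \cup W^-_s$) confirms this is a perfect decomposition.

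The main obstacle I expect is the verification that every concatenation above is a simple path rather than a closed walk. The $P^v_j$'s at fixed $v$ are internally vertex-disjoint and so are the $Q^v_j$'s, but different $P^v_j$, $Q^{v'}_{j'}$, and segments $S^i_k$ may share interior vertices. However, each segment has at most $n^{2/3}$ vertices and each absorber at most $m+1 \le 5n^{1/9}$ vertices, so at every selection step only $O(n^{2/3})$ of the $\ell = n^{2/3+\gamma}$ candidates at a given vertex collide with objects already chosen in their interiors. Since $r_v, n^{2/3} \ll \ell$, I would make all the assignments simultaneously by a greedy procedure augmented with a Hall-type argument (Theorem~\ref{thm:Hall}) applied vertex-by-vertex: first matching the $r_v$ segments at $v$ with good $P^v_j$ and good $Q^v_j$ absorbers, and then pairing off the leftover $P$'s and $Q$'s at each vertex. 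Once this is carried out cleanly, $\mathcal{Q}$ together with the $n\ell$ absorber-based paths is the desired perfect decomposition of $T$.
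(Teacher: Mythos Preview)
Your proposal is correct and follows essentially the same absorption strategy as the paper: reserve the absorber paths via Lemma~\ref{lma:shortpathseverywhere(new)}, split $T-H$ into an Eulerian part plus an acyclic part, decompose the Eulerian part into short cycle segments via Lemma~\ref{lma:EulerDecomp(new)}, and then absorb each segment using a $P$-absorber on one end and a $Q$-absorber on the other, with a Hall-type matching to guarantee simplicity of the concatenations.

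The only noteworthy difference is in the order of the matching. The paper first attaches only the $P$-absorbers to the cycle segments (greedily), observing that afterwards every vertex $v$ is the endpoint of \emph{exactly} $\ell$ paths (the $r_v$ segment-extended paths plus the $\ell-r_v$ unused $P^v_j$); it then runs a single bipartite Hall matching at each $v$ between these $\ell$ paths and the $\ell$ paths $Q^v_1,\dots,Q^v_\ell$. Your scheme instead attaches both a $P$ and a $Q$ to every segment at once, and then separately pairs off the leftover $P^v$'s and $Q^v$'s. Both work, and your counting $\sum_v r_v + \sum_v(\ell-r_v)=n\ell$ is exactly right. Your version has the mild advantage that in the leftover step all candidates at $v$ are genuinely vertex-disjoint (so the Hall argument is trivial), at the cost of needing to check one extra disjointness constraint (that the chosen $P$ and $Q$ on a given segment avoid each other, which costs only $O(n^{1/9})$ candidates). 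Either way the numerology $r_v + n^{2/3} + m \ll \ell = n^{2/3+\gamma}$ gives enough room, so there is no gap.
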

\begin{proof}
Let $\ell : = n^{2/3+\gamma} $ and $m:= 4n^{1/9}$ and $s := n^{8/9 + \gamma}$.
Apply Lemma~\ref{lma:shortpathseverywhere(new)} to $T$ so that 
we obtain edge-disjoint paths $P^v_j, Q^v_j$, where $v \in V$ and $j=1, \ldots, \ell$ with the following properties:
\begin{enumerate}[label = {\rm (\roman*)}]
	\item $P^v_j$ is a path of length at most $m$ from $W^+_{s}$ to $v$ and $Q^v_j$ is a path of length at most $m$ from $v$ to $W^-_{s}$;
	\item for each fixed $v \in V$, the paths $P^v_1, \ldots, P^v_{\ell}$ are vertex-disjoint except that they all meet at $v$ and the paths $Q^v_1, \ldots, Q^v_{\ell}$ are vertex-disjoint except that they all meet at $v$;
	\item $\Delta ( \bigcup _{ v,j } (P^v_j \cup Q^v_j)) < s$.
\end{enumerate} 

Call a path of the form $P^v_j$ a \emph{$v$-in-path} and a path of the form $Q^v_j$ a \emph{$v$-out-path}.
Write $H$ for the graph that is the union of these paths and let $T' = T - H$. For each $v \in V$ and $j \le \ell$, each walk $P^v_j \cup Q^v_j$ starts in $W^+_{s}$ and ends in $W^-_s$ and no vertex occurs as a start or end point  more than $s$ times. Therefore we have that  $\disc^{\pm}_H(v) \leq s \leq \disc^{\pm}_T(v)$ for all $v \in W^{\pm}_s$ and $\disc_H(v) = 0$ for all other $v \in V \setminus (W_s^+ \cup W_s^-)$. This means in particular that $\disc(T) = \disc(H) + \disc(T')$ (by Proposition~\ref{pr:disc}). (In fact, $\disc(H) = \ell n = n^{5/3+\gamma}$.)
Let $T_E$ be a maximal Eulerian subgraph of $T'$ and let $T_R = T' - T_E$, where $T_R$ is necessarily acyclic. Thus we have that $T = H \cup T_R \cup T_E$ and $\disc(T) = \disc(H) + \disc(T_R) + \disc(T_E)$ (and of course $\disc(T_E) = 0$).

Finally we show how to decompose $T_E \cup H$ into $\disc(H)$ paths.
Apply Lemma~\ref{lma:EulerDecomp(new)} to $T_E$.
Thus we can decompose $T_E$ into $t \le 50 n^{4/3} \log n$ cycles $C_1, \ldots, C_t$ and for each cycle $C_i$ we can find distinct representatives $x^i_1, x^i_2, \ldots, x^i_{r_i} \in V(C_i)$ (indexed in order) with the following properties:
 \begin{enumerate}[label = {\rm (\roman*$'$)}]
	\item every cycle has at least two representatives, i.e. $r_i \geq 2$ for all $i$;
	\item the interval between consecutive vertices on a cycle $x^i_{j}C_i x^i_{j+1}$ has length at most $n^{2/3}$;
	\item every vertex $v \in V$ occurs as a representative at most $24 n^{2/3}\log^{1/2}n$ times.
\end{enumerate} 
Write $C^{i}_j$ for the interval $x^i_{j}C_i x^i_{j+1}$.
By (i), (ii), (ii$'$) and (iii$'$), for each $i \le t$ and $j \le r_i$, we can greedily find  distinct $P^{x^i_j}_{j'}$ such that each $P^{x^i_j}_{j'} C^i_j$ is a path from $W^+_{s}$ to $x^i_{j+1}$. (Given $C^i_j$, since the paths $P^{x^i_j}_{1}, \ldots, P^{x^i_j}_{\ell}$ are vertex disjoint (except at $x^i_j$), at least $\ell - |C^i_j|$ of these paths avoid $C^i_j$ and since we never use more than $24 n^{2/3}\log^{1/2}n$ of these paths, there is always one available.) 
Hence we have shown that $\bigcup_{v,j} P^v_j \cup T_E$ can be edge-decomposed into $\ell n$ paths $P_1, \dots, P_{\ell n}$ each of length at most $n^{2/3} + m$.
Notice crucially that each vertex $v$ is an end point of exactly $\ell$ paths and at least $\ell - 24 n^{2/3}\log^{1/2}n$ of such paths belong to $\{P^v_j: j \le \ell \}$.

We now extend $P_1, \dots, P_{\ell n}$ using the paths $\{Q^v_j: v \in V,j \le \ell \}$ as follows.
Consider any $v \in V$. 
Let $\mathcal{P}_v$ be the set of $P_i$ with end point~$v$ and let $\mathcal{Q}_v = \{Q^v_j: j \le \ell \}$. 
Clearly, $|\mathcal{P}_v| = \ell = |\mathcal{Q}_v|$.
Let $\mathcal{P}'_v $ (and $\mathcal{Q}'_v)$ be the largest set of vertex-disjoint  (except at $v$) paths of $\mathcal{P}_v$ (and $\mathcal{Q}_v$, respectively).
Thus $|\mathcal{P}'_v| \ge \ell - 24 n^{2/3}\log^{1/2}n$ and $|\mathcal{Q}_v'| = \ell$.
Let $B$ be the auxiliary bipartite graph with vertex partition $\mathcal{P}_v$ and $\mathcal{Q}_v$, where $P \in \mathcal{P}_v$ is joined to $Q \in \mathcal{Q}_v$ if and only if $V(P) \cap V(Q) = \{v\}$. 
For each $Q \in \mathcal{Q}_v$, $|N_B(Q)| \ge |\mathcal{P}'_v| - |V(Q)| \geq \ell - (24 n^{2/3}\log^{1/2}n ) - m \ge \ell/2$.
Similarly, we have $|N_B(P)| \ge  \ell /2$ for each $P \in \mathcal{P}_v$.
Hence $B$ has a perfect matching, which implies $\bigcup\mathcal{P}_v \cup \bigcup\mathcal{Q}_v$ can be decomposed into $\ell$ paths. 
Therefore, $T_E \cup H = \bigcup_{v \in V} (\bigcup\mathcal{P}_v \cup \bigcup\mathcal{Q}_v)$ can be decomposed into $\ell n = \disc(H)$ paths.

Thus we can now write $T = H \cup T' = (H \cup T_E) \cup T_R$ where $\disc(T) = \disc(H) + \disc(T') = \disc(H) + \disc(T_R) = \ell n + \disc(T_R)$ and where $H \cup T_E$ can be decomposed into $ \ell n$ paths and $T_R$ can be decomposed into $\disc(T_R)$ paths (by Proposition~\ref{pr:acyclic}). Hence $T$ can be decomposed into $\disc(T)$ paths.
\end{proof}

\section{Further preliminaries and overview}
\label{se:fprelims}

In this section we provide further preliminaries used in Sections~\ref{se:W} and \ref{se:finaldecomp} as well as an overview of the proof of Theorem~\ref{thm:LPST2}. 

\subsection{Partial decompositions}

We will use the following easy facts about partial decompositions repeatedly. The proofs are straightforward, but we give them for completeness.

\begin{proposition}
\label{pr:PartDecomp1}
Let $D$ be a directed graph and let $\mathcal{P} = \{P_1, \ldots, P_k \}$ be a partial decomposition of $D$ where $P_i$ is a path from $x_i$ to $y_i$. Then the following hold.
\begin{itemize}
\item[(a)] Any $\mathcal{Q} \subseteq \mathcal{P}$ is a partial decomposition of $D$ and a partial decomposition of $D - E(\mathcal{P} \setminus \mathcal{Q})$.
\item[(b)] If $\mathcal{Q}$ is a partial decomposition of $D - E(\mathcal{P})$ then $\mathcal{P} \cup \mathcal{Q}$ is a partial decomposition of $D$ (and hence so is $\mathcal{Q}$).
\item[(c)] If $\pi$ is a permutation of $[k]$ and $\mathcal{Q} = \{Q_1, \ldots, Q_r \}$ is  a set of paths with $r \leq k$ and $Q_i$ is a path from $x_i$ to $y_{\pi(i)}$, then $\mathcal{Q}$ is a partial decomposition of $D$. 
\item[(d)] If $D' \subseteq D$ is an Eulerian subdigraph of $D$ and $\mathcal{Q}$ is a partial decomposition of $D - D'$, then $\mathcal{Q}$ is a partial decomposition of $D$.
\end{itemize}
\end{proposition}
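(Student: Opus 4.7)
The plan is to deduce all four parts from a single identity. For any collection $\mathcal{P} = \{P_1,\dots,P_k\}$ of edge-disjoint paths in $D$ and any $v \in V(D)$, I would write $s_\mathcal{P}(v)$ and $e_\mathcal{P}(v)$ for the number of paths of $\mathcal{P}$ starting and ending at $v$, and observe that each path contributes exactly one out-edge at its start, one in-edge at its end, and one of each at every internal visit to $v$. This yields
\[
\disc_{D - E(\mathcal{P})}(v) \;=\; \disc_D(v) \,-\, s_\mathcal{P}(v) \,+\, e_\mathcal{P}(v).
\]
Combined with the fact that $\disc^+_D(v)$ and $\disc^-_D(v)$ are never both nonzero, and the partial-decomposition bounds $s_\mathcal{P}(v) \le \disc^+_D(v)$, $e_\mathcal{P}(v) \le \disc^-_D(v)$, this forces $\disc^+_D(v)$ to drop by exactly $s_\mathcal{P}(v)$ and $\disc^-_D(v)$ to drop by exactly $e_\mathcal{P}(v)$ upon removal of $E(\mathcal{P})$ (both remaining non-negative).

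With the identity in hand, I would dispatch (a) and (b) together. For (a), $\mathcal{Q} \subseteq \mathcal{P}$ inherits edge-disjointness, and the start/end multisets of $\mathcal{Q}$ are sub-multisets of those of $\mathcal{P}$, so the partial-decomposition bounds in $D$ are immediate. For the second claim of (a), I would apply the identity to $\mathcal{P}\setminus\mathcal{Q}$ and rearrange $s_\mathcal{P}(v) = s_{\mathcal{P}\setminus\mathcal{Q}}(v) + s_\mathcal{Q}(v) \le \disc^+_D(v)$ to get $s_\mathcal{Q}(v) \le \disc^+_{D-E(\mathcal{P}\setminus\mathcal{Q})}(v)$, and symmetrically for $e_\mathcal{Q}(v)$. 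Part (b) is the same computation read in reverse: edge-disjointness of $\mathcal{P}\cup\mathcal{Q}$ is automatic since $E(\mathcal{Q}) \subseteq E(D)\setminus E(\mathcal{P})$, and applying the identity to $\mathcal{P}$ rewrites the hypothesis $s_\mathcal{Q}(v) \le \disc^+_{D-E(\mathcal{P})}(v)$ as $s_\mathcal{P}(v) + s_\mathcal{Q}(v) \le \disc^+_D(v)$, and likewise for ends; the parenthetical ``and hence so is $\mathcal{Q}$'' then follows by applying (a) to $\mathcal{P}\cup\mathcal{Q}$.

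Parts (c) and (d) will be essentially immediate. For (c), the start-point multiset of $\mathcal{Q}$ is $\{x_1,\dots,x_r\}$, a sub-multiset of $\{x_1,\dots,x_k\}$, and the end-point multiset is $\{y_{\pi(1)},\dots,y_{\pi(r)}\}$, a sub-multiset of $\{y_1,\dots,y_k\}$ because $\pi$ is a permutation of $[k]$; so every bound satisfied by $\mathcal{P}$ transfers to $\mathcal{Q}$. For (d), $D'$ being Eulerian means $d^+_{D'}(v) = d^-_{D'}(v)$ at every $v$, hence $\disc_{D-D'}(v) = \disc_D(v)$, and the inequalities defining a partial decomposition are literally the same in $D$ as in $D-D'$. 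There is no genuine obstacle; the one point to handle with a little care is the sign of $\disc_D(v)$ in (a) and (b), which I would treat by a short case split on $\disc_D(v) \ge 0$ versus $\disc_D(v) \le 0$, the two cases being symmetric under $+ \leftrightarrow -$.
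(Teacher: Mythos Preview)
Your proposal is correct and is essentially the same argument as the paper's: the paper also introduces the start/end counts (denoted $p^+_\mathcal{A}(x)$, $p^-_\mathcal{A}(x)$), uses the same identity $\disc_{D-E(\mathcal{P})}(x) = \disc_D(x) - p^+_\mathcal{P}(x) + p^-_\mathcal{P}(x)$ together with a case split on the sign of $\disc_D(x)$ for (a) and (b), and dispatches (c) and (d) by the same one-line observations you give. Your only cosmetic difference is isolating the identity up front rather than invoking it inline in each part.
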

\begin{proof}
For any collection of paths $\mathcal{A} = \{A_1, \ldots, A_t \}$ where $A_i$ is a path in a digraph $D$ and $x \in V(D)$, write $p^+_{\mathcal{A}}(x)$ for the number of paths in $\mathcal{A}$ that start at $x$ and  $p^-_{\mathcal{A}}(x)$ for the number of paths in $\mathcal{A}$ that end at $x$.

(a) The fact that $\mathcal{Q}$ (and $\mathcal{P} \setminus \mathcal{Q}$) is a partial decomposition of $D$ is immediate. For the second part note that for any $x \in V(D)$, if $\disc_D(x) \geq 0$ then
\begin{align*}
\disc^+_{D - E(\mathcal{P} \setminus \mathcal{Q})}(x) 
&= \disc^+_D(x) - p^+_{\mathcal{P} \setminus \mathcal{Q}}(x) 
+ p^-_{\mathcal{P} \setminus \mathcal{Q}}(x) \\
&\geq p^+_{\mathcal{P}}(x) - p^+_{\mathcal{P} \setminus \mathcal{Q}}(x) + 0 
= p^+_{\mathcal{Q}}(x),
\end{align*}
where the inequality holds since $\mathcal{P}$ is a partial decomposition of $D$. A similar statement holds if $\disc_D(x) \leq 0$.

(b) Note that for any $x \in V(D)$, if $\disc_D(x) \geq 0$ then
\[
 \disc_D(x) - p^+_{\mathcal{P}}(x) + p^-_{\mathcal{P}}(x)
= \disc^+_{D - E(\mathcal{P} )}(x) 
  \geq p^+_{\mathcal{Q}}(x)
\] 
and a similar statement holds if $\disc_D(x) \leq 0$. Rearranging gives $\disc_D(x) \geq p^+_{\mathcal{P}}(x) + p^+_{\mathcal{Q}}(x) = p^+_{\mathcal{P} \cup \mathcal{Q}}(x)$.

(c) Here we note that $p^+_{\mathcal{P}}(x) \geq p^+_{\mathcal{Q}}(x)$ and $p^-_{\mathcal{P}}(x) \geq p^-_{\mathcal{Q}}(x)$ for all $x \in V(D)$.
 
(d) Here we note that $\disc_D(x) = \disc_{D - E(D')}(x)$ for all $x \in V(D)$.
\end{proof}

\begin{proposition}
\label{pr:PartDecomp2}
Let $D$ be a directed graph and suppose there is a partition of $V(D)$ into sets $A^+, A^-, R$ such that $E_D(R, A^+) = E_D(A^-, R) = E(D[A^+ \cup A^-]) = \emptyset$. Then the following holds.
\begin{itemize}
\item[(a)] If $\mathcal{P}= \{P_1, \ldots, P_r \}$ is a partial decomposition of $D[R]$, then there is a partial decomposition $\mathcal{P}' = \{P_1', \ldots, P_r' \}$ of $D$ such that $V(P_i') \cap R = V(P_i)$ for all $i = 1,\ldots, r$.
\item[(b)] If there is a perfect decomposition of $D[R]$ then there is a perfect decomposition of $D$. 
\item[(c)] If in addition we assume that $\disc_D(v) \geq 0$ for every $v \in N^+_D(A^+)$ and $\disc_D(v) \leq 0$ for every $v \in N^-_D(A^-)$ and $N^+_D(A^+) \cap N^-_D(A^-) = \emptyset$ then  $\disc(D[R]) = \disc(D)$.
\end{itemize}
\end{proposition}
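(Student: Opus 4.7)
The proof breaks into three parts, which I will sketch in turn.

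For (a), the plan is to decide, for each path $P_i$ of $\mathcal{P}$, whether to prepend a vertex $a_i \in A^+$ adjacent to the start of $P_i$ and whether to append a vertex $b_i \in A^-$ adjacent from the end of $P_i$. Since $A^+$-vertices have no in-edges and $A^-$-vertices no out-edges, these are the only possible extensions compatible with $V(P_i') \cap R = V(P_i)$. For each $v \in R$ set $\alpha(v) = p^+_{\mathcal{P}}(v)$, $e^+(v) = |N^-_D(v) \cap A^+|$, and let $\beta(v)$ be the number of $\mathcal{P}$-paths starting at $v$ chosen to be prepended. For $\mathcal{P}'$ to be a partial decomposition of $D$ one needs $\max(0, \alpha(v) - \disc^+_D(v)) \leq \beta(v) \leq \min(\alpha(v), e^+(v))$. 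Using $\alpha(v) \leq \disc^+_{D[R]}(v)$ together with the identity $\disc_D(v) = \disc_{D[R]}(v) + |N^+_D(v) \cap A^-| - e^+(v)$, a short case analysis shows this interval is non-empty, so a suitable $\beta(v)$ exists; a symmetric argument handles the end extensions, and any choice of distinct in-edges (respectively out-edges) at each $v$ gives the required $\mathcal{P}'$. The degree bounds at $A^\pm$-vertices follow automatically, since each such incident edge is used at most once.

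For (b), the plan is to combine (a) with Propositions~\ref{pr:disc},~\ref{pr:acyclic}, and~\ref{pr:PartDecomp1}(b). Apply (a) to the given perfect decomposition $\mathcal{P}$ of $D[R]$ to obtain a partial decomposition $\mathcal{P}'$ of $D$ with $|\mathcal{P}'| = \disc(D[R])$. Then $D - E(\mathcal{P}')$ consists only of edges in $E_D(A^+,R) \cup E_D(R,A^-)$ and is hence acyclic, so by Proposition~\ref{pr:acyclic} it admits a perfect decomposition $\mathcal{Q}$ with $|\mathcal{Q}| = \disc(D - E(\mathcal{P}'))$. Setting $H = \bigcup \mathcal{P}'$, a direct calculation gives $\disc_H(v) = p^+_{\mathcal{P}'}(v) - p^-_{\mathcal{P}'}(v)$ at every vertex, hence $\disc^\pm_H(v) \leq \disc^\pm_D(v)$, so Proposition~\ref{pr:disc} yields $\disc(D) = \disc(H) + \disc(D - E(\mathcal{P}'))$. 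The crucial observation is that a perfect decomposition of $D[R]$ forces $\alpha(v) = 0$ or $\gamma(v) = 0$ at every $v \in R$ (where $\gamma(v) = p^-_{\mathcal{P}}(v)$); combined with the fact that $A^+$-vertices never end paths and $A^-$-vertices never start them, this ensures $p^+_{\mathcal{P}'}(v) = 0$ or $p^-_{\mathcal{P}'}(v) = 0$ at every $v$, so $\disc(H) = \sum_v p^+_{\mathcal{P}'}(v) = |\mathcal{P}'|$. Combining the identities gives $|\mathcal{P}' \cup \mathcal{Q}| = \disc(D)$, and Proposition~\ref{pr:PartDecomp1}(b) certifies $\mathcal{P}' \cup \mathcal{Q}$ is the desired perfect decomposition of $D$.

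For (c), the plan is to compute $|\disc_D(v)|$ vertex by vertex. The hypothesis $N^+_D(A^+) \cap N^-_D(A^-) = \emptyset$ says no $v \in R$ simultaneously receives an edge from $A^+$ and sends an edge to $A^-$. For $v \in R \cap N^+_D(A^+)$, the identity $\disc_D(v) = \disc_{D[R]}(v) - |N^-_D(v) \cap A^+|$ with $\disc_D(v) \geq 0$ gives $|\disc_D(v)| = |\disc_{D[R]}(v)| - |N^-_D(v) \cap A^+|$; symmetrically for $v \in R \cap N^-_D(A^-)$, and $\disc_D(v) = \disc_{D[R]}(v)$ for the remaining vertices of $R$. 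Summing over $V(D)$ and using $\sum_{v \in A^+}|\disc_D(v)| = |E_D(A^+,R)|$ and $\sum_{v \in A^-}|\disc_D(v)| = |E_D(R,A^-)|$, the $A^\pm$-contributions cancel against the correction terms arising in the $R$-sum, leaving $2\disc(D) = 2\disc(D[R])$.

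The main obstacle is establishing the identity $\disc(H) = |\mathcal{P}'|$ in (b): it relies critically on $\mathcal{P}$ being perfect rather than merely partial, since only then is each $R$-vertex single-signed with respect to acting as a start or an end of $\mathcal{P}$-paths.
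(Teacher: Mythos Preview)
Your proof is correct in all three parts, but the approach differs from the paper's throughout. For (a), the paper proceeds by induction on the number of paths: at each step it greedily prepends an $A^+$-edge to the start (and appends an $A^-$-edge to the end) of the current path \emph{whenever one is still available}, then checks that the resulting single path is a partial decomposition of what remains. Your approach is instead static: you decide globally, for each $v\in R$, exactly how many of the paths starting there should be prepended, and verify the relevant interval for $\beta(v)$ is nonempty. Both work; the paper's greedy step is shorter to write, while yours makes the feasibility constraint transparent. For (c), the paper again uses induction (on the number of edges between $A^\pm$ and $R$), removing one such edge and checking the hypotheses persist; your direct vertex-by-vertex sum is arguably cleaner. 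For (b), the paper's argument is more economical than yours: once $\mathcal{P}'\cup\mathcal{Q}$ is a partial decomposition of $D$ that covers every edge, it is automatically perfect (for any full edge-decomposition into paths one has $p^+(v)-p^-(v)=\disc_D(v)$, and the partial-decomposition bounds then force $p^+(v)=\disc^+_D(v)$), so the explicit identity $\disc(H)=|\mathcal{P}'|$ via Proposition~\ref{pr:disc} is unnecessary. Relatedly, your closing remark is slightly off: the single-signedness you use to derive $\disc(H)=|\mathcal{P}'|$ already holds for any partial decomposition of $D[R]$, not just a perfect one; what genuinely requires perfection is that $\mathcal{P}'$ exhausts $E(D[R])$, so that the leftover graph is acyclic.
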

\begin{proof}
%
%
(a) This is easily proved by induction on the number of paths; we give the details for completeness. 
By induction we will find paths $P_1' \ldots, P_{r}'$ such that each path $\{ P_i' \}$ is a partial decomposition of $D_i := D - (P_1' \cup \cdots \cup P_{i-1}')$ for $i = 1, \ldots, r$ and $V(P_i) = V(P_i') \cap R$. By $r$ applications of Proposition~\ref{pr:PartDecomp1}(b), $\{P_1' \ldots, P_r'\}$ is a partial decomposition of $D$ with the desired properties.

Suppose we have found the paths $P_1' \ldots, P_{k-1}'$ as described above.
Then $\{P_k\}$ is a partial decomposition of $D[R] - (P_1 \cup \cdots \cup P_{k-1}) = D_k[R]$. Write $P_k = xP_ky$. If there is some edge $a^+x \in E(D_k)$ with $a^+ \in A^+$ then append it to $P_k$ and if there is some edge $ya^-\in E(D_k)$ with $a^- \in A^-$ then append it to $P_k$ and  write $P_k'$ for the resulting path. Let $P_k' = x'P_ky'$; we show that $\disc_{D_k}(x')>0>\disc_{D_k}(y')$ proving that $\{P_k'\}$ is a partial decomposition of $D_k$.

By symmetry it is sufficient to show $\disc_{D_k}(x')>0$.  
If $x' = a^+ \in A^+$ then this is certainly the case. If $x' = x$ then   
\[
\disc_{D_k}(x) \geq \disc_{D_k[R]}(x)>0. 
\]
The first inequality holds because there is no edge 
in $D_k$ from $A^+$ to $x$ (nor from $A^-$ to $x$ from the statement of the lemma). The second inequality holds 
because $P_k$ starts at $x$ and $\{ P_k \}$ is a partial decomposition of $D_k[R]$.

(b) From (a) we can extend our perfect decomposition of $D[R]$ to a partial decomposition $\mathcal{Q}_1$ of $D$ that uses every edge of $D[R]$. The remaining digraph $D - E(\mathcal{Q}_1) \subseteq D - E(D[R])$ is acyclic so has a perfect decomposition $\mathcal{Q}_2$ by Proposition~\ref{pr:acyclic}. Therefore $\mathcal{Q}_1 \cup \mathcal{Q}_2$ is a perfect decomposition of $D$ by Proposition~\ref{pr:PartDecomp1}(b).

(c) This is proved by induction on the number of edges between $A^+ \cup A^-$ and $R$. If $D$ has no edges between $A^+ \cup A^-$ and $R$ then we are done. For any edge $e = a^+r$ with $a^+ \in A^+$ and $r \in R$, $\disc_{D-e}(r) > \disc_D(r) \geq 0$. Furthermore $r \not\in N_{D-e}^-(A^-)$ because $N^+_D(A^+) \cap N^-_D(A^-) = \emptyset$. It is easy to check that the conditions in (c) are satisfied for $D-e$ so we can assume by induction that $\disc(D-e) = \disc(D[R])$. Also, we see that adding the edge $e$ back to $D-e$ reduces $\disc(r)$ by $1$ and increases $\disc(a^+)$ by $1$ so that $\disc(D) = \disc(D-e) = \disc(D[R])$. The case when $e =ra^-$ for some $r \in R$ and some $a^- \in A^-$ holds similarly. 
\end{proof}

\subsection{Robust expanders}

Here we introduce the basic notions of robust expansion and their consequences, which we will use in Sections~\ref{se:W} and~\ref{se:finaldecomp}. Most of this can be found in \cite{KO,KO2}

We give the definition of robust expander here for completeness. We will not use the definition directly, but only use some of the consequences given below.

\begin{definition}
An $n$-vertex digraph $D$ is a robust $(\nu, \tau)$-outexpander if for every $S \subseteq V(D)$ with $\tau n \leq |S| \leq (1- \tau)n$ there is some set $T \subseteq V(D)$ with $|T| \geq |S| + \nu n$ such that every vertex in $T$ has at least $\nu n$ in-neighbours in $|S|$.
%
%
\end{definition}

It turns out that sufficiently dense oriented graphs are robust expanders.
\begin{lemma}[{\cite[Lemma 13.1]{KO}}]
 \label{lma:outexpander}
Let $0 < 1/n \ll \nu \ll \tau \ll \eps$.
Suppose that $D$ is an oriented graph on $n$ vertices with $\delta^0 (D) \ge (3/8+\eps) n $.
Then $D$ is a robust $(\nu,\tau)$-outexpander.
\end{lemma}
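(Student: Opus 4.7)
My plan is to fix $S \subseteq V(D)$ with $\tau n \le |S| \le (1-\tau)n$, set $T := \{v \in V(D) : d^-_D(v, S) \ge \nu n\}$, abbreviate $m := (3/8+\eps)n$, and prove $|T| \ge |S| + \nu n$ by a case analysis on $|S|$. The engine is the double count
\begin{equation*}
e_D(S, T) \;\ge\; \sum_{u \in S} d^+_D(u) - |V(D) \setminus T|\cdot \nu n \;\ge\; m|S| - \nu n^2,
\end{equation*}
which combines the out-degree hypothesis $\delta^+(D) \ge m$ with the definition of $T$.

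For $|S| \le (3/8+\eps/4)n$ (``small''), I pair the estimate with the crude $e_D(S, T) \le |S||T|$ to obtain $|T| \ge m - \nu n^2/|S| \ge (3/8+\eps/2)n \ge |S| + \nu n$, using $\nu \ll \tau \ll \eps$. For $|S| \ge (5/8-\eps/4)n$ (``large''), any $v \notin T$ would satisfy $d^-_D(v, V(D)\setminus S) \ge m - \nu n > (3/8+\eps/4)n \ge |V(D)\setminus S|$, which is impossible; hence $T = V(D)$ and $|T| = n \ge |S| + \nu n$.

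The crux is the \emph{middle range} $(3/8+\eps/4)n < |S| < (5/8-\eps/4)n$, in which the estimate paired with $e_D(S,T) \le |S||T|$ only gives $|T| \ge m$, short of $|S| + \nu n$ when $|S|$ is near $n/2$. My plan is to introduce the dual set $T^- := \{v : d^+_D(v, S) \ge \nu n\}$ and rerun the estimate on edges into $S$, yielding $|T^-| \ge m$. Any $v \notin T \cup T^-$ would have $d^+_D(v, S) + d^-_D(v, S) < 2\nu n$, but $d^+_D(v) + d^-_D(v) \ge 2m$ combined with the oriented-graph bound $d^+_D(v, V(D)\setminus S) + d^-_D(v, V(D)\setminus S) \le |V(D)\setminus S|$ forces this sum to be at least $2m + |S| - n$, which exceeds $2\nu n$ throughout the middle range; hence $T \cup T^- = V(D)$, and so it suffices to show $|T^- \setminus T| \le |V(D)\setminus S| - \nu n$.

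The main obstacle is precisely this final inequality. To attack it, I would double count $e_D(V(D)\setminus S, T^- \setminus T)$: from below, each $v \in T^- \setminus T$ has $d^-_D(v, V(D)\setminus S) > m - \nu n$, so this count is at least $|T^- \setminus T|(m - \nu n)$; from above, applying the orientation constraint $d^+_D(u) + d^-_D(u) \le n - 1$ together with the lower bound $d^+_D(u, V(D)\setminus S) \ge m - |S|$ for $u \in V(D)\setminus S$ -- which pins many out-edges of $V(D)\setminus S$ inside $V(D)\setminus S$ rather than in $T^- \setminus T$ -- should give an upper bound that closes the arithmetic provided $\nu \ll \tau \ll \eps$. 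This is the step where the sharpness of the $3n/8$ threshold for oriented graphs is genuinely invoked, and it is the hardest part of the argument; handling it essentially amounts to a global rather than per-vertex use of the orientation condition.
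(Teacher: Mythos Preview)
The paper does not prove this lemma at all: it is quoted verbatim as \cite[Lemma~13.1]{KO} and used as a black box. So there is no ``paper's proof'' to compare against; I can only assess your attempt on its own merits.

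Your small and large cases are correct and cleanly written. The reduction in the middle range to $T\cup T^- = V(D)$ is also fine (and this is indeed where the oriented-graph hypothesis enters, via $d^+(v,V\setminus S)+d^-(v,V\setminus S)\le |V\setminus S|$). The gap is in your final step. You propose to upper-bound $e_D(V\setminus S,\,T^-\setminus T)$ using the inequality $d^+_D(u,V\setminus S)\ge m-|S|$ for $u\in V\setminus S$. But throughout essentially the entire middle range you have $|S|>(3/8+\eps/4)n$, and once $|S|\ge m=(3/8+\eps)n$ this lower bound is \emph{nonpositive}, hence vacuous. Even on the thin sliver $(3/8+\eps/4)n<|S|<m$ where it is positive, it only says that some out-edges of $u$ land in $V\setminus S$; since $T^-\setminus T$ may itself lie largely inside $V\setminus S$, this does not prevent those edges from going to $T^-\setminus T$. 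In short, the proposed upper bound on $e_D(V\setminus S, T^-\setminus T)$ does not follow from the ingredients you list, and the arithmetic does not close.

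Concretely: your lower bound gives $e_D(V\setminus S,T^-\setminus T)\ge |T^-\setminus T|(m-\nu n)$, while the only generic upper bound your tools yield is of the form $|V\setminus S|\cdot\Delta^+(D)\le |V\setminus S|(n-1-m)$; combining these gives $|T^-\setminus T|\le |V\setminus S|\cdot\frac{n-1-m}{m-\nu n}\approx |V\setminus S|\cdot\frac{5/8-\eps}{3/8+\eps}$, which is larger than $|V\setminus S|$, not smaller. A genuinely different (and more global) count is needed here---this is exactly the place where the $3/8$ threshold is tight, and one must exploit the orientation constraint across the whole of $S$ and $V\setminus T$ simultaneously rather than vertex-by-vertex. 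As written, the middle-range argument is a plan rather than a proof.
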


The notion of robust expansion was developed to help solve problems on Hamilton cycles. Here are two of the main results.

\begin{theorem}[{\cite[Corollary~6.9]{klos}}]
\label{thm:robexp}
Let $0 < 1/n \ll \eps \ll \nu \ll \tau \ll \delta$. 
Suppose that $D$ is a robust $(\nu,\tau)$-outexpander on $n$ vertices with $\delta^0(D) \geq \delta n$.
Let $a,b \in V(D)$.
Then $D$ contains a Hamilton path from $a$ to~$b$.
\end{theorem}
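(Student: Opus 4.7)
The plan is to reduce the Hamilton path problem in $D$ to a Hamilton cycle problem in a suitably modified digraph $D'$, then invoke the known Hamilton cycle theorem for robust outexpanders of linear minimum semidegree (the main result of \cite{KO}, which underlies \cite[Corollary~6.9]{klos}). Form $D'$ on vertex set $(V(D) \setminus \{a, b\}) \cup \{v^*\}$ by identifying $a$ and $b$ into a single new vertex $v^*$, keeping only the out-edges of $a$ and the in-edges of $b$ at $v^*$; that is, $N^+_{D'}(v^*) = N^+_D(a)$ and $N^-_{D'}(v^*) = N^-_D(b)$, while edges not incident to $a$ or $b$ are inherited unchanged and the in-edges of $a$ as well as the out-edges of $b$ are discarded. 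Any Hamilton cycle in $D'$ must traverse $v^*$ exactly once, entering via some edge $yv^*$ (corresponding to $yb \in E(D)$) and leaving via some edge $v^*x$ (corresponding to $ax \in E(D)$); unfolding $v^*$ back into $a$ and $b$ therefore yields a Hamilton path of $D$ from $a$ to $b$.

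The next step is to check that $D'$ satisfies the hypotheses of the Hamilton cycle theorem. Each vertex of $V(D) \setminus \{a,b\}$ loses at most one in-edge and at most one out-edge in passing from $D$ to $D'$, while $v^*$ has in-degree $d_D^-(b) \geq \delta n$ and out-degree $d_D^+(a) \geq \delta n$; hence $\delta^0(D') \geq \delta n - 1 \geq (\delta/2)(n-1)$. For the robust outexpansion, given any $S \subseteq V(D')$ of the relevant size, I would lift it to $S' \subseteq V(D)$ by replacing $v^*$ (if present) with $\{a,b\}$, apply the $(\nu,\tau)$-outexpansion of $D$ to $S'$ to produce a set $T'$ with $|T'| \geq |S'| + \nu n$ each of whose vertices has at least $\nu n$ in-neighbours in $S'$, and then project back to $V(D')$ by mapping $a,b \in T'$ to $v^*$. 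A short case analysis (splitting on whether $v^* \in S$ and on which of $a,b$ lie in $T'$) shows that the losses are bounded by a constant number of vertices and by at most one in-neighbour per surviving element of $T$, so $D'$ is a robust $(\nu/3, 2\tau)$-outexpander on $n-1$ vertices; these parameters still respect the hierarchy $1/(n-1) \ll \eps \ll \nu/3 \ll 2\tau \ll \delta/2$.

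Applying the Hamilton cycle theorem for robust outexpanders with linear minimum semidegree to $D'$ then produces a Hamilton cycle of $D'$, which unfolds to the required Hamilton path in $D$. The main obstacle in this plan is the bookkeeping required to transfer the robust outexpansion property cleanly through the identification of $a$ and $b$ and the deletion of certain incident edges: one must separately handle the cases $v^* \in S$ and $v^* \notin S$, and one must be careful when $a \in T'$ but $b \notin T'$, since then the in-neighbours of $a$ in $S'$ do not survive at $v^*$ in $D'$ and $v^*$ should be omitted from $T$. Given the substantial slack in the hierarchy $\eps \ll \nu \ll \tau \ll \delta$, the resulting constant-factor losses are easily absorbed and the reduction goes through.
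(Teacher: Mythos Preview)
The paper does not prove this theorem; it is quoted verbatim as \cite[Corollary~6.9]{klos} and used as a black box. Your reduction is the standard and correct way to derive the Hamilton path statement from the Hamilton cycle theorem for robust outexpanders (originally due to K\"uhn, Osthus and Treglown): identifying $a$ and $b$ into $v^*$, keeping only the out-edges of $a$ and the in-edges of $b$, and then unfolding a Hamilton cycle of $D'$ into a Hamilton path of $D$. The verification that $D'$ remains a robust outexpander with linear minimum semidegree goes through exactly as you sketch, with the constant losses absorbed by the hierarchy. Two minor points worth tidying: if $ab \in E(D)$ the construction creates a loop at $v^*$, which should be discarded explicitly (costing at most one more in the degree bound); and the statement does not formally exclude $a=b$, in which case your identification is undefined and one should simply invoke the Hamilton cycle theorem directly. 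Neither affects the substance of the argument.
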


\begin{theorem}[\cite{KO}]
\label{thm:kelly}
Let $0<1/n \ll \nu \ll  \tau \ll \delta$. 
Suppose that $D$ is an $r$-regular oriented graph with $r \ge  \delta n$ and a robust $(\nu, \tau )$-outexpander.
Then $E(D)$ can be decomposed into $r$ edge-disjoint Hamilton cycles. 
\end{theorem}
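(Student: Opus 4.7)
My plan is to follow the absorbing paradigm of Kühn and Osthus: construct a small ``absorbing'' subgraph $G^{\mathrm{abs}} \subseteq D$ with strong decomposition properties, use it to peel off almost all of $D$ into Hamilton cycles by iteration of Theorem~\ref{thm:robexp}, and then close up the remainder using $G^{\mathrm{abs}}$. Fix constants with $1/n \ll \varepsilon \ll \nu' \ll \nu \ll \tau \ll \delta$ and write $r$ for the regularity degree.

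The first step is to set aside a random sparse subgraph $R \subseteq D$ of in- and out-degree approximately $r_R := n^{1/2}$ at each vertex, chosen by including each edge independently with probability $r_R/r$. A Chernoff-type concentration argument shows that, with positive probability, $D - R$ is again regular up to $\varepsilon n$ error and is still a robust $(\nu', \tau)$-outexpander with minimum semidegree at least $(\delta - 2\varepsilon) n$; I would then equalise degrees by deleting a few further edges and absorbing them into $R$. Within $R$, I would construct a ``robustly decomposable'' regular spanning subgraph $G^{\mathrm{abs}}$ of some small degree $d_{\mathrm{abs}} = O(\log n)$, with the absorbing property that whenever $H$ is any $d_{\mathrm{abs}}$-regular subdigraph of $D$ edge-disjoint from $G^{\mathrm{abs}}$ (with $H$ having enough structure to be produced by the nibble), $G^{\mathrm{abs}} \cup H$ admits a Hamilton decomposition. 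The construction of $G^{\mathrm{abs}}$ is based on combining short ``chord sequences'' and shifted walks with a random near-regular skeleton, exploiting robust expansion so that any sparse deficiency in $H$ can be rerouted through $G^{\mathrm{abs}}$ into Hamilton cycles.

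Once $G^{\mathrm{abs}}$ is fixed, the second step is the approximate decomposition. Let $D^* := D - G^{\mathrm{abs}}$, which is $(r - d_{\mathrm{abs}})$-regular (up to minor adjustments already absorbed into $R$) and still a robust $(\nu', \tau)$-outexpander of semidegree at least $(\delta/2) n$. I would iteratively apply Theorem~\ref{thm:robexp} to extract Hamilton cycles one by one: so long as the current graph has semidegree at least $(\delta/3) n$, robust expansion is preserved under the small perturbation of removing one Hamilton cycle and we can find another. This process yields at least $r - d_{\mathrm{abs}} - d'$ Hamilton cycles for some $d' = o(d_{\mathrm{abs}})$, leaving a leftover $H'$ of maximum semidegree at most $d'$. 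The third step is to complete $H'$ to a $d_{\mathrm{abs}}$-regular digraph $H$ using reserved edges of $R$ kept aside for this purpose, and then invoke the absorbing property of $G^{\mathrm{abs}}$ to decompose $G^{\mathrm{abs}} \cup H$ into the remaining $d_{\mathrm{abs}}$ Hamilton cycles.

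The main obstacle, by a wide margin, is the construction of $G^{\mathrm{abs}}$ with its absorbing property: it is not enough to absorb a single leftover edge at a time, because the leftover $H$ is a global regular structure and we need to turn it all into Hamilton cycles simultaneously. The usual way to handle this is to build, for each potential ``chord'' that might have to be rerouted, a large family of internally disjoint short walks through $G^{\mathrm{abs}}$ that can take its place, and to use a Hall-type matching (in the spirit of Theorem~\ref{thm:Hall}) to select a consistent set of replacements. Ensuring that the pieces fit together into spanning Hamilton cycles (rather than short cycles) is precisely where robust expansion of $D$ and the Hamilton-connectivity consequence Theorem~\ref{thm:robexp} are repeatedly used.
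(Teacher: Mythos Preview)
The paper does not prove this statement; Theorem~\ref{thm:kelly} is quoted from \cite{KO} and used as a black box (in Theorem~\ref{thm:KellyPaths} and Lemma~\ref{lma:EulerDecomp}). There is therefore no proof in the paper to compare your attempt against.

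That said, your sketch, while it follows the absorbing-plus-approximate-decomposition architecture of \cite{KO}, has a genuine gap in the second step. You claim that iterating Theorem~\ref{thm:robexp} to peel off Hamilton cycles leaves a remainder $H'$ of semidegree $d' = o(d_{\mathrm{abs}}) = o(\log n)$. This cannot work: each removed Hamilton cycle decreases the semidegree by exactly $1$, and the hypotheses of Theorem~\ref{thm:robexp} (robust $(\nu',\tau)$-outexpansion together with $\delta^0 \ge (\delta/3)n$) fail once the semidegree drops below a fixed constant fraction of $n$, long before it reaches $o(\log n)$. So naive iteration halts with a leftover of degree $\Theta(n)$, which your $O(\log n)$-degree absorber is far too small to swallow. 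In the actual argument of \cite{KO}, both ingredients live at linear scale: the robustly decomposable graph $G^{\mathrm{abs}}$ has degree $\varepsilon' n$ for some small constant $\varepsilon'$, and the approximate decomposition of $D - G^{\mathrm{abs}}$ is not obtained by naive iteration of a Hamilton-connectivity lemma but by a much more elaborate scheme (a regularity-type partition, random slicing of the edge set into many thin almost-regular robust expanders handled in parallel, and careful bookkeeping) which genuinely drives the leftover degree down to a still smaller constant fraction of $n$ that the absorber can handle.
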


An immediate consequence of the above is the following path decomposition result, which we use right at the end of the paper.
\begin{theorem}
\label{thm:KellyPaths}
Let $0 < 1/n \ll 1$ and let $D$ be an oriented graph with a vertex partition $V(D) = X^+ \cup X^- \cup X^0$ with $|X^+| = |X^-| = d \geq 3n/7$  such that 
\begin{align*}
\disc_{D}(v) = 
\begin{cases}
1 &\text{if } v \in X^+; \\
0 &\text{if } v \in X^0; \\
-1 &\text{if } v \in X^-;
\end{cases}
\:\:\:\text{ and }\:\:\:
d_{D}(v) = 
\begin{cases}
2d-1 &\text{if } v \in X^+; \\
2d &\text{if } v \in X^0; \\
2d-1 &\text{if } v \in X^-.
\end{cases}
\end{align*}
Then $D$ has a perfect decomposition.
\end{theorem}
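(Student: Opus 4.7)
The plan is to reduce the perfect decomposition problem to a Hamilton decomposition of an auxiliary $d$-regular oriented graph, to which Theorem~\ref{thm:kelly} applies. The key construction is a single extra vertex $w$ that absorbs all the excess: form $D'$ on vertex set $V(D)\cup\{w\}$ by taking $D$ together with the edge $uw$ for every $u\in X^-$ and the edge $wv$ for every $v\in X^+$. Since $w$ is not incident to any edge of $D$, the result $D'$ is still an oriented graph (on $n+1$ vertices).

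First I would verify that $D'$ is $d$-regular. The given data $\disc_D$ and $d_D$ force $d^+_D(v)=d$ for $v\in X^+\cup X^0$ and $d^+_D(v)=d-1$ for $v\in X^-$, and symmetrically for in-degrees. Adding the new edges therefore bumps every $v\in V(D)$ up to out- and in-degree exactly $d$ in $D'$, while $w$ itself has $d^+_{D'}(w)=d^-_{D'}(w)=d$. Since $d\ge 3n/7 > (3/8+\eps)(n+1)$ for a sufficiently small $\eps>0$ and sufficiently large $n$ (the slack is $3/7-3/8=3/56$), Lemma~\ref{lma:outexpander} gives that $D'$ is a robust $(\nu,\tau)$-outexpander for some $1/n\ll\nu\ll\tau\ll 1$.

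I would then apply Theorem~\ref{thm:kelly} to $D'$ to decompose $E(D')$ into $d$ edge-disjoint Hamilton cycles $H_1,\ldots,H_d$. Each $H_i$ visits $w$ exactly once, traversing one incoming edge $u_iw$ (with $u_i\in X^-$) followed by one outgoing edge $wv_i$ (with $v_i\in X^+$). Deleting $w$ from $H_i$ yields a Hamilton path $P_i$ of $D$ from $v_i$ to $u_i$. The paths $P_1,\ldots,P_d$ are edge-disjoint since the $H_i$ are, and together they cover all of $E(D)$: the only edges in $E(D')\setminus E(D)$ are the $2d$ edges incident to $w$, and exactly two such edges lie in each $H_i$. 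Since $\disc(D)=d$ matches the number of paths produced, $\{P_1,\ldots,P_d\}$ is the desired perfect decomposition.

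The real content of the argument is verifying that Theorem~\ref{thm:kelly} applies to $D'$; everything else is bookkeeping. The hypothesis $d\ge 3n/7$ supplies precisely the slack over the $3n/8$ threshold in Lemma~\ref{lma:outexpander} that is needed to make the reduction go through.
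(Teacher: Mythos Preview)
Your proposal is correct and follows essentially the same approach as the paper: add a single vertex with out-neighbourhood $X^+$ and in-neighbourhood $X^-$, observe the resulting oriented graph is $d$-regular with $d>3n/7$, apply Lemma~\ref{lma:outexpander} and Theorem~\ref{thm:kelly} to obtain a Hamilton decomposition, and delete the new vertex from each cycle to recover the required $d$ paths. The paper's proof is just a terser version of what you wrote.
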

\begin{proof}
Fix $\nu, \tau$ such that $1/n \ll \nu \ll \tau \ll 1$.
We form $D'$ by adding a vertex $y$ such that $N^+_{D'}(y) = X^+$ and $N^-_{D'}(y) = X^-$. Then $D'$ is a regular oriented graph with in- and outdegree $d > 3/7n$ and
so is a robust $(\nu, \tau)$-outexpander by Lemma~\ref{lma:outexpander}. Thus it
  has an edge decomposition into Hamilton cycles $H_1, \ldots, H_d$ by Theorem~\ref{thm:kelly}. Taking $P_i$ to be the path $H_i - y$, $\mathcal{P} = \{ P_1, \ldots, P_d \}$ gives a perfect decomposition of $D$. 
\end{proof}

Robust expanders are highly connected as one would expect and so we can find (many) short paths between any pair of vertices. This is made precise in the following three lemmas.
\begin{lemma}
[see e.g.\ {\cite[Lemma 9]{LP}}]
 \label{lma:shortpath}
Let $n \in \mathbb{N}$ and $0 < 1/n \ll \nu \ll \tau \ll \delta \leq 1$.
Suppose that $D$ is a robust $(\nu,\tau)$-outexpander on $n$ vertices with $\delta^0(D) \geq \delta n$.
Then, given any distinct vertices $x,y \in V(D)$, there exists a path $P$ in $D$ from $x$ to $y$ such that $|V(P)| \leq \nu^{-1}$.
\end{lemma}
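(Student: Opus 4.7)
The plan is a breadth-first search from $x$, using the robust outexpansion of $D$ to show that the forward-reachable set grows by at least $\nu n$ vertices at every step, until it covers all but a $\tau$-fraction of $V(D)$; the minimum in-degree condition then forces $y$ into the reachable set one step later, and a path is extracted from the resulting walk.

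Concretely, define $R_0:=\{x\}$ and for each $k\geq 0$ let $R_{k+1}:=R_k\cup\{v\in V(D):N^-(v)\cap R_k\neq\emptyset\}$, so that $R_k$ is the set of vertices reachable from $x$ by a directed walk of length at most $k$. Since $\delta^0(D)\geq \delta n$ and $\tau\ll\delta$, we have $|R_1|\geq 1+\delta n\geq \tau n$. As long as $\tau n\leq|R_k|\leq(1-\tau)n$, applying the robust $(\nu,\tau)$-outexpander property with $S:=R_k$ yields a set $T\subseteq V(D)$ with $|T|\geq|R_k|+\nu n$, each of whose vertices has at least one in-neighbour in $R_k$. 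In particular $T\setminus R_k\subseteq R_{k+1}$, so $|R_{k+1}|\geq|R_k|+\nu n$. Iterating, we arrive at some step $k_0$ with $k_0\leq 1+(1-\tau-\delta)/\nu$ at which $|R_{k_0}|>(1-\tau)n$. Then $|V(D)\setminus R_{k_0}|<\tau n<\delta n\leq|N^-(y)|$, so $y$ has an in-neighbour in $R_{k_0}$ and hence $y\in R_{k_0+1}$.

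Standard BFS bookkeeping (record, for every $v\in R_{k+1}\setminus R_k$, a predecessor $u\in R_k$ with $uv\in E(D)$) lets us trace back from $y$ a directed walk from $x$ to $y$ of length at most $k_0+1$, and shortcutting any repeated vertices yields a directed path $P$ from $x$ to $y$ with $|V(P)|\leq k_0+2$. The only real obstacle is checking the length bound: the hierarchy $\nu\ll\tau\ll\delta$ gives $k_0+2\leq 3+(1-\tau-\delta)/\nu\leq\nu^{-1}$, since $(\tau+\delta)/\nu$ is arbitrarily large. Thus $|V(P)|\leq\nu^{-1}$, as required.
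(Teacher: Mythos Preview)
Your argument is correct and is the standard BFS-plus-robust-expansion proof of this fact. Note that the paper does not actually prove Lemma~\ref{lma:shortpath}; it simply cites it from~\cite{LP}, so there is no in-paper proof to compare against. Your approach is precisely the one found in that reference.

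One small wording issue: from the robust outexpander property you get a set $T$ with $|T|\geq |R_k|+\nu n$ all of whose vertices have an in-neighbour in $R_k$, hence $T\subseteq R_{k+1}$ directly, giving $|R_{k+1}|\geq |T|\geq |R_k|+\nu n$. Your phrasing ``$T\setminus R_k\subseteq R_{k+1}$, so $|R_{k+1}|\geq|R_k|+\nu n$'' is slightly roundabout (one does not immediately know $|T\setminus R_k|\geq \nu n$), but the conclusion is of course the same once you observe $T\subseteq R_{k+1}$.
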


The following lemma and its corollary will be used many times in our proof. 

\begin{lemma} \label{lma:manyshortpaths3}
Let $n \in \mathbb{N}$ and $0 < 1/n \ll \gamma \ll 1$.
Suppose that $D$ is an oriented graph on $n$ vertices with $\delta^0(D) \ge 3n/7$.
Let $H_1, \dots, H_m$ be directed multigraphs on~$V(D)$ with $\Delta(H_i) \leq 2$, $|E(H_i)| \le \gamma n$  and $m \le \gamma n$. 
Let $S_1, \dots, S_m \subseteq V(D)$ with $|S_i| \le  n/25$ and $S_i \cap V(E(H_i)) = \emptyset$. 
Then there exists a set of edge-disjoint paths $\mathcal{P} = \{P_{i,e} \colon i \in [m]$ and $ e \in E(H_i)\}$ in~$D$ such that 
\begin{enumerate}[label = {\rm (\roman*)}]
	\item $P_{i,e}$ has the same starting and ending points as~$e$;
	\item the paths in $\mathcal{P}_i := \{P_{i,e} \colon e \in E(H_i)\}$ are internally vertex-disjoint;
	\item $V(\cup \mathcal{P}_i) = V(D) \setminus S_i$;
	\item $\Delta(\cup \mathcal{P}) \le 2 m $. 
\end{enumerate} 
\end{lemma}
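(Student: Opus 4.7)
The plan is to build the families $\mathcal{P}_1,\dots,\mathcal{P}_m$ inductively, placing $\mathcal{P}_i$ entirely inside the oriented graph
\[
D_i := \bigl(D - \bigcup_{j<i} E(\mathcal{P}_j)\bigr) - S_i.
\]
This placement immediately guarantees edge-disjointness across all of $\mathcal{P}$ and condition (iii); condition (iv) will follow because each $\mathcal{P}_i$ contributes at most~$2$ to the degree of any vertex (an internal vertex of a single path contributes exactly~$2$, an endpoint contributes at most~$2$ using $\Delta(H_i) \leq 2$, and vertices of $S_i$ contribute $0$), so $\Delta(\cup\mathcal{P}) \leq 2m$. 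I will choose the robust-expansion parameters so that $1/n \ll \nu \ll \tau \ll \gamma \ll 1$. A direct count gives $\delta^0(D_i) \geq 3n/7 - 2(i-1) - |S_i| \geq 3n/7 - 2\gamma n - n/25$, and since $|V(D_i)| \leq n$ the ratio $\delta^0(D_i)/|V(D_i)|$ exceeds $3/8 + \eps$ for some absolute $\eps>0$ (the slack comes from $3/7 - 3/8 - 1/25 > 0$). Lemma~\ref{lma:outexpander} then certifies that $D_i$, and indeed any subgraph obtained from it by removing $o(n)$ further vertices and at most $O(\gamma n)$ edges at each vertex, is a robust $(\nu,\tau)$-outexpander.

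To build $\mathcal{P}_i$, enumerate $E(H_i)=\{e_1,\dots,e_r\}$ with $r \le \gamma n$, and construct $P_{i,e_1},\dots,P_{i,e_{r-1}}$ as short paths (using Lemma~\ref{lma:shortpath}) before absorbing the remaining vertices into $P_{i,e_r}$ via a Hamilton path (using Theorem~\ref{thm:robexp}). Concretely, for $j=1,\dots,r-1$ I work in the subgraph $G_j := \bigl(D_i - \bigcup_{k<j} E(P_{i,e_k})\bigr)[W_j]$, where
\[
W_j := \bigl(V(D_i) \setminus (V(E(H_i)) \setminus V(e_j))\bigr) \setminus \bigcup_{k<j}\bigl(V(P_{i,e_k}) \setminus V(e_k)\bigr),
\]
i.e.\ I delete the endpoints of the \emph{other} edges of $H_i$ (to keep them available for internal disjointness) and the internal vertices of paths already chosen. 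The total deletion is only $O(\gamma n \nu^{-1})$ vertices, so $G_j$ is still a robust $(\nu,\tau)$-outexpander containing both endpoints of $e_j$, and Lemma~\ref{lma:shortpath} supplies $P_{i,e_j}$ of length at most $\nu^{-1}$. For the final edge $e_r$, set $W_r := V(e_r) \cup \bigl(V(D_i) \setminus \bigcup_{k<r} V(P_{i,e_k})\bigr)$; the induced subgraph of $D_i - \bigcup_{k<r} E(P_{i,e_k})$ on $W_r$ is again a robust $(\nu,\tau)$-outexpander of min semi-degree exceeding $3|W_r|/8$, so Theorem~\ref{thm:robexp} yields a Hamilton path $P_{i,e_r}$ from the start of $e_r$ to its end. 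This path absorbs every vertex of $D_i$ not yet covered, securing $V(\cup\mathcal{P}_i) = V(D_i) = V(D) \setminus S_i$. Internal disjointness (ii) follows since every earlier endpoint of $H_i$ was excluded from the search set $W_j$ (and from $W_r$, because it lies in $\bigcup_{k<r} V(P_{i,e_k})$) whenever it is not an endpoint of the current edge.

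The main technical obstacle is purely quantitative bookkeeping: verifying that after all cumulative deletions (at most $2\gamma n$ edges at any vertex over the whole construction, and $|S_i| + O(\gamma n \nu^{-1})$ vertices at each stage), the induced subgraphs on $W_j$ and $W_r$ still satisfy the $\delta^0 \geq (3/8+\eps)|W|$ hypothesis needed to invoke Lemma~\ref{lma:outexpander}. The gap $3/7 - 3/8 - 1/25 \approx 0.013$ gives enough room for the $O(\gamma\nu^{-1})$ losses provided $\gamma \ll \nu$, which is why the hierarchy $1/n \ll \gamma \ll \nu \ll \tau \ll 1$ is chosen. Once these degree checks are in place, the single use of Theorem~\ref{thm:robexp} per family $\mathcal{P}_i$ turns the spanning requirement of (iii) from a delicate packing problem into a routine Hamilton-path application.
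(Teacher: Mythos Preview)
Your approach matches the paper's: induct on $m$, and for each $\mathcal{P}_i$ use Lemma~\ref{lma:shortpath} to find short paths for the first $r-1$ edges, then Theorem~\ref{thm:robexp} to find a Hamilton path for the last edge, absorbing all remaining vertices of $V(D)\setminus S_i$. One slip to fix: you state two contradictory hierarchies (first $1/n \ll \nu \ll \tau \ll \gamma$, then $1/n \ll \gamma \ll \nu \ll \tau$); only the second is viable, since you need $\gamma \nu^{-1}$ small to control the total number of vertices deleted by the $r-1 \le \gamma n$ short paths of length $\le \nu^{-1}$---this is exactly the paper's choice $\gamma \ll \nu \ll \tau \ll \eps \ll 1$.
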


\begin{proof}
We proceed by induction on~$m$, the number of multigraphs. 
Suppose that we have already found $\mathcal{P}' := \{P_{i,e} \colon i \in [m-1] $ and $ e \in E(H_i) \}$ with the desired properties. 

Let $\nu, \tau, \eps$ be such that $\gamma \ll \nu \ll \tau \ll \eps \ll 1$.
Pick an arbitrary ordering $e_1, \ldots, e_r$ of the edges in $E(H_m)$.
Further assume that for some $j \in [r]$, we have already constructed paths $P_{1}, \dots, P_{j-1}$ such that, for each $j' \in [j-1]$, 
\begin{enumerate}[label = {\rm (\roman*)}]
	\item $P_{j'}$ has the same starting and ending points as~$e_{j'}$ and has length at most $\nu^{-1} $;
	\item $V(E(H_i)),S_i, V(P_{1})\setminus V(e_{1}), \dots, V(P_{j-1})\setminus V(e_{j-1})$ are disjoint.
\end{enumerate}
We now find $P_{j}$ as follows.
Let $e_j = xy$. 
Let $D' : = D - E(\cup \mathcal{P}') - S_i - (V (P_1 \cup \dots \cup P_{j-1}) \setminus \{x,y\} )$.
Since $|S_i \cup V (P_1 \cup \dots \cup P_{j-1}) )| \le (\frac{1}{25} + \nu^{-1}\gamma) n$,
then $|D'| \ge (1 - \frac{1}{25} - \nu^{-1}\gamma) n $ and $\delta^0(D') \ge \delta^0(D) - (\frac{1}{25} + \nu^{-1}\gamma) n \geq (3/8 + \eps)  |D'|$.
By Lemma~\ref{lma:outexpander}, $D'$ is a robust $(\nu,\tau)$-outexpander.
If $j <r$, then  $D'$ has a path $P_{j}$ from $x$ to $y$ of length at most $\nu^{-1}$ by Lemma~\ref{lma:shortpath}.
If $j=r$, then $D'$ has a Hamilton path $P_{j}$ from $x$ to $y$ by Theorem~\ref{thm:robexp}.
We are done by setting $P_{m,e_j}:=P_{j}$ for all $j \in [r]$. 
\end{proof}

Let $H$ be a directed multigraph on $n$ vertices with $\Delta(H) \leq \gamma n$. 
Note that $H$ can be decomposed into digraphs $H_1, \dots, H_m$ with $m \le 2\sqrt{\gamma} n$ and $\Delta(H_i) \le 1$ and $|E(H_i)| \le 2\sqrt{\gamma} n$.
(By Vizing's theorem, $H$ can be partitioned into $(\gamma n)+1$ matchings and each matching can then be further split into $\gamma^{-1/2}$ almost equal parts to give us the $H_i$.)
Applying the previous lemma to these $H_i$, we obtain the following corollary. 

\begin{corollary} \label{cor:manyshortpaths3}
Let $n \in \mathbb{N}$ and $0 < 1/n \ll \gamma \ll 1$.
Suppose that $D$ is an oriented graph on $n$ vertices with $\delta^0(D) \ge 3n/7$.
Let $H$ be a directed multigraph on~$V(D)$ with $\Delta(H) \leq \gamma n$. 
Then there exists a set of edge-disjoint paths $\mathcal{P} = \{P_{e} \colon e \in E(H)\}$ in~$D$ such that 
\begin{enumerate}[label = {\rm (\roman*)}]
	\item $P_{e}$ has the same starting and ending points as~$e$;
	\item $\Delta(\cup \mathcal{P}) \le 4 \sqrt{\gamma} n $. 
\end{enumerate} 
\end{corollary}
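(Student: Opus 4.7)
The plan is to deduce this directly from Lemma~\ref{lma:manyshortpaths3} by slicing $E(H)$ into $O(\sqrt{\gamma}\, n)$ matchings of size $O(\sqrt{\gamma}\, n)$ and feeding them to the lemma. First I would edge-colour $H$, viewed as an undirected multigraph, to obtain a partition of $E(H)$ into matchings $M_1, \ldots, M_k$ with $k = O(\gamma n)$. Vizing's theorem for multigraphs gives $\chi' \le \Delta(H) + \mu(H) \le 2\gamma n$; alternatively, a greedy argument on the line graph of $H$, whose maximum degree is at most $2(\gamma n - 1)$, achieves the same bound. Each $M_j$ satisfies $\Delta(M_j) \le 1$ in the directed sense since no two of its edges share an endpoint.

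Second, I would chop each matching $M_j$ into at most $\lceil \gamma^{-1/2} \rceil$ consecutive pieces, each of size at most $2\sqrt{\gamma}\, n$ (possible since $|M_j| \le n/2$). Concatenating these pieces across all $M_j$ yields a partition of $E(H)$ into directed multigraphs $H_1, \ldots, H_m$ satisfying $m \le 2\sqrt{\gamma}\, n$, $\Delta(H_i) \le 1$, and $|E(H_i)| \le 2\sqrt{\gamma}\, n$. Note that $2\sqrt{\gamma} \ll 1$ since $\gamma \ll 1$, so the hypotheses of Lemma~\ref{lma:manyshortpaths3} (with $2\sqrt{\gamma}$ playing the role of $\gamma$ there) are met.

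Finally, I would apply Lemma~\ref{lma:manyshortpaths3} to the sequence $H_1, \ldots, H_m$ with $S_i := \emptyset$ for each~$i$. This produces a set of edge-disjoint paths $P_{i,e}$ for $i \in [m]$ and $e \in E(H_i)$ with the same endpoints as~$e$ and with $\Delta(\bigcup \mathcal{P}) \le 2m \le 4\sqrt{\gamma}\, n$. Since each edge of $H$ belongs to a unique $H_i$, the paths can be re-indexed unambiguously as $\{P_e : e \in E(H)\}$, yielding the family required by the corollary. I do not anticipate a genuine obstacle: the corollary is essentially a packaging of Lemma~\ref{lma:manyshortpaths3}, and the only non-mechanical ingredient is the edge-colouring into $O(\gamma n)$ matchings, which is handled by either of the standard multigraph tools above.
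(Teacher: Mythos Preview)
Your proposal is correct and follows essentially the same approach as the paper: partition $E(H)$ into $O(\gamma n)$ matchings via Vizing, chop each matching into $\gamma^{-1/2}$ pieces to obtain $m \le 2\sqrt{\gamma}\,n$ digraphs $H_i$ with $\Delta(H_i)\le 1$ and $|E(H_i)|\le 2\sqrt{\gamma}\,n$, and then apply Lemma~\ref{lma:manyshortpaths3} with $S_i=\emptyset$. Your use of the multigraph form of Vizing (giving $\chi'\le \Delta+\mu\le 2\gamma n$) is in fact slightly more careful than the paper's parenthetical, which quotes the simple-graph bound $\gamma n+1$.
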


\subsection{Overview}
\label{se:overview}

In this subsection, we give an overview of the proof of Theorem~\ref{thm:LPST2} (which is proved in Sections~\ref{se:W} and~\ref{se:finaldecomp}).
We wish to show that every even $n$-vertex tournament $T$ satisfying $\disc(T)>Cn$ and $n$ sufficiently large has a perfect decomposition (i.e.\ is consistent). Let us fix such a tournament $T$; we may further assume by Theorem~\ref{thm:LPST1} that $\disc(T) < n^{2 - \eps}$. We will accomplish this in three steps. In each step we reduce the problem of finding a perfect decomposition of $T$ to the problem of finding a perfect decomposition of a digraph that looks more and more like the digraph described in Theorem~\ref{thm:KellyPaths}.

\vspace{0.3 cm}
\noindent
{\bf Step 1 - remove vertices of high excess.} 
Let $W = \{v \in V(T): |\disc(v)| > \alpha n \}$ for some suitable $\alpha$. Note that since $\disc(T)$ is small, $W$ is also small. Let $W^{\pm}$ be respectively the vertices of $W$ with positive / negative excess and let $R = V(T) \setminus W$. We will construct a partial decomposition $\mathcal{P}_0$ of $T$ with a small number of paths that uses all edges in $E_T(R,W^+) \cup E_T(W^-,R) \cup E_T(W)$ but does not interfere much with $E_T(R)$. 
Set $D_1 = T - \cup \mathcal{P}_0 - W$. Now we can apply  Proposition~\ref{pr:PartDecomp2}(b) to $T - \cup \mathcal{P}_0$ to conclude that if $D_1$ has a perfect decomposition, then so does $T- \cup \mathcal{P}_0$ and hence so does $T$. Thus we have reduced the problem of finding a perfect decomposition of $T$ to that of finding one for $D_1$, but where $D_1$ has no vertices of high excess and 
\[
\disc(D_1) = \disc(T - \cup{P}_0) = \disc(T) - |\mathcal{P}_0| \geq C'n. 
\]
Since there are no vertices of high excess, $D_1$ is close to regular and so one can apply the methods of robust expansion. This step takes place in Theorem~\ref{thm:RemHighDisc} and the key tool for finding $\mathcal{P}_0$ is Lemma~\ref{lma:W} from Section~\ref{se:W}.

\vspace{0.3 cm}
\noindent
{\bf Step 2 - equalise the number of vertices of positive and negative excess.} Given $D_1$ from the previous step, it may be the case that almost all vertices of $D_1$ have say negative excess that is $U^-(D_1)$ is significantly larger than $U^+(D_1)$, where $U^{\pm}(D)$ denote the set of vertices of positive / negative excess in $D$. 

For some fixed $z \in U^-(D_1)$ consider how we might change the sign of its excess. The idea would be to find $x \in U^+(D_1)$ with $xz \in E(D_1)$ and a partial decomposition $\mathcal{Q}$ that
\begin{itemize}
\item has a path $Q^*$ that starts at $x$, uses the edge $xz$ but does not end at $z$;
\item uses all edges incident with $x$;
\item has exactly $\disc^-(z)$ paths ending at $z$.
\end{itemize}
If we can find such a $\mathcal{Q}$, then consider $D_1' = D_1 - E(\mathcal{Q} \setminus \{Q^*\}) - x$. We have $\disc_{D_1'}(z) = 1$ and moreover if $D_1'$ has a perfect decomposition, so does $D_1$ (the path that starts at $z$ in a perfect decomposition of $D_1'$ would be extended by the edge $xz$ in $D_1$). 

We refine this idea to switch the sign of the excess for many vertices in $U^-(D_1)$ in Theorem~\ref{thm:DiscBalance}. We carefully choose a small set of vertices $X \subseteq U^+(D_1)$ and a suitably larger set $Z \subseteq U^-(D_1)$ and a partial decomposition $\mathcal{P}_1$ of $D_1$ such that writing $D_2 = D_1 - E(\mathcal{P}_1) - X$, $D_1$ has a perfect decomposition if $D_2$ does, and $U^+(D_2) = U^-(D_2) \cup Z \setminus X$ and $U^-(D_2) = U^-(D_1) \setminus Z$. Again we use Lemma~\ref{lma:W} from Section~\ref{se:W} as a tool.

\vspace{0.3 cm}
\noindent
{\bf Step 3 - control the degrees.} In this final step (Theorem~\ref{thm:FinalDecomp}), starting with $D_2$ we carefully construct a partial decomposition $\mathcal{P}_2$ of $D_2$ such that $D_3 = D_2 - E(\mathcal{P}_2)$ is a digraph satisfying the properties of Theorem~\ref{thm:KellyPaths}. Hence $D_3$ has a perfect decomposition, and thus so does $D_2$, $D_1$, and $T$. 

\vspace{0.3 cm}
\noindent
We make use of the robust expansion properties of $D_2$ to construct $\mathcal{P}_2$; this is why we need step 1. Also, essentially by definition, the excess of a vertex can never change sign when we remove a partial decomposition from a digraph; this is why we need step 2. Each of steps 1 and 2 will require us to remove a partial decomposition of size linear in $n$, and this is why we must start with $\disc(T) > Cn$ for a suitably large $C$.

\section{Removing small vertex subsets}
\label{se:W}

In Section~\ref{se:highdisc}, we showed how to find a perfect decomposition of $n$-vertex tournaments $T$ ($n$ even) whenever $\disc(T) > n^{2 - \eps}$.
For the remaining cases of Thoerem~\ref{thm:LPST1}, we will require a preliminary result which we prove in this section.
 For almost complete oriented graphs $D$ satisfying $Cn \leq \disc(D) \leq n^{2 - \eps}$, we show in Lemma~\ref{lma:W} that for certain choices of small $W \subseteq V(D)$, we can find a partial decomposition $\mathcal{P}$ of $D$ that uses all the edges incident with $W$ going in the ``wrong'' direction. We will also guarantee that $\mathcal{P}$ uses only a small number of edges from $D - W$ and that $|\mathcal{P}|$ is small. This will be useful later as, in combination with Proposition~\ref{pr:PartDecomp2}, it allows us to remove a small number of problematic vertices from our digraph $D$ at the expense of a small reduction in $\disc(D)$. This is the content of Lemma~\ref{lma:W} below and our goal in this section is to prove it.

\begin{lemma} \label{lma:W}
Let $n \in \mathbb{N}$ and $0 < 1/n \ll \alpha, \beta \ll \gamma \ll  1 $ and $0 < 1/n \ll \eps  \ll 1 $ 
\COMMENT{VP: separated eps from the hierarchy because I need this in my part. Have checked the proofs, but please recheck.
AL: OK, actually, we are fine with $0 < 1/n \ll \alpha, \beta, \eps \ll \gamma \ll  1$}
and $C \ge 32$. \COMMENT{AL:changed inequality on $C$.}
Let $D$ be an oriented graph on $n$ vertices such that $\delta (D) \ge (1 - \eps) n $ and $\disc(D) \ge C n$.
Let $W \subseteq V(D)$ of size $|W| \le \beta n $.
Suppose that $| \disc_D(v) | \le \alpha n$ for all $v \in V(D) \setminus W$.
Then there exists a partial decomposition $\mathcal{P}$ of $D$ such that writing~$H = \cup \mathcal{P}$ we have  
\begin{enumerate}[label={\rm(\roman*)}]
	\item \label{itm:H2} for all $v \in V(D) \setminus W$, $d_H(v) = 2d$ for some $d \le (18 \beta + 4 \gamma) n $;\COMMENT{AL: Change the inequality on $d$, before $d \le (14 \beta + 4 \gamma) n$.}
	\item \label{itm:H3} $H[W] = D[W]$;
	\item \label{itm:H4} for all $w \in W$, if $\disc^{\pm}_D(w) \ge 0$, then $d^{\mp}_{D-H} = 0$;
	\item \label{itm:H5} $\disc(D-H) = \disc(D) - \disc (H) \ge C n/4$.
\end{enumerate}
\end{lemma}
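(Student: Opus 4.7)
Set $R = V(D) \setminus W$ and partition $W = W^+ \cup W^0 \cup W^-$ by the sign of $\disc_D$. Conditions (ii) and (iii) force $H$ to contain $D[W]$, every in-edge of each $w \in W^+$, and every out-edge of each $w \in W^-$; call this initial subgraph $F_0$. To satisfy the partial-decomposition constraint at each $w \in W$, I augment $F_0$ with carefully chosen cross-edges between $W$ and $R$: for $w \in W^+$ I include $\max\{0, d^-_D(w) - d^+_{D[W]}(w)\}$ further out-edges from $w$ to $R$, and I make symmetric choices for $W^-$ and $W^0$. The outcome is a subgraph $F_1$ with $\disc_H(w)$ minimal subject to (iii) --- namely $\disc_H(w) = 0$ for ``good'' $w$ (those with $d^-_D(w) \ge d^+_{D[W]}(w)$) and $\disc_H(w) = d^+_{D[W]}(w) - d^-_D(w) > 0$ for ``bad'' $w$ --- and with $d_{F_1}(r) = O(\beta n)$ at every $r \in R$.

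The key discrepancy estimate is that any bad $w \in W^+$ satisfies $d^-_D(w) < d^+_{D[W]}(w) \le |W| \le \beta n$, hence $\disc_D(w) \ge (1 - \eps - 2\beta)n \ge n/2$ (using $\delta(D) \ge (1-\eps)n$); so the number of bad vertices is at most $2\disc(D)/n$, and their total contribution to $\disc(F_1)$ is at most $(2\disc(D)/n)(\beta n) = 2\beta \disc(D)$. The symmetric bound for $W^-$ yields $\disc(F_1) \le 4\beta \disc(D) \le \disc(D)/2$, which gives $\disc(D) - \disc(H) \ge \disc(D)/2 \ge Cn/2$, establishing (iv) provided the padding in the next stage does not significantly change excesses.

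To pad degrees on $R$ up to $2d$, I choose $d \le (18\beta + 4\gamma)n$ larger than $d^\pm_{F_1}(r)$ at every $r$, and add edges of $D[R]$ at each $r$ to reach $d_H(r) = 2d$ while keeping $\disc_H(r)$ compatible with a partial decomposition at $r$ (i.e. in $[\min\{\disc_D(r),0\}, \max\{\disc_D(r),0\}]$). Since $\delta(D) \ge (1-\eps)n$ and $|\disc_D(r)| \le \alpha n$ on $R$, we have $\delta^0(D[R]) \ge 3|R|/7$, so $D[R]$ is a robust outexpander by Lemma~\ref{lma:outexpander}. I realize the required padding as internally vertex-disjoint paths in $D[R]$ via Corollary~\ref{cor:manyshortpaths3} (or, for finer degree control, via Lemma~\ref{lma:manyshortpaths3}), viewing the padding as an auxiliary multigraph $M$ on $R$ whose degree sequence matches the required increments.

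The hardest part is this last stage: one must simultaneously (a) achieve regularity $d_H(r) = 2d$ on $R$ within the budget $d \le (18\beta + 4\gamma)n$, (b) keep $\disc_H(r)$ compatible with a partial decomposition (and hence ensure the equality $\disc(D) = \disc(H) + \disc(D - H)$ via Proposition~\ref{pr:disc}), and (c) guarantee that the resulting $H$ decomposes as $\cup \mathcal{P}$ with no isolated Eulerian component. For (c), the cross-edges constructed in the first step link each $r \in R$ into a subgraph touching $W$, so no component of $H$ can lie entirely in $R \cap U^0(D)$; combined with the robust expansion of $D[R]$, this should yield a valid path decomposition of $H$ of size $\disc(H)$ meeting all endpoint constraints.
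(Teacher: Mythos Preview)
Your plan has a genuine gap at the final step, and it is exactly the hard part of the lemma. You build a subgraph $H$ with the right edge set (containing $D[W]$ and all wrong-direction cross-edges) and the right degree/excess profile, and then in~(c) you assert that $H$ admits a decomposition into $\disc(H)$ paths whose start/end multiplicities obey the partial-decomposition constraints. But nothing you have done guarantees this: having $\disc^{\pm}_H(v)\le\disc^{\pm}_D(v)$ everywhere does \emph{not} imply $\pn(H)=\disc(H)$. The criterion ``no isolated Eulerian component'' is far from sufficient. For a concrete obstruction, take the oriented graph on $\{a,b,c,d\}$ with edges $ab,bc,ca,ad$: it is connected, not Eulerian, has $\disc=1$, yet $\pn=2$ (any path ending at $d$ must use $ad$, and it cannot also traverse the triangle without revisiting~$a$). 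Your $H$ is much larger and more structured, but you give no argument exploiting that structure; ``robust expansion of $D[R]$'' applies to $D[R]$, not to the sparse subgraph $H\cap D[R]$ you actually need to decompose. A second, smaller gap: your bound $\disc(F_1)\le 4\beta\,\disc(D)$ only tallies contributions from bad vertices of $W$ and ignores $\sum_{r\in R}\disc^+_{F_1}(r)$, which is not obviously negligible.

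The paper's proof avoids both problems by never building $H$ first and decomposing afterwards. Instead it constructs the paths of $\mathcal{P}$ directly, in two stages (Lemmas~\ref{lma:D[W]} and~\ref{lma:D[W,V']}). Each edge that must lie in $H$ (an edge of $D[W]$, or a wrong-direction cross-edge) is first extended to a short ``$(W,V')$-path system'' with both endpoints in $V'=R$; then, using robust expansion of $D[V']$ via Corollary~\ref{cor:matchpath}, these fragments are stitched into a small collection of long paths with \emph{prescribed} sources in $U^+(D)$ and sinks in $U^-(D)$. Because the paths are built with their start/end points chosen up front (and the stitching passes through \emph{every} vertex of $V'$ exactly once per round), one gets simultaneously: $\mathcal{P}$ is a partial decomposition by construction, $d_{H}(v)$ is exactly $2$ per round on $V'$ (giving the regularity in~(i)), and $|\mathcal{P}|$ is controlled directly (giving~(iv)). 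Your plan would become viable if you replaced the ``build $H$, then decompose'' step with this explicit path-building scheme.
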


Note that (iii) guarantees that for every $w \in W$ with $\disc(w) \geq 0$ (resp.\ $\disc(w) \leq 0$), every edge of the form $vw$ (resp.\ $wv$) is in $H$ and we informally refer to such edges as going in the ``wrong'' direction.
The poof of Lemma~\ref{lma:W} is split into two lemmas, Lemmas~\ref{lma:D[W]} and~\ref{lma:D[W,V']}. In Lemma~\ref{lma:D[W]}, we deal with all edges inside $W$ and in Lemma~\ref{lma:D[W,V']}, we deal with the edges between $W$ and $V(D) \setminus W$ going in the ``wrong'' direction. 
The basic idea in each case is as follows. Write $F$ for the set of edges incident with $W$ which we wish to remove from $D$ (and thus to add to $H$).
Each of these edges can be thought of as a path and we start by extending these paths (if necessary) so that their endpoints lie in $V(D) \setminus W$ to give a set of paths $\mathcal{Q}$. The reason for doing this is that $D-W$ is a robust expander and so has good connectivity properties; this allows us to connect the large number of paths in $\mathcal{Q}$ into a small number of long paths $\mathcal{Q}'$ (see Corollary~\ref{cor:matchpath}). At the same time we can ensure the paths in $\mathcal{Q}'$ have suitable start and endpoints so that $\mathcal{Q}'$ is a partial decomposition with a small number of paths that contains all edges in $F$. While this is conceptually quite simple, the process of extending the paths into $V(D) \setminus W$ and choosing appropriate start and endpoints becomes technical if we wish to ensure that the paths we create do not interfere with each other.

Before we can prove these two lemmas, we will need a technical definition and one preliminary result.

Consider a digraph $D$ and a vertex subset $W \subseteq V(D)$. Let $V = V(D) \setminus W$. Suppose we have two internally vertex-disjoint paths $P, P'$ that both start at some $x \in V(D)$ and end at some different vertex $y \in V(D)$. Now starting with $P \cup P'$ delete any edges of $P\cup P'$ that occur inside $V$; this is essentially what we refer to as a $(W,V)$-path system, which is formally defined below.
\begin{definition}
Let $W$ and $V$ be disjoint vertex sets and let $X$, $Y$, and $J$ be sets of paths on $W \cup V$. We write for example $V(J)$ to mean the set of all vertices of all paths in $J$.

We say that $ ( X , Y , J ) $ is a \emph{$(W,V)$-path system} if there exist distinct vertices $x$ and $y$ such that
\begin{enumerate} [label={ \rm (P\arabic*)}]
	\item $X = \{ x \}$ if $x \in V$; otherwise $X$ is a set of two edge-disjoint paths that both start at $x$ and  end in $V$;
	\item $Y = \{ y \}$ if $y \in V$; otherwise $Y$ is a set of two edge-disjoint paths that both start in $V$ and end at $y$; 
	\item $J$ is a set of vertex-disjoint paths such that each path in $J$ has both endpoints in $V$;
	\item \label{itm:disjoint} $d_{X \cup Y \cup J} (v)  \le 1$ for all $v \in V$;
	\item $V(X)$, $V(Y)$, and $V(J)$ are disjoint.
\end{enumerate}
We will often take $X = \{ x x',  x x''\}$ for some $x',x'' \in V$ if $x \in W$ and similarly for $Y$. We will interchangeably think of $X$, $Y$, and $J$ both as a set of paths and as the graph which is the union of those paths, but it will always be clear from the context.

We say that the two paths $P_1$ and $P_2$ \emph{extend} $ ( X , Y , J ) $, if $X \cup Y \cup J \subseteq P_1 \cup P_2$ and each $P_i$ starts at~$x$ and ends at~$y$. 
We refer to $x$ and $y$ as the \emph{source} and \emph{sink}, respectively.
\end{definition}

The following corollary (of Lemma~\ref{lma:manyshortpaths3}) shows how to simultaneously extend a collection of vertex-disjoint $(W,V)$-path systems so that the resulting paths are internally vertex-disjoint..

\begin{corollary} \label{cor:matchpath}
Let $n,s \in \mathbb{N}$ and $0 < 1/n \ll \eps, \eps' \ll 1$ and $1/n \ll 1/s$.
	\COMMENT{AL: added condition on $s$. VP: I don't think we need this. It is implicit that $s \leq \eps' n$}
Let $D$ be an oriented graph with vertex partition $V(D) = W \cup V$ such that $|V| = n$ and $\delta^0( D [ V ] ) \geq (1/2 - \eps) n$.
For $i \in [s]$, let $( X_i , Y_i , J_i)$ be $(W,V)$-path systems. 
Suppose that the sets $V_i := V(X_i \cup Y_i \cup J_i) \cap V$ for $i \in [s]$ are disjoint and that $|\bigcup_{i \in [s]}V_i| \le \eps' n$.  
Then $D \cup \bigcup_{i \in [s]} (X_i \cup Y_i \cup J_i)$ contains paths $P_1,P'_1,  \dots, P_s, P_s'$ such that
\begin{enumerate}[label = {\rm (\alph*)}]
	\item for each $i \in [s]$, $P_i$ and $P'_i$ extend $( X_i , Y_i , J_i )$;
	\item $d_{ \bigcup_{i \in [s]} (P_i \cup P_i')} (v) = 2$ for all $v \in V$.
\end{enumerate}
\end{corollary}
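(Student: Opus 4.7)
The plan is to reduce the construction of all the extensions to a single application of Lemma~\ref{lma:manyshortpaths3}, applied to the oriented graph $D[V]$. Since $|V|=n$ and $\delta^0(D[V])\ge (1/2-\eps)n \ge 3n/7$, the hypotheses of that lemma are available. I will encode all of the ``link-up'' requirements across the $s$ path systems into one multidigraph $H$ on $V$, and use Lemma~\ref{lma:manyshortpaths3} to produce the edge-disjoint, internally vertex-disjoint connecting paths needed to thread the given pieces into the required pairs $P_i,P_i'$.

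To define $H$, I would fix, for each $i\in[s]$, an arbitrary partition of $J_i$ into two ordered lists, the first to be traversed in that order by $P_i$ and the second by $P_i'$. Each of $P_i$ and $P_i'$ is then a concatenation of ``components''---a path of $X_i$ or the vertex $x_i$, followed by its sub-list of paths from $J_i$, followed by a path of $Y_i$ or the vertex $y_i$---interleaved with connecting paths in $D[V]$. Let $H_i$ be the multidigraph on $V$ whose edges record these connections: for each consecutive pair of components along $P_i$ (respectively $P_i'$), a single directed edge of $H_i$ goes from the exit-endpoint in $V$ of the earlier component to the entry-endpoint in $V$ of the later one. A case check on whether $x_i,y_i$ lie in $W$ or in $V$ gives $\Delta(H_i)\le 2$ and $|E(H_i)|=|J_i|+2$. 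Set $H:=\bigsqcup_{i\in[s]} H_i$. Since the $V_i$ are disjoint, $\Delta(H)\le 2$, and using $|V_i|\ge 2|J_i|+2$ one obtains $|E(H)|\le \tfrac{3}{2}\eps' n$, well within any parameter needed by Lemma~\ref{lma:manyshortpaths3}.

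I would then apply Lemma~\ref{lma:manyshortpaths3} to $D[V]$ with $m=1$, $H_1=H$ and $S_1=\emptyset$; the preconditions hold for $\eps,\eps'$ small enough. The lemma produces edge-disjoint, internally vertex-disjoint paths $\{P_e : e\in E(H)\}$ in $D[V]$, each $P_e$ with the same endpoints as $e$, whose union covers $V$. For each $i$, concatenating the pieces of $X_i\cup J_i\cup Y_i$ (in the order fixed above) with the paths $P_e$ for $e\in E(H_i)$ produces two $x_i$-to-$y_i$ walks $P_i,P_i'$. These are simple paths because the pieces $X_i,Y_i,J_i$ are pairwise vertex-disjoint (by (P3), (P5) and the structure of $X$, $Y$), while each $P_e$ lies in $D[V]$ and has all its internal vertices in $V\setminus \bigcup_j V_j$ (internal vertex-disjointness of the $P_e$ excludes loose ends, and internal vertices of the pieces lie in $W$ by (P4)). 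Hence (a) holds. For (b), each $v\in V\setminus \bigcup_j V_j$ is internal to exactly one $P_e$ and appears nowhere else, contributing total degree $2$; and each loose end $v$ of system $i$ receives degree $\deg_{X_i\cup Y_i\cup J_i}(v)+\deg_{H_i}(v)=2$, with the two contributions coming from the fixed pieces and from the connecting paths terminating at $v$.

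The only real obstacle is notational: specifying $H_i$ carefully across the four cases determined by where $x_i,y_i$ lie, and verifying that the concatenated walks are simple paths. The crucial analytic input---robust expansion of $D[V]$---is already packaged inside Lemma~\ref{lma:manyshortpaths3}, so the remaining work is combinatorial bookkeeping once the encoding is chosen.
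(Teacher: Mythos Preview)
Your proposal is correct and follows essentially the same approach as the paper: encode the required connections as a single multidigraph $H$ on the loose ends in $V$ with $\Delta(H)\le 2$, apply Lemma~\ref{lma:manyshortpaths3} once (with $m=1$, $S_1=\emptyset$), and then concatenate. The paper makes the specific choice of putting all of $J_i$ into $P_i$ (so $P_i'$ needs only one connecting path), and it removes the path-system edges $\tilde E$ from $D[V]$ before applying the lemma; your version works without that subtraction because the internal vertices of each $P_e$ lie in $V\setminus\tilde V$, so no path-system edge inside $V$ can be reused---but you should state this explicitly rather than leave it implicit.
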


\begin{proof}
Let $\tilde{V} = \cup_{i \in [s]}V_i$ so that $s \leq |\tilde{V}| \leq \eps'n$.
Let $\tilde{E}$ be the set of edges used in all the paths in all the path systems $(X_i, Y_i, J_i)$ for all $i \in [s]$.
Write $D' = D[V] - \tilde{E}$.

For each $i \in [s]$ let $P_{i1}, \ldots, P_{it(i)}$ be the paths of~$J_i$.
We will apply Lemma~\ref{lma:manyshortpaths3} to join the paths of our path systems together. 
Let $a_{ij}$ and $b_{ij}$ be starting and ending points of~$P_{ij}$, respectively, so $a_{ij},b_{ij} \in \tilde{V}$.
Also, let $x_i, x_i'$ be the two end-points in $V$ of the paths in $X$ and let $y_i, y_i'$ be the two end-points in $V$ of the paths in $Y$ (where possibly $x_i = x'_i$ and/or $y_i=y'_i$).  

Let $H := \bigcup_{i \in [s]} T_i$ be a multigraph on $\tilde{V} \subseteq V$, where
\begin{align*}
T_i := \{ x_ia_{i1}, b_{i1}a_{i2}, b_{i2}a_{i3}, \ldots, b_{i(t(i)-1)}a_{it(i)}, b_{it(i)}y_i, x_i'y_i' \}.
\end{align*}
By property~\ref{itm:disjoint} of path systems $T_i$ is a matching and since the $V_i$ are disjoint, then $H$ is a matching on $\tilde{V}$ so $|E(H)| \leq |\tilde{V}| \leq \eps'n \le  |D'|$. 
Note that $\delta^0( D' ) \geq  \delta^0( D [ V ] ) - |\tilde{V}| \geq (1/2 - \eps - \eps') n \geq 3n/7$.
We apply Lemma~\ref{lma:manyshortpaths3} with $D',H,\emptyset,2\eps'$ playing the roles of $D, H_1, S_1, \gamma$ and obtain a set of edge-disjoint paths $\mathcal{Q} := \{Q_e \colon e \in E(H)\}$ such that
\begin{itemize}
\item for each $e = xy \in E(H)$, $Q_e$ is a path from $x$ to $y$;
\item the paths $Q_e : e \in E(H)$ are vertex-disjoint (since $H$ is a matching) 
\item $V(\cup \mathcal{Q})= V$ and $\Delta( \cup \mathcal{Q}) \leq 2$.
\end{itemize}
For each $i \in [s]$ set 
\begin{align*}
P_i := (X_i \cup Y_i \cup J_i) \cup  \bigcup_{e \in T_i \setminus \{ x_i'y_i' \} }Q_e 
\:\:\:\:\: \text{and} \:\:\:\:\:
P_i' = Q_{x_i'y_i'}.
\end{align*}
Note that $P_i$ forms a path by our choice of $T_i$ and that $P_i, P_i'$ extends $(X_i, Y_i, J_i)$; thus conditions (a) and (b) of the corollary are satisfied.
\end{proof}

Our first step towards proving Lemma~\ref{lma:W} is Lemma~\ref{lma:D[W]} below where we construct a partial decomposition that uses all the edges inside~$W$.

\begin{lemma} \label{lma:D[W]}
Let $n \in \mathbb{N}$ and $0 < 1/n \ll \alpha, \beta,   \eps \ll 1 $.
Let $C \ge 32$. \COMMENT{AL:changed inequality on $C$.}
Let $D$ be an oriented graph on $n$ vertices such that $\delta (D) \ge (1 - \eps) n $ and $\disc(D) \ge C n$.
Let $W \subseteq V(D)$ of size $|W| \le \beta n $.
Suppose that $| \disc_D(v) | \le \alpha n$ for all $v \in V(D) \setminus W$.
Then there exists a partial decomposition $\mathcal{P}$ of $D$ such that writing~$H = \cup \mathcal{P}$ we have 
\begin{enumerate}[label={\rm(\roman*)}]
	\item \label{itm:W1} $H[W] = D[W]$;
	\item \label{itm:W2} $\Delta(H) \le 21 |W| $ and $d_{H} (v) = 18 |W|$ for all $v \in V(D) \setminus W$; \COMMENT{AL: numbers changed here before 17 and 14.}
	\item \label{itm:W4} $\disc(D-H) \ge Cn/2$.
\end{enumerate}
\end{lemma}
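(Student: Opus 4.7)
The plan, following the sketch preceding the lemma, is to extend each edge of $F := E(D[W])$ to a short path with both endpoints in $V := V(D) \setminus W$ and then use the robust expansion of $D - W$ (via Corollary~\ref{cor:matchpath}) to knit these short paths into a small number of long paths forming a partial decomposition of $D$ with the prescribed degree profile.

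\textbf{Construction.} For every edge $e = ww' \in F$, I would use $\delta(D) \geq (1 - \eps)n$ to pick $v^+_e, v^-_e \in V$ with $v^+_e w, w' v^-_e \in E(D)$, giving a $3$-edge path $\tilde e := v^+_e w w' v^-_e$ in $D$ whose endpoints lie in $V$; a greedy choice should keep the reuse of the auxiliary vertices well-distributed. Write $\mathcal{Q} := \{\tilde e : e \in F\}$. I would then build $\mathcal{P}$ over exactly $R := 9|W|$ rounds. In each round, form a single $(W,V)$-path system $(X, Y, J)$ with $X = \{x\}$, $Y = \{y\}$ for vertices $x, y$ carrying enough remaining $\disc^{\pm}_D$-budget (drawn from $V$ whenever possible), and with $J$ a vertex-disjoint subcollection of $\mathcal{Q}$ taken from the still-unused edges of $F$ (or $J = \emptyset$ once $F$ has been exhausted, in which case the round is pure padding). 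Corollary~\ref{cor:matchpath} will then produce two internally vertex-disjoint paths from $x$ to $y$ containing $J$ whose union has degree exactly $2$ at every $v \in V$. Adding these two paths to $\mathcal{P}$ raises $d_H(v)$ by $2$ at each $v \in V$, so after $R$ rounds $d_H(v) = 18|W|$, matching the first half of~(ii). Within a single $J$ the $W$-parts of the short paths must also be vertex-disjoint, so $|J| \leq |W|/2$, whence $R = 9|W|$ rounds comfortably absorb all $|F| \leq \binom{|W|}{2}$ elements of $\mathcal{Q}$.

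\textbf{Verification and main obstacle.} Property~(i) will be built into the construction. For $w \in W$, each $\tilde e \in \mathcal{Q}$ containing $w$ contributes $2$ to $d_H(w)$, totalling at most $2 d_{D[W]}(w) \leq 4|W|$; capping the number of rounds that use $w$ as a source or sink at roughly $8|W|$ should keep $d_H(w) \leq 21|W|$, finishing~(ii). The total source/sink demand of $2R = 18|W|$ sits comfortably within the supply $\disc(D) \geq Cn$ because $|W| \leq \beta n$ with $\beta \ll 1$ and $C \geq 32$. Finally $|\mathcal{P}| = 2R = 18|W|$, so $\disc(D - H) = \disc(D) - |\mathcal{P}| \geq Cn - 18\beta n \geq Cn/2$ by the hierarchy, giving~(iii). \emph{The main obstacle} is the intricate round-by-round scheduling: at each round one has to simultaneously choose short paths with pairwise disjoint $V$-parts, maintain edge-disjointness with earlier rounds, keep $|\bigcup V_i| \leq \eps' n$ so that Corollary~\ref{cor:matchpath} applies, and honour the partial-decomposition quotas at the chosen sources and sinks.
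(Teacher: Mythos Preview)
Your plan has a genuine gap at the very first step: the extension of every edge $ww' \in E(D[W])$ to a $3$-path $v^+_e\,w\,w'\,v^-_e$ with $v^+_e,v^-_e \in V$ is not always possible. A vertex $w \in W$ may have $\disc^+_D(w)$ arbitrarily close to $n$; since $\delta(D)\ge(1-\eps)n$ only bounds the total degree, such a $w$ can have $d^-_D(w)=0$ (or $N^-_D(w)\subseteq W$), and then no in-neighbour $v^+_e\in V$ exists. Symmetrically, an edge ending at a vertex of near-maximal negative excess cannot be extended forward into $V$. This is not a minor technicality you can fix greedily: the hypothesis places no lower bound on $d^-_D(w)$ for $w\in W$.

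The paper's proof is organised precisely around this obstacle. It introduces a threshold set $W^{\pm}_\gamma := \{w\in W : \disc^{\pm}(w)\ge(1-\gamma)n\}$ and, after decomposing $D[W]$ into $|W|$ matchings by Vizing, splits each matching into four types according to which endpoints lie in $W^{+}_\gamma$ or $W^-_\gamma$. Edges from $W^+_\gamma$ to $W^-_\gamma$ are simply kept as single-edge paths in the partial decomposition (their endpoints have ample excess to serve as sources and sinks). Edges with exactly one extreme endpoint use that endpoint itself as source or sink, extending only the other side into $V'$. Only the ``tame'' edges with neither endpoint in $W^{\pm}_\gamma$ are extended on both sides as you propose, and for those one can show $d^{\mp}_D(w)\ge 2|W|+4$ so the extension is available. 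A secondary issue with your sketch is that you set $X=\{x\}$, $Y=\{y\}$, which (by the definition of a $(W,V)$-path system) forces $x,y\in V$; but if essentially all of $\disc(D)$ lives on $W$ (nothing forbids this), your sources and sinks must come from $W$, and then $X,Y$ must each be a pair of edges into $V$ --- which again requires care when the source lies in $W^+_\gamma$.
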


\begin{proof}
Let $\gamma >0$ be such that $\alpha, \beta, \eps \ll \gamma \ll 1 $. 
\COMMENT{AL: changed the hierarchy for $\gamma$}
\COMMENT{VP: I think we also need $\alpha \ll \gamma$}
Let $\ell := |W|$ and let $V' := V(D) \setminus W$. 
Note that
\begin{align}
	\label{eqn:d0(D[V'])}
	\delta^0 ( D[V'] ) \ge (1/2 - \eps - \alpha- \beta)n .
\end{align}
Let $W^{\pm}_{\gamma} := \{ w \in W : \disc^{\pm}(w) \ge ( 1 - \gamma ) n \}$ and $W_0 := W \setminus (W_{\gamma}^+ \cup W_{\gamma}^-)$.
By Vizing's theorem, $D[W]$ can be decomposed into $\ell$ (possibly empty) matchings $M_1, \dots, M_{\ell}$.
For each $ i \in [\ell]$, we partition $M_i$ into matchings
\begin{align*}
M_i^0 &:= \{ ab \in M_i \colon a \in W^+_{\gamma}, b \in W^-_{\gamma}\}, \\ 
M_i^+ &:= \{ ab \in M_i \colon a \in W^+_{\gamma} , b \not\in W^-_{\gamma} \}, 
 \\
M_i^- &:= \{ ab \in M_i   \colon a \not\in W^+_{\gamma}, b \in W^-_{\gamma} \}, \\
M_i' & := \{ ab \in M_i   \colon a \not\in W^+_{\gamma}, b \not\in W^-_{\gamma} \}= M_i \setminus (M_i^0 \cup M_i^+ \cup M_i^-).
\end{align*}
Let $m^{\ast}_i := |M^{\ast}_i|$ for all $\ast \in \{0, \prime, +,-\}$.
Note that for each $i$
\begin{align}
\label{eqn:mi}
	m_i^0 + m_i^+ + m_i^- \le |W^+_{\gamma}|+|W^-_{\gamma}|.
\end{align}

Suppose that we have found partial decompositions $\mathcal{P}_1 , \ldots, \mathcal{P}_{\ell}$ such that writing $H_j = \cup \mathcal{P}_j$, we have 
for each $j \in [ \ell ]$,
\begin{enumerate} [label={\rm (\roman*$'$) }]
	\item \label{itm:Hi5} $\mathcal{P}_j$ is a partial decomposition of $D_{j-1} := D - (H_1 \cup \cdots \cup H_{j-1})$ (and hence $\cup_{j \in [\ell]} \mathcal{P}_j$ is a partial decomposition of $D$ by Proposition~\ref{pr:PartDecomp1}(b)); 
	\item \label{itm:Hi1} $H_j[W] = M_j$;
	\item \label{itm:Hi2} $\Delta(H_j) \le 21$ and $d_{H_j} (v) = 18 $ for all $v \in V'$;
	\item \label{itm:Hi3} $|\mathcal{P}_j| = \disc(H_j) = m_i^0 + 2 m_i^+ + 2 m_i^- + 4$;
	\item \label{itm:Hi6} $H_1, \dots, H_{j}, M_{j+1}, \dots, M_{\ell}$ are edge-disjoint. 
\end{enumerate}
Set $\mathcal{P}:= \cup_{j \in [\ell]}\mathcal{P}_j$ and $H:= \cup_{j \in [\ell]} H_j$. 
Clearly \ref{itm:W1} and \ref{itm:W2} hold. 
To see \ref{itm:W4}, note that \ref{itm:Hi3} and \ref{itm:Hi5} imply that 
\begin{align*}
	\disc(D-H) = \disc(D) - \disc(H)
	\overset{\mathclap{\eqref{eqn:mi}}}{\ge} \disc(D) - 2(|W^+_{\gamma}|+|W^-_{\gamma}| + 2)\ell.
\end{align*}
If $|W^+_{\gamma}|+|W^-_{\gamma}| \le 2C$, then $\disc(D-H) \ge Cn - 4(C  +1) \ell \ge Cn/2 $.
If $|W^+_{\gamma}|+|W^-_{\gamma}| > 2C$, then 
\begin{align*}
	\disc(D-H) 	& \ge \disc(D) - 3(|W^+_{\gamma}|+|W^-_{\gamma}|)\ell \\
	& = \frac12 \sum_{v \in V(D)} \disc_D(v) - 3(|W^+_{\gamma}|+|W^-_{\gamma}|)\ell \\ 
	& \ge  \frac{( 1- \gamma) n}{2} (|W^+_{\gamma}|+|W^-_{\gamma}|) - 3(|W^+_{\gamma}|+|W^-_{\gamma}|)\ell \\
	& \ge ( 1- \gamma- 6 \beta) n (|W^+_{\gamma}|+|W^-_{\gamma}|) /2 \ge Cn/2.
\end{align*}
Therefore to prove the lemma, it suffices to show that such $\mathcal{P}_1, \dots, \mathcal{P}_{\ell}$ exist.

Suppose for some $i \in [\ell]$, we have already found partial decompositions $\mathcal{P}_1, \dots, \mathcal{P}_{i-1}$ satisfying~\ref{itm:Hi5}--\ref{itm:Hi6}.
We now construct $\mathcal{P}_i = \mathcal{P}'_i \cup \mathcal{P}^+_i \cup  \mathcal{P}^-_i \cup \mathcal{P}^0$,
 where $\mathcal{P}_i^*$ is a partial decomposition containing the edges of $M_i^*$ for $* \in \{+,-,',0\}$.
We immediately define $\mathcal{P}_i^0 = M_i^0$.
We will write $H_i, H'_i, H^+_i, H^-_i, H^0_i$ 
 respectively for the union of paths in $\mathcal{P}_i, \mathcal{P}'_i, \mathcal{P}^+_i, \mathcal{P}^-_i, \mathcal{P}_i^0$.
	\COMMENT{AL:added this sentence}
Let $D^0_{i-1} := D_{i-1} - H^0_i - (M_{i+1} \cup \cdots \cup M_{\ell})$.
\COMMENT{VP: changed the definition of $D^0_{i-1}$ by removing the matchings $M_{i+1}, \ldots, M_{\ell}$ in order to satisfy (v').}
		\COMMENT{AL:For the rest of the proof changed $D_i$ to $D^0_{i-1}$.}
Note that by~\eqref{eqn:d0(D[V'])} and~\ref{itm:Hi2},
\COMMENT{VP:need here that $\alpha \ll \gamma$} 
\begin{align}
	\label{eqn:d0(D_i[V'])}
	\delta^0 ( D^0_{i-1} [V'] ) \ge \delta^0 ( D[V'] ) - 21 (i-1) - \ell
	 \ge (1/2 - \gamma) n 
\end{align}
and (by a similar argument as used to bound $\disc(D-H)$) we have
\begin{align}
	\label{eqn:disc(D_i)}
	\disc(D^0_{i-1}) \ge Cn/2.
\end{align}

We first construct the partial decomposition $\mathcal{P}_i'$ of~$D^0_{i-1}$ containing~$M_i'$ in the following claim.
\COMMENT{AL: put the construction of $H_i', H_i^{pm}$ into claims}

\begin{claim} \label{clm:Hi'}
There exists a partial decomposition $\mathcal{P}_i'$  of $D^0_{i-1}$ such that, recalling $H_i' = \cup \mathcal{P}_i'$, we have 
\begin{enumerate}[label={\rm (a$_{\arabic*}$) }]
	\item \label{itm:Hi'0} $|\mathcal{P}_i'| = 4$ (and there exist vertices $x_1,x_2, y_1, y_2$ such that two of the paths start $x_1$ and end at $y_1$ and the other two start at $x_2$ and end at $y_2$);
	\item \label{itm:Hi'} $H_i'[W] = M'_i$, $\Delta(H_i') \le 4$ and $d_{H_i'} (v) = 2$ for all~$v \in V'$.
\end{enumerate}
\end{claim}

\begin{proof}[Proof of Claim]
Let $x_1,x_2,y_1,y_2$ be any four distinct vertices such that $\disc^{+}_{D^0_{i-1}} ( x_j ) \ge 2$ and $\disc^{-}_{D^0_{i-1}} ( y_j ) \ge 2$ for all $j \in [2]$.
Note that such vertices exist by~\eqref{eqn:disc(D_i)}.
Consider $j \in [2]$.
If $x_j \in V'$, then set $X_j = \{x_j\}$; if $x_j \in W$, then $x_j \not\in W_{\gamma}^-$ since $\disc(x_j)>0$.
So $d^+_{D^0_{i-1}} ( x_j , V') \ge \gamma n /2 \ge 2|W|+4$ and we can set $X_j = \{x_j x'_j, x_j x''_j\}$ for some distinct $x'_j, x''_j \in N^+_{D^0_{i-1}} (x_j) \cap V'$. 
Similarly, if $y_j \in V'$, then set $Y_j = \{y_j\}$; if $y_j \in W$, then set $Y_j = \{ y'_j y_j, y''_j y_j\}$ for some distinct $y'_j, y''_j \in N^-_{D^0_{i-1}} (y_j) \cap V'$. 
Moreover, we may further assume that $X_1, X_2, Y_1, Y_2$ are vertex-disjoint.
Let $U:= V(X_1) \cup V(X_2) \cup V(Y_1) \cup V(Y_2)$.
Partition $M_i'$ into $M^1$ and $M^2$ such that (by relabelling $X_1,X_2, Y_1,Y_2$ if necessary) $V(M^j) \cap (X_j \cup Y_j) = \emptyset$ for $j \in [2]$.
Let 
\begin{align*}
	M^1 := \{a_j b_j \colon j \in [r] \} 
	\text{ and }
	M^2 := \{ a_{r+j} b_{r+j} \colon j \in [s] \}.
\end{align*}
For each $j \in [r+s]$, note that $a_j \in W \setminus W^+_\gamma$
	\COMMENT{AL: Before we had $a_j \in W_0$, also change the calculation below. This is where we need $\eps, \beta \ll \gamma$.}
	and so
\begin{align*}
	d^-_{D^0_{i-1}} ( a_j , V') & \overset{\mathclap{\text{\ref{itm:Hi2}}}}{\ge} 
	d^-_{D}(a_j)  - |W| -21|W| 
	= \frac{ d_D(a_j) - \disc_D^+(a_j)}2  - 22|W| \\
	& \ge \frac{(1 - \eps)n - (1 - \gamma)n }2 - 22 \beta n \ge 2 \beta n +4
	\ge 2|W|+4.
\end{align*}
By a similar argument, we have $d^+_{D^0_{i-1}} ( b_j , V') \ge 2|W|+4$.
So there exist distinct $a'_1, \dots, a'_{r+s}, b'_1, \dots, b'_{r+s} \in V' \setminus U$ such that $a'_j \in N^-_{D^0_{i-1}}(a_j)$ and $b'_j \in N^+_{D^0_{i-1}}(b_j)$ for all $j \in [r+s]$. 
Let 
\begin{align*}
J_1 & := \{ a'_j a_j b_j b'_j \colon j \in [r] \}, &
J_2 & := \{ a'_{r+j} a_{r+j} b_{r+j} b'_{r+j} \colon j \in [s] \} .
\end{align*}
Observe that $(X_1, Y_1, J_1)$ and $(X_2,Y_2,J_2)$ are $(W,V')$-path systems. Note further that  
\COMMENT{VP: added extra two lines below since it's a condition of the corollary}
$X_1,Y_1,J_1,X_2,Y_2,J_2$ are vertex-disjoint
and their union has size at most $2|W| + 4 \leq 3\beta n$.
	\COMMENT{AL:before was $|W|+2$}
By considering $(X_1, Y_1, J_1), (X_2,Y_2,J_2)$ and~\eqref{eqn:d0(D_i[V'])}, Corollary~\ref{cor:matchpath} implies that 
$D^0_{i-1} [V'] \cup J_1 \cup J_2$ contains paths $P_1, P'_1,P_2,P'_2$ such that, for $j \in [2]$, $P_j$ and $P_j'$ extends $(X_j, Y_j, J_j)$ and $d_{P_1 \cup P'_1 \cup P_2 \cup P'_2} (v) = 2$ for all $v \in V'$.
Let $\mathcal{P}_i' := \{ P_1, P'_1, P_2, P'_2 \}$.
It is easy to check that $\mathcal{P}_i'$ has the desired properties.
\end{proof}

In the next claim, we construct the partial decompositions $\mathcal{P}_i^+$ and $\mathcal{P}_i^-$ of~$D_{i-1}':= D^0_{i-1} - H'_i$ containing~$M_i^+$ and $M_i^-$ respectively as follows. 
\begin{claim}
There is a partial decomposition $\mathcal{P}_i^+ \cup \mathcal{P}_i^-$ of $D_{i-1}'$ such that, recalling $H_i^{\pm} = \cup \mathcal{P}_i^{\pm}$, we have 
\begin{enumerate}[label={\rm (b$_{\arabic*}$) }]
	\item \label{itm:Hi+1} $|\mathcal{P}_i^{\pm}| = 2 m^{\pm}_i$;  
	\item \label{itm:Hi+2} $H_i^{\pm}[W] = M^{\pm}_i$, $\Delta(H_i^{\pm}) = 8$ and $d_{H_i^{\pm}} (v) = 8$ for all~$v \in V'$.
\end{enumerate}
\end{claim}

\begin{proof}[Proof of Claim]
\COMMENT{AL: Turns out that I need to split into FOUR matchings instead of THREE}
First we arbitrarily partition $M_i^+$ into four matchings, which we denote by $N_1, N_2, N_3,N_4$, each of size $\lfloor m_i^+/4 \rfloor$ or $\lceil m_i^+/4 \rceil$.
Let $m := |N_1|$ and $N_1 = \{ w_j w'_j \colon j \in [m] \}$.

We show that there exist distinct vertices $z_1, \dots, z_{m}  \in V(D) \setminus V(N_1)$ such that, for all $j \in [m]$, $\disc^{-}_{D_{i-1}'} ( z_j ) \ge 2 $.
	\COMMENT{AL:Edited the justification.}
Indeed if $m \le C / 8$ then $\disc (D'_{i-1}) \ge \disc(D^0_{i-1}) - 4 \ge 3Cn/8 \ge 3m n = (|V(N_1)|+m)n$ by~\eqref{eqn:disc(D_i)} so we can find such $z_j$ in this case.
On the other hand, if $m \ge C / 8 \ge 4$, then 
	\COMMENT{AL: The next calculation is where I need 4 matchings. }
\begin{align*}
\disc(D_{i-1}') 
& \ge \sum_{w \in W^+_{\gamma}} \disc^+_{D_{i-1}'} (w) 
\ge | W^+_{\gamma} |  \left( (1- \gamma)n  - \sum_{j \in [i-1]} \Delta(H_j) - \Delta(H_i')-1 \right) \\
& \overset{\mathclap{\text{\ref{itm:Hi2},\ref{itm:Hi'}}}}{\ge}   | M^+_i |  (1-2\gamma)n
\ge (4 m - 3) (1- 2 \gamma) n   \ge 3m n = (|V(N_1)|+m)n,
\end{align*}
so again we can find the desired $z_j$.

By a similar argument as used in the proof of Claim~\ref{clm:Hi'}, there exist $(W,V')$-path systems $(W_j, Z_j, \emptyset)$ for $j \in [m]$ such that, for all $j \in [m]$,
\begin{itemize}
	\item $W_j = \{ w_j w'_jw''_j, w_j w'''_j\}$ with $w''_j,w'''_j \in V'$
	\item if $z_j \in V'$, then $Z_j = \{z_j\}$; otherwise $Z_j = \{z'_j z_j, z''_j z_j\}$ for some $z'_j, z''_j \in V'$;
	\item the $2m$ graphs $W_j$, $Z_j$ with $j \in [m]$ are vertex-disjoint and are subgraphs of $D'_{i-1}$.
\end{itemize}
By considering $(W,V')$-path systems $(W_j, Z_j, \emptyset)$ (and using~\eqref{eqn:d0(D_i[V'])} and \ref{itm:Hi'}), Corollary~\ref{cor:matchpath} implies that 
$D_{i-1}'[V']$ contains paths $P_1, P'_1, \dots, P_{m},P'_{m}$ such that, 
\begin{itemize}
	\item for all $j \in [m]$, $P_j$ and $P_j'$ extend $(W_j, Z_j, \emptyset)$;
	\item $d_{\bigcup_{j \in [m]}(P_j \cup P'_j)} (v) = 2$ for all $v \in V'$.
\end{itemize}
Let $\mathcal{P}^+_{i,1}  :=  \{  P_j,  P'_j:  j \in [m] \}$ and $H^+_{i,1}  := \cup \mathcal{P}^+_{i,1}$.
Note that $|\mathcal{P}_{i,1}| = 2m$ (where two paths start at $w_j$ and end at $z_j$ for every $j \in [m]$).
Moreover, $H^+_{i,1}[W] = N_1$, $\Delta(H^+_{i,1}) = 2$, $d_{H_{i,1}^+} (v) = 2$ for all~$v \in V'$ and $\mathcal{P}_{i,1}$ is a partial decomposition $D_{i-1}'$ by the choice of $z_j, w_j$.
By a similar argument, $D_{i-1}' - H^+_{i,1}$ has edge-disjoint partial decompositions $\mathcal{P}^+_{i,2},\mathcal{P}^+_{i,3},\mathcal{P}^+_{i,4}$ such that $\mathcal{P}^+_i := \bigcup_{k \in [4]}\mathcal{P}^+_{i,k}$ satisfies \ref{itm:Hi+1} and~\ref{itm:Hi+2}.
By a similar argument, we can construct a partial decomposition $\mathcal{P}^-_i$ of $D_i - H_i' - H_i^+$ satisfying \ref{itm:Hi+1} and~\ref{itm:Hi+2}.
\end{proof}

Finally, we let $\mathcal{P}_i = \mathcal{P}'_i \cup \mathcal{P}^+_i \cup  \mathcal{P}^-_i \cup \mathcal{P}^0$.
Our sequential construction of partial decompositions in the digraphs with earlier partial decompositions removed means that \ref{itm:Hi5} holds by Proposition~\ref{pr:PartDecomp1}(b).
Clearly, \ref{itm:Hi1} holds. Also \ref{itm:Hi6} holds by our definition of $D_{i-1}^0$. 
Note that \ref{itm:Hi3} is implied by \ref{itm:Hi'0}, \ref{itm:Hi+1}.
Finally \ref{itm:Hi2} holds by~\ref{itm:Hi'} and \ref{itm:Hi+2}.
This completes the proof of the lemma.
\end{proof}

In the next lemma, we show how to construct a partial decomposition with few paths that uses all those edges incident with $W$ in the ``wrong'' direction; this will help us to isolate the vertices of $W$ in later sections.

\begin{lemma} \label{lma:D[W,V']}
\COMMENT{VP: Changed hierarchy slightly because I need something slightly stronger (I believe the proof does not need to change, but please check).
Originally: Let $n \in \mathbb{N}$ and $0 < 1/n \ll \alpha, \beta \ll  \gamma \ll \eps   \ll 1 $.
AL:OK
}
Let $n \in \mathbb{N}$ and $0 < 1/n \ll \alpha, \beta \ll  \gamma \ll 1$ and $1/n \ll \eps   \ll 1$.
Let $C \ge 5$. 
Let $D$ be an oriented graph on $n$ vertices such that $\delta (D) \ge (1 - \eps) n $ and $\disc(D) \ge C n$.
Let $W \subseteq V(D)$ of size $|W| \le \beta n $ such that $D[W]$ is empty.
Suppose that $| \disc_D(v) | \le \alpha n$ for all $v \in V(D) \setminus W$.
Then there exists a partial decomposition $\mathcal{P}$ of $D$ such that writing~$H = \cup \mathcal{P}$ we have 
\begin{enumerate}[label={\rm(\roman*)}]
	\item \label{itm:WV1} $|\mathcal{P}| \leq 2 ( 2 + 3 \beta) n$ (equivalently $\disc(H) \le 2 ( 2 + 3 \beta) n$);
	\item \label{itm:WV3} if $w \in W$ with $\disc^{\pm}_D(w) \ge 0$, then $N^{\mp}_{D- H}(w) = \emptyset$;
	\item \label{itm:WV4} for all $v \in V(D) \setminus W$, $d_H(v) = 2d$ for some $d \le 4 \gamma n $.
\end{enumerate}
\end{lemma}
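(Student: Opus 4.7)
The plan is to construct $\mathcal{P}$ in two stages: first extend each wrong-direction edge at a vertex $w\in W$ into a length-$2$ pass-through segment in $D$, and then combine these segments into long paths using the robust-expander structure of $D[V']$, where $V' := V(D)\setminus W$. By the degree hypotheses, $\delta^0(D[V']) \geq (1-\eps-\alpha)n/2 - \beta n \geq 3n/7$, so $D[V']$ is a robust outexpander by Lemma~\ref{lma:outexpander}.

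First, partition $W = W^+ \cup W^-$ according to the sign of $\disc$ (ties broken arbitrarily). For each $w\in W^+$ the wrong-direction edges at $w$ are its $d^-_D(w)$ in-edges, all coming from $V'$ since $D[W]=\emptyset$; choose a bijection from $N^-_D(w)$ to a size-$d^-_D(w)$ subset of $N^+_D(w)$ (possible since $\disc(w)\geq 0$ gives $d^+(w)\geq d^-(w)$) to obtain pass-through segments of the form $uwv$. Do the symmetric construction for $w\in W^-$. Let $\mathcal{S}$ be the resulting collection of length-$2$ segments; because in an oriented graph there is at most one edge between $v$ and $w$, each $v\in V'$ is an endpoint of at most $|W|\leq \beta n$ segments.

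Next, view $\mathcal{S}$ as a directed multigraph on $V'$ of maximum degree at most $|W|$. By Vizing's theorem it decomposes into $\leq |W|+1$ matchings; further chunk each matching into pieces of at most $\eps' n/4$ edges (for a small constant $\eps'$) to obtain $k = O(\beta n)$ groups $\mathcal{G}_1,\ldots,\mathcal{G}_k$ such that within each $\mathcal{G}_j$ the segments have pairwise disjoint endpoints in $V'$ and $|V(\mathcal{G}_j)\cap V'|\leq \eps' n$. For each $j$ form a $(W,V')$-path system $(X_j,Y_j,J_j)$ with $J_j := \mathcal{G}_j$ and source/sink $x_j,y_j$ chosen as below, and apply Corollary~\ref{cor:matchpath} to extend it to two long paths $P_j,P_j'$. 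The resulting $\mathcal{P}=\bigcup_j\{P_j,P_j'\}$ covers all wrong-direction edges at $W$, has $|\mathcal{P}|=2k=O(\beta n)\leq 2(2+3\beta)n$, and by the corollary's uniform degree conclusion $d_H(v)=2k=O(\beta n)\leq 8\gamma n$ for every $v\in V'$, using the hierarchy $\beta\ll\gamma$.

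Finally, to ensure that $\mathcal{P}$ is a partial decomposition, choose each source $x_j$ from $W^+ \cup \{v\in V':\disc^+_D(v)>0\}$ and each sink $y_j$ from $W^-\cup\{v\in V':\disc^-_D(v)>0\}$, distributed by a greedy/Hall-type argument so that the number of $j$ with $x_j=v$ is at most $\disc^+_D(v)/2$ (and similarly for sinks). This is easily feasible because $\disc(D)\geq Cn$ far exceeds the $2k=O(\beta n)$ sources and sinks required, and sources at $w\in W^+$ use up unused out-edges (of which there are $\disc^+(w)$ available). The hard part is Stage~2: coordinating the grouping so that both the within-group $V'$-disjointness and the size-bound $|V(\mathcal{G}_j)\cap V'|\leq \eps' n$ required by Corollary~\ref{cor:matchpath} hold simultaneously with only $O(\beta n)$ groups, since it is the number of groups that controls the final degree bound $d_H(v)\leq 8\gamma n$; a secondary subtlety is distributing sources and sinks when the excess $\disc(D)$ may be concentrated on only a few vertices of~$W$.
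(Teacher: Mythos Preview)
Your outline captures the right high-level idea (extend each wrong-direction edge to a length-$2$ segment and then sew segments into long paths via robust expansion), but there is a genuine gap in Stage~2. A $(W,V')$-path system requires the paths in $J$ to be \emph{vertex}-disjoint, not just disjoint at their $V'$-endpoints. Your Vizing colouring of the auxiliary multigraph on $V'$ only guarantees that segments in a group $\mathcal{G}_j$ have distinct $V'$-endpoints; it says nothing about their middle vertices in~$W$. Concretely, if $w\in W^+$ has $\disc_D(w)$ close to $0$ (the lemma places no bound on $|\disc_D(w)|$ for $w\in W$, and in the applications in Section~\ref{se:finaldecomp} this case really occurs), then $w$ sits in roughly $n/2$ segments, and many of these can land in the same group $\mathcal{G}_j$. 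Then $J_j=\mathcal{G}_j$ is not a valid path system and Corollary~\ref{cor:matchpath} cannot be invoked; moreover the two paths $P_j,P_j'$ it would produce are simple paths, so together they can absorb at most two segments through any single $w$.

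The obvious repair --- refining each group so that every $w$ appears at most once (or twice) --- forces $k\ge \max_w d^-_D(w)$, which can be $\Theta(n)$; then $d_H(v)=2k$ is far larger than $4\gamma n$ and (iii) fails. The paper resolves this by a different organisation. It separates $W$ into $W^{\pm}_\gamma$ (where $d^{\mp}(w)\le\gamma n/2$, handled by $O(\gamma n)$ one-system calls) and $W^{\pm}_0$ (where $d^{\mp}(w)$ may be $\Theta(n)$). For $W_0$ it exploits the full strength of Corollary~\ref{cor:matchpath}: it packs $q$ path systems into each of $p\approx n/q$ calls, with $1/q\ll\gamma$, so that the number of \emph{calls} (which controls $d_H(v)$) is $p+3\ell\le 4\gamma n$, while the number of \emph{path systems} (which controls $|\mathcal{P}|$) is $pq+3\ell\approx n$. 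The delicate point is allocating, for each $w\in W_0$ and each call $i$, sets $A_{i,k}\subseteq N^-(w_k)$ and $B_{i,k}\subseteq N^+(w_k)$ of size at most $q$ that are simultaneously disjoint across $k$ (so the $q$ systems in call $i$ have disjoint $V'$-parts) and across $i$ (so the segments are edge-disjoint); this is done by a Hall-type matching argument. Your sketch is missing precisely this batching-and-allocation step, and without it the degree bound in (iii) cannot be met.
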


\begin{proof}
Let $q \in \mathbb{N}$ be such that $ \alpha, \beta \ll 1/q \ll \gamma $. 
Let $\ell :=  \lceil \gamma n \rceil$ and $p: = \lceil n/q \rceil \ge \alpha n , \beta n$.
Let $V' := V(D) \setminus W$. 
Note that
\begin{align}
	\label{eqn:d0(D[V'])1}
	\delta^0 ( D[V'] )  \ge (1/2 - \eps - \alpha- \beta)n .
\end{align}
Let 
\begin{align*}
	W^{\pm} &:= \{ w \in W : \disc_D^{\pm}(w)  \ge 0 \}, \\ 
  W^{\pm}_{\gamma} & := \{ w \in W : \disc^{\pm}(w) \ge ( 1 - \gamma ) n \},\\
	W^{\pm}_0 & := W^{\pm} \setminus W^{\pm}_{\gamma}.
\end{align*}
Let $D_0 := D[V', W^+] \cup D[W^-, V']$.
We start by showing that if we can find a family~$\mathcal{S}$ of edge-disjoint $(W,V')$-path systems satisfying the following properties, then the lemma holds:
\begin{enumerate}[label={\rm(\roman*$'$)}]
	\item \label{itm:S1}
	$\mathcal{S} = \{(X_{i,j} , Y_{i,j}, J_{i,j}) \colon i \in [p], j \in [q]\} \cup \{(X_{i} , Y_{i}, J_{i}) \colon i \in [p+1,p+3\ell] \}$;
	\item \label{itm:S4}
	$\bigcup_{i \in [p], j \in [q]} J_{i,j} \cup \bigcup_{i' \in [p+1, p+3\ell]} J_{i'}$ contains $D_0$;
	\item \label{itm:S5}
	each $J_{i,j}$ and $J_{i'}$ consists of vertex-disjoint paths of length~$2$ of the form $awb$ for some $a,b \in V'$ and $w \in  W$;
	\item \label{itm:S6} for all $v \in V'$ we have $2f(v) \leq \disc^+(v)$ and $2g(v) \leq \disc^-(v)$ where $f(v)$ (resp.\ $g(v)$) denotes the number of times $v$ appears as a source (resp.\ a sink) in $\mathcal{S}$ (recall the definition of source and sink for a $(W,V')$-path system);
	\item \label{itm:S2}
	for all $i \in [p]$, $\{ V' \cap V( X_{i,j} \cup Y_{i,j} \cup J_{i,j}) \colon j \in [q] \}$ are disjoint and $|V' \cap \bigcup_{j \in [q]} V( X_{i,j} \cup Y_{i,j} \cup J_{i,j})| \le 2 \beta n +4 $;
	\item \label{itm:S3}
	for all $i \in [p+1,p+3\ell]$, $|V' \cap V( X_{i} \cup Y_{i} \cup J_{i})| \le 2 \beta n +4$.
\end{enumerate}
Let $\mathcal{S}_i  := \{(X_{i,j} , Y_{i,j}, J_{i,j}) \colon  j \in [q]\}$ for $i \in [p]$ and $\mathcal{S}_{i'} := \{(X_{i'} , Y_{i'}, J_{i'})\}$ for $i' \in [p+1,p+3\ell]$.
It is easy to verify that by repeated application of Corollary~\ref{cor:matchpath} (once for each $\mathcal{S}_i$), there exists a set $\mathcal{P}$ of edge-disjoint paths  of $D$ with $\mathcal{P} = \mathcal{P}_1 \cup \cdots \cup \mathcal{P}_{p + 3 \ell}$
and $H_i := \cup \mathcal{P}_i$ such that
\begin{enumerate}[label = {\rm (\alph*)}]
	\item for all $i \in [p]$, we have $|\mathcal{P}_i| = 2q$ with $\mathcal{P}_i = \{P_{i,1},P'_{i,1}, \dots, P_{i,q}, P_{i,q}' \}$;
	\item for each $i \in [p]$ and $j \in [q]$, $P_{i,j}$ and $P'_{i,j}$ extend $( X_{i,j} , Y_{i,j} , J_{i,j} )$;
	\item for each $i' \in [p+1, p+3\ell]$, $\mathcal{P}_{i'} = \{ P_{i'}, P'_{i'} \}$ where $P_{i'}$ and $P'_{i'}$  extends $( X_{i'} , Y_{i'} , J_{i'} )$;
	\item for all $i \in [p + 3\ell]$ and all $v \in V'$, $d_{ H_i } (v) = 2$.
\end{enumerate}
\COMMENT{VP: added sentence below}
Now we check that the conclusion of the lemma holds for $\mathcal{P}$ as defined above.
Note that by the choice of sources and sinks for the path systems, i.e.\ \ref{itm:S6}, $\mathcal{P}$ is a partial decomposition of $D$.
Note also that \ref{itm:WV1} holds since $|\mathcal{P}| = \sum_{i \in [p+3 \ell]} |\mathcal{P}_i| = 2pq + 6 \ell \le 2 ( 2 + 3 \beta) n$. Also \ref{itm:WV3} holds by \ref{itm:S4}. Finally \ref{itm:WV4} holds by (d) as $p+3 \ell \le 4 \gamma n $.  
Thus to prove the lemma, it suffices to show that such $\mathcal{S}$ exists.

\COMMENT{AL:added outline}
Here we give a brief outline of the remainder of the proof.
First we will find all sources and sinks that are required. 
We split $D_0$ into 
\[ 
D_1:= D[V', W^+_{\gamma}] \cup D[W^-_{\gamma}, V'] 
\:\:\:\text{ and }\:\:\:
 D_2:=D[V', W^+_{0}] \cup D[W^-_{0}, V'].
\]
The edges of $D_1$ will be covered by $\mathcal{S}_{p+i'}$ for $i' \in [3\ell]$ and the edges of $D_2$ will be covered by $\mathcal{S}_i$ for $i \in [p]$.
\medskip

\noindent
\textbf{Finding sources and sinks.}
First, we define the sources and sinks for the $(W,V')$-path systems. 
Choose a multiset~$X:= \{x_{i,j} \colon i \in [p], j \in[q] \} \cup \{ x_{p+i'} \colon i' \in [3\ell] \}$ of vertices such that $\disc^+_D(v) \ge 2 f(v) $ for all $v \in V(D)$, where $f(v)$ denotes the number of times $v$ appears in~$X$.
Note that such $X$ exists since 
\begin{align*}
	\disc(D) \ge C n \ge 2 ( pq + 3\ell) + n.
\end{align*}
Similarly, choose a multiset $Y:= \{y_{i,j} \colon i \in [p], j \in[q] \} \cup \{ y_{p+i'} \colon i' \in [3\ell] \}$ of vertices such that $\disc^-_D(v) \ge 2 g(v) $ for all $v \in V(D)$, where $g(v)$ denotes the number of times $v$ appears in~$Y$.
Note that for all $v \in V(D)$,
\begin{align}
	f(v) +g(v) \le |\disc_D(v)|/2. \label{eqn:fg}
\end{align}
Since $\ex_D(v) \leq \alpha n$ for all $v \in V'$ and $\alpha n \le p, \ell $, we may assume that, by relabelling if necessary,
\begin{itemize}
	\item for all $i \in [p]$, the multiset $V' \cap \{ x_{i,1}, \dots, x_{i,q}, y_{i,1}, \dots, y_{i,q} \}$ contains no repeated vertices;
	\item for all $i' \in [\ell]$, the multiset 
	$$V' \cap \{ x_{p+3i'-2}, x_{p+3i'-1}, x_{p+3i'}, y_{p+3i'-2}, y_{p+3i'-1}, y_{p+3i}\}$$ contains no repeated vertices.
\end{itemize}
Note that $x_{i,j}$ and $y_{i,j}$ will be the source and sink for $(X_{i,j}, Y_{i,j}, J_{i,j})$ and  $x_{p+i'}$ and $y_{p+i'}$ will be the source and sink for $(X_{p+i'}, Y_{p+i'}, J_{p+i'})$.
For $i \in [p]$, let $f_{i}, g_i : W \rightarrow [0,q]$
\COMMENT{VP: previously $f_i g_i$ were functions into $[n]$, but changed to $[q]$}
 be functions such that $f_i(w)$ (and $g_i(w)$) is the number of $j \in [q]$ satisfying $w = x_{i,j}$ (and $w = y_{i,j}$, respectively). Our choices here guarantee that \ref{itm:S6} holds.
\medskip

\noindent \textbf{Covering edges in $D_1$.}
Consider any $i' \in [3\ell]$. 
If $x_{p+i'} \in V'$, then set $X_{p+i'} = \{x_{p+i'}\}$.
If $x_{p+i'} \in W$, then (by our choice of $x_{p+i'}$) we have $\disc(x_{p+i'}) > 0$ and so $d^+_{D} ( x_{p+i'} , V') = d^+_{D} ( x_{p+i'}) \ge n/4$.
We can set $X_{p+i'} = \{x_{p+i'} x'_{p+i'}, x_{p+i'} x''_{p+i'} \}$ for some distinct $x'_{p+i'}, x''_{p+i'} \in N^+_{D} (x_{p+i'}) \subseteq V'$. 
Similarly, if $y_{p+i'} \in V'$, then set $Y_{p+i'} = \{ y_{p+i'} \}$.
If $y_{p+i'} \in W$, then we can set $Y_{p+i'} = \{y'_{p+i'} y_{p+i'}, y''_{p+i'} y_{p+i'}\}$ for some distinct $y'_{p+i'}, y''_{p+i'} \in N^-_{D} (y_{p+i'}) \subseteq V'$.
Furthermore, we can assume that all $X_{p+i'}, Y_{p+i'}$ are edge-disjoint and
\begin{itemize}
	\item for all $i' \in [\ell]$, $X'_{p+3i'-2}$, $X'_{p+3i'-1}$, $X'_{p+3i'}$, $Y'_{p+3i'-2}$, $Y'_{p+3i'-1}$, $Y'_{ p + 3i'}$ are vertex-disjoint, where $X'_j = X_j \cap V'$ and $Y'_j = Y_j \cap V'$.
\end{itemize}
Let $	\hat{X} := \bigcup_{i' \in [3\ell] } X_{p+i'}$ and $\hat{Y} := \bigcup_{i' \in [3 \ell] } Y_{p+i'}$.
Note that $D_0, \hat{X}, \hat{Y}$ are edge-disjoint and 
\begin{align}
	\label{eqn:D(XY)}
	\Delta(\hat{X} \cup \hat{Y}) \le 6 \ell.
\end{align}
For all $w \in W^+_{\gamma}$, 
\begin{align*}
	d_{D_1}^- (w) = d_{D}^- (w) = \frac{ d_D(w) - \disc^+_D(w) }{2} \le \gamma n /2 \le \ell
\end{align*}
and, similarly, $d_{D_1 }^+ (w) \le \ell $ for all $w \in W^-_{\gamma}$.
Since $|W| \le \beta n \le \ell$, we deduce that $\Delta( D_1) \le \ell$.
By Vizing's theorem, $D_1$ can be decomposed into $\ell$ matchings $M'_1, \dots, M'_{\ell}$.
Consider any $i \in [\ell]$.
Partition $M'_i$ into three matchings $M_{p + 3i-2}, M_{p + 3i-1}, M_{p + 3i}$ such that for each $j \in [3]$,
\begin{align*}
	V( X_{p + 3i-3+j} \cup Y_{p + 3i-3+j} ) \cap V(M_{p + 3i-3+j}) = \emptyset.
\end{align*}
For each $i' \in [3\ell]$ and each vertex $w \in V(M_{p+i'}) \cap W$, if $w \in W^{ \pm }_{ \gamma }$, then by~\eqref{eqn:D(XY)}
\begin{align*}
	d_{D - \hat{X} - \hat{Y}}^{\pm} (w) \ge d_D^{\pm}(w) - 6\ell \ge  n/4.
\end{align*}
Hence, by a simple greedy argument, we can extend each $M_{p+i'}$ (with $i' \in [3\ell]$) into a graph~$J_{p+i'}$ such that 
\begin{itemize}
	\item 
	$J_{p+i'}$ consists of precisely $|M_{p+i'}|$ vertex-disjoint paths of length~$2$ with starting points and endpoints in $V'$ (and midpoint in $W$) and $J_i$ is vertex-disjoint from $X_{p+i'}$ and $Y_{p+i'}$;
	\item the $9 \ell$ different graphs $X_{p+1}, Y_{p+1}, J_{p+1}, \dots, X_{p+ 3\ell}, Y_{p + 3\ell}, J_{p+3\ell}$ are edge-disjoint. 
\end{itemize}
Note that each $(X_i, Y_i, J_i)$ is a $(W,V')$-path system satisfying \ref{itm:S5} and~\ref{itm:S3}.
Let $\mathcal{S}' := \bigcup_{i' \in [3\ell]} (X_{p+i'} \cup Y_{p+i'} \cup J_{p+i'})$ and note that $\mathcal{S}'$ covers all the edges in $D_1$.
\medskip

\noindent \textbf{Covering edges in $D_2$.}
We now construct $(X_{i,j}, Y_{i,j},J_{i,j})$, which cover all the edges in~$D_2$.
Initially, set $X_{i,j} = \{x_{i,j}\}$, $Y_{i,j} = \{y_{i,j}\}$ and let $J_{i,j}$ be empty for all $i \in [p]$ and $j \in [q]$. 
If $x_{i,j} \in W^+_\gamma$, then $d^+_{D - S'} ( x_{i,j}) \ge n/4 $ and we can set $X_{i,j} = \{x_{i,j} x'_{i,j}, x_{i,j} x''_{i,j} \}$ for some distinct $x'_{i,j}, x''_{i,j} \in N^+_{D-\mathcal{S}'} (x_{i,j}) \subseteq V'$. 
Similarly, if $y_{i,j} \in W_{\gamma}^-$, then set $Y_{i,j} = \{y'_{i,j} y_{i,j}, y''_{i,j} y_{i,j}\}$ for some distinct $y'_{i,j}, y''_{i,j} \in N^-_{D-\mathcal{S}'} (y_{i,j}) \subseteq V'$. 
\COMMENT{VP: sentence added below to take care of sources and sinks in $W_0^+ cup W_0^-$.}
(Later, in Claim~\ref{clm:final} we will modify those $X_{i,j}$ (resp.\ $Y_{i,j}$)  for which $x_{i,j} \in W_0^+$ (resp.\ $y_{i,j} \in W_0^-$).\ )
We can furthermore assume that all $X_{i,j}, Y_{i,j}$ are edge-disjoint and, for all $i \in [p]$,
$X'_{i,1}, \dots, X'_{i,q}, Y'_{i,1}, \dots, Y'_{i,q}$ are vertex-disjoint, where $X'_{i,j} : = V' \cap V(X_{i,j})$ and $Y'_{i,j} : = V' \cap V(Y_{i,j})$.

\COMMENT{VP: added extra informal explanation here to help the reader. Some of it is repetition but I think it will help the reader/referee. Please check. VP:I think the original description was slightly wrong so now corrected.}
Instead of constructing $(X_{i,j}, Y_{i,j}, J_{i,j})$ one at a time, we build them up in rounds, in each round simultaneously adding a little extra to every $(X_{i,j}, Y_{i,j}, J_{i,j})$.
Before proving this, we  describe somewhat informally how to construct the $J_{i,j}$.
Let $w_1, \dots, w_{s}$ be an enumeration of $W^+_0 \cup W^-_0$. 
For simplicity, we further assume that none of the $w_i$ is a source or sink, that is, $f(w_i) = 0 =g(w_i)$.
	\COMMENT{AL:Added this further assumption.}
For each $i \in [p]$ and $k \in [s]$, we will construct sets $A_{i,k} \subseteq N_{D-\mathcal{S}'}^-(w_{k})$ and $B_{i,k} \subseteq N_{D-\mathcal{S}'}^+(w_{k})$ of suitable size (with $|A_{i,k}| = |B_{i,k}| \leq q$) such that for each $i$ the sets $A_{i,1}, \ldots, A_{i,s}, B_{i,1}, \ldots, B_{i,s} \subseteq V'$ are disjoint. 
We further guarantee that 
\begin{align}
\label{eqn:cover1}
\bigcup_{i \in [p]}A_{i,k} &= N_{D-\mathcal{S}'}^-(w_k) 
\:\:\text{ if } w_k \in W_0^+ \\
\label{eqn:cover2}
\:\:\:\:\text{ and }\:\:\:\:  
\bigcup_{i \in [p]}B_{i,k} &= N_{D-\mathcal{S}'}^+(w_k) 
\:\:\text{ if } w_k \in W_0^-.
\end{align}
These sets will be built up in rounds using matchings, but assuming we have these sets, for each $i$, we define $F_i$ to be the graph with edges 
\begin{align*}
	E(F_i) := \bigcup_{ k \in [s] } \{ a w_{k}, w_{k}b \colon a \in A_{i, k}, b \in B_{i, k} \}.
\end{align*}
so that $\bigcup_{i \in [p]}F_i \supseteq D_2$ by \eqref{eqn:cover1} and \eqref{eqn:cover2}. 
	\COMMENT{AL: before was $\bigcup_{i \in [p]}F_i \supseteq D_2$. VP: I have put it back as I still think it should be a superset! E.g. $F$ also contains edges from $W_0^+$ to $V'$, which are not edges of $D_2$}
Notice that $F_i$ is the union of vertex-disjoint oriented stars with centers $w_1, \ldots, w_s$, where the star at $w_i$ has an equal number of edges (at most $q$) entering and exiting $w_i$.	
So it is easy to see that each $F_i$ can be decomposed into $J_{i,1}, \ldots, J_{i,q}$ where each $J_{i,j}$ satisfies~\ref{itm:S5}. Let us prove all of this formally noting that the fact that some of the $w_i$ are sources or sinks will mean we will have to be more careful about the sizes of our $A_{i,j}$ and $B_{i,j}$.

Let $h : W^+_0 \cup W^-_0 \rightarrow [n/2]$ be the function such that if $w \in W^{\pm}_0$, then 
\begin{align*}
	h(w) = d_D^{\mp}(w) = d_{D - \mathcal{S}'}^{\mp}(w) \le n/2.
\end{align*}
So $h(w)$ will correspond to the number of $\{J_{i,j} \colon i \in [p], j \in [q]\} $ that will contain~$w$.
Let $h_1, \dots, h_p:  W^+_0 \cup W^-_0 \rightarrow [0, q]$ be functions such that, for each $w \in W^+_0 \cup W^-_0$, $\sum_{i \in [p]} h_i(w) = h(w)$ and 
\begin{equation}
\label{eq:fgh}
f_i(w) + g_{i}(w) + h_i(w) \le q.
\end{equation}
Indeed this is possible by considering $h'_i(w):= q - f_i(w) - g_i(w) \geq 0$ so that 
\[
\sum_{i \in [p]}h'_i(w) \geq pq - f(w) - g(w) \overset{\text{\eqref{eqn:fg}}}{\geq} n - \frac{1}{2}|\disc_D(w)| \geq n/2 \geq h(w)
\]
 and making a suitable choice of $h_i(w) \leq h_i'(w)$.
Here $h_{i}(w)$ will help determine the number of $\{J_{i,j} \colon j \in [q]\} $ that will contain~$w$.

Recall $w_1, \dots, w_{s}$ is an enumeration of $W^+_0 \cup W^-_0$.
For $i \in [p]$, let $X_i := \bigcup_{j \in [q]} X_{i,j}$ and $Y_i := \bigcup_{j \in [q]} Y_{i,j}$.
Suppose for some $k \in [0,s]$, we have already found $\{A_{i,k'},B_{i,k'}\}_{i \in [p], k' \in [k]}$ such that 
\COMMENT{VP: added extra quantifiers below. ``for each ...''}
\begin{enumerate}[label = {\rm (\alph*$'$)}]
	\item \label{itm:AB1} for each $i \in [p]$, $V' \cap V(X_i), V' \cap V(Y_i), A_{i,1}, \dots, A_{i,k}, B_{i, 1} \dots, B_{i, k}$ are disjoint;
	\item \label{itm:AB2} 
for each $i \in [p]$ and $k' \in [k]$,	 $ | A_{i,k'} | = h_i(w_{k'}) + 2 g_i ( w_{k'} )$ and $ |B_{i,k'} | = h_i (w_{k'}) + 2 f_i ( w_{k'} )$;
	\item \label{itm:AB3} for each $k' \in [k]$, $A_{1,k'}, \dots, A_{p,k'} \subseteq N_{D-\mathcal{S}'}^-(w_{k'})$ are disjoint;
	\item \label{itm:AB4} for each $k' \in [k]$, $B_{1,k'}, \dots, B_{p,k'} \subseteq N_{D-\mathcal{S}'}^+(w_{k'})$ are disjoint. 
\end{enumerate}

\begin{claim}
\label{clm:final}
If $k = s$ then we can construct $\mathcal{S}$ satisfying \ref{itm:S1} - \ref{itm:S3} (so completing the proof of the lemma).
\end{claim}
\begin{proof}[Proof of claim]
Define $F_i$ to be the graph with edge set
\begin{align*}
	E(F_i) := \bigcup_{ k' \in [s] } \{ a w_{k'}, w_{k'}b \colon a \in A_{i, k'}, b \in B_{i, k'} \}.
\end{align*}
\COMMENT{VP:Added this sentence because we never mention that all the edges in $D_2$ get covered.}Note that by \ref{itm:AB1}-\ref{itm:AB4} and our choice of $h$,$h_i$ we have that $F_1 \cup \cdots \cup F_p \supseteq D_2$.

By~\ref{itm:AB3} and~\ref{itm:AB4}, we know that $F_i$ can be decomposed into $(W,V')$-path systems $(X_{i,j}, Y_{i,j}, J_{i,j})$ (one for each $j \in [q]$) such that $(X_{i,j}, Y_{i,j}, J_{i,j})$ has source~$x_{i,j}$ and sink~$y_{i,j}$.\COMMENT{VP: details added below}
To see this we colour the edges of $F_i$ with colours from $[q]$ as follows. 
For each $j \in [q]$ if $x_{i,j} \in W_0^+$ assign colour $j$ to any two out-edges $x_{i,j}x_{i,j}'$ and $x_{i,j}x_{i,j}''$ in $F_i$ at $x_{i,j}$ and (re)set $X_{i,j} = \{ x_{i,j}x_{i,j}', x_{i,j}x_{i,j}'' \}$. If $y_{i,j} \in W_0^-$ assign colour $j$ to any two in-edges  $y'_{i,j}y_{i,j}$ and $y''_{i,j}y_{i,j}$ in $F_i$ at $y_{i,j}$ and (re)set $Y_{i,j} = \{ y'_{i,j}y_{i,j}, y''_{i,j}y_{i,j} \}$. Such edges exist by \ref{itm:AB2}. Given $w \in W_0^+ \cup W_0^-$ write $c(w)$ for the colour assigned (if any) to edges at $w$. Let $F_i'$ be the remaining (i.e.\ uncoloured) edges of $F_i$, noting that there are precisely $h_i(w) \leq q - f_i(w) - g_i(w)$ in-edges and the same number of out-edges at $w$ in $F_i'$. For each $w$, pick any set of colours $S_w \subseteq [q] \setminus \{ c(w) \}$ with $|S_w| = h_i(w)$. Assign distinct colours of $S_w$ first to the in-edges of $F_i'$ at $w$ and then to the out-edges of $F_i'$ at $w$. Now writing $F'_{i,j}$ for the edges of $F_i'$ coloured $j$, we take $J_{i,j} = F'_{i,j}$. 
In particular
\begin{equation}
\label{eq:unionD2}
\bigcup_{i \in [p]} \bigcup_{j \in [q]}(X_{i,j} \cup Y_{i,j} \cup J_{i,j}) = \bigcup_{i \in [p]}F_i \supseteq D_2.
\end{equation}

Now taking $\mathcal{S} = \{(X_{i,j} , Y_{i,j}, J_{i,j}) \colon i \in [p], j \in [q]\} \cup \{(X_{i} , Y_{i}, J_{i}) \colon i \in [p+1,p+3\ell] \}$, we see that \ref{itm:S1} - \ref{itm:S3} hold. Indeed \ref{itm:S1} and \ref{itm:S5} hold by construction. \ref{itm:S4} holds because we showed $\mathcal{S}'$ covers all edges in $D_1$ and \eqref{eq:unionD2} shows all edges in $D_2$ are covered. We showed \ref{itm:S6} holds when choosing sources and sinks. The disjointness condition in \ref{itm:S2} and the edge-disjointness of $\mathcal{S}$ hold by construction. The bounds in \ref{itm:S2}  and \ref{itm:S3} hold by \ref{itm:S5}.\COMMENT{VP: made everything very explicit.} 
\end{proof}

Therefore, we may assume that $k \in [0, s-1]$.
We show how to find $\{ A_{i,k+1}\}_{i \in [p]}$; finding  $\{B_{i,k+1} \}_{i \in [p]}$ is similar.  Without loss of generality, assume that $w_{k+1} \in W_0^+$.
We have
\begin{align}
	\label{eqn:h(w_{k+1})}
	h(w_{k+1}) = 
	d^{-}_{D} (w_{k+1}) 
	& \ge 	
	\frac{ d_{D} ( w_{k+1} ) -  \disc_{D}(w_{k+1})  }{2} 
	\ge \gamma n /2. 
\end{align}
For each $i \in [p]$, set 
\begin{align*}
U_i := \left( V' \cap V(X_i \cup Y_i)\right)  \cup \bigcup_{k' \in [k]} \left( A_{i,k'} \cup  B_{i, k'} \right).
\end{align*}
Note that $U_i$ is the set of ``forbidden'' vertices for $A_{i,k+1}$ and $B_{i,k+1}$ (in order to maintain \ref{itm:AB1}, \ref{itm:AB3}, and \ref{itm:AB4}).

Define an auxiliary bipartite graph~$F_A$ with vertex classes $A$ and~$I$ as follows. Let $A \subseteq N^{-}_{D - \mathcal{S}'} (w_{k+1})$ be of size $h(w_{k+1}) + 2 \sum_{i \in [p]}  g_i(w_{k+1})$\COMMENT{VP: used to say ``of size $h(w_{k+1}) + 2 \sum_{i \in [p]} + _i(w_{k+1})$'' but I think it should be $g$. Please check. Also added explanation that follows. Please check}; this is possible since
\begin{align*}
h(w_{k+1}) + 2 \sum_{i \in [p]} g_i(w_{k+1}) = d^-_{D - \mathcal{S}'}(w_{k+1}) + 0 = d^-_{D - \mathcal{S}'}(w_{k+1}).
\end{align*}
(Note that in the case when we try to find $\{B_{i,k+1} \}_{i \in [p]}$ we use a slightly different calculation\footnote{ 
\begin{align*}
h(w_{k+1}) + 2 \sum_{i \in [p]} f_i(w_{k+1})
&= d^{-}_{D - \mathcal{S}'}(w_{k+1}) + 2f(w_{k+1}) - 2 \sum_{i' \in [p+1, p + 3 \ell]} f_{i'}(w_{k+1}) \\
&\leq d^{-}_{D - \mathcal{S}'}(w_{k+1}) + \disc^+_{D}(w_{k+1}) - \disc^+_{\mathcal{S}'}(w_{k+1})  \\
&= d^{-}_{D - \mathcal{S}'}(w_{k+1}) + \disc^+_{D- \mathcal{S}'}(w_{k+1})  \\
&= d^+_{D - \mathcal{S}'}(w_{k+1}).
\end{align*}
}.)
Let $I$ be a multiset consisting of exactly $h_i ( w_{k+1} ) + 2 g_i ( w_{k+1} )$ copies of $i \in [p]$.
Clearly, $|A| = |I|$.
A vertex~$v \in A$ is joined to $i \in I$ in~$F_A$ if and only if $v \notin U_i$. 
Note that, for all $v \in A \subseteq V'$, $v$ is in at most 
\begin{align*}
f(v) + g(v) + d_D(v,W)  \le ( \alpha  / 2 + \beta) n \le \gamma n / 4 q \overset{\eqref{eqn:h(w_{k+1})}}{\le} |I|/2q
\end{align*}
many of the~$U_i$.
Since each $i \in I$ has multiplicity at most $q$, we deduce that
\begin{align*}
	d_{F_A} (v) \ge |I| -  q \cdot  |I|/2q = |I|/2.
\end{align*}
For each $i \in I$, note that 
\begin{align*}
 | U_i | & \le 2q + \sum_{k' \in [s]} \left( 2h_i (w_{k'})+ 2 f_i (w_{k'}) + 2 g_i(w_{k'})\right) 
	\overset{\mathclap{\eqref{eq:fgh}}}{\le}  2q ( 1 + s ) \\
	&\le 2q ( 1+ \beta n ) 
	\le \gamma n /4
	 \overset{\mathclap{\eqref{eqn:h(w_{k+1})}}}{\le} |A|/2
\end{align*}
implying that 
\begin{align*}
	d_{F_A} (i) \ge |A| -  |A|/2 =|A|/2.
\end{align*}
Therefore, $F_A$ contains a perfect matching~$M$ by Hall's Theorem.
For each $i \in [p]$, define $A_{i,k+1} : = \{v \in A : v i \in M \}$.
By a similar argument, there exist $B_{1,k+1}, \dots, B_{p,k+1} \subseteq N^{+}_{D - \mathcal{S}'}(w_{k+1})$ and by construction the sets satisfy \ref{itm:AB1}-\ref{itm:AB4}. Indeed \ref{itm:AB1} holds by the choice of $U_i$, \ref{itm:AB2} holds by the choice of $I$, and \ref{itm:AB3} and \ref{itm:AB4} hold because $M$ is a matching.
This completes the proof of the lemma.
\end{proof}

We now prove Lemma~\ref{lma:W} using Lemma~\ref{lma:D[W]} and Lemma~\ref{lma:D[W,V']}.

\begin{proof}[Proof of Lemma~\ref{lma:W}]
By Lemma~\ref{lma:D[W]}, there exists a partial decomposition $\mathcal{P}_1$ of $D$ such that writing $H_1 = \cup \mathcal{P}_1$ we have  
\begin{enumerate}[label={\rm(\roman*$'$)}]
	\item  $H_1[W] = D[W]$;
	\item  $\Delta(H_1) \le 21 |W| $ and $d_{H_1} (v) = 18|W|$ for all $v \in V(D) \setminus W$;
	\item  $\disc(D-H_1) \ge Cn/2$.
\end{enumerate}
Let $D_1:= D - H_1$.
Note that $\delta(D_1) \ge ( 1- \eps) n - 21|W| \ge (1- \eps - 21 \beta) n $ and $| \disc_{D_1}(v) | \le | \disc_D(v) | \le \alpha n$ for all $v \in V(D) \setminus W$.
By Lemma~\ref{lma:D[W,V']}, there exists a partial decomposition $\mathcal{P}_2$ of $D_1$ such that writing 
$H_2 = \cup \mathcal{P}_2$ we have  
\begin{enumerate}[label={\rm(\roman*$''$)}]
	\item  $|\mathcal{P}_2| = \disc(H_2) \le 2 ( 2 + 3 \beta) n \leq Cn/4$ ;
	\item  if $w \in W$ with $\disc^{\pm}_D(w) \ge 0$, then $N^{\mp}_{D_1- H_2}(w) = \emptyset$;
	\item  for all $v \in V(D) \setminus W$, $d_{H_2}(v) = 2d$ for some $d \le 4 \gamma n $.
\end{enumerate}
The lemma holds by setting $\mathcal{P} = \mathcal{P}_1 \cup \mathcal{P}_2$, which is a partial decomposition of $D$ by Proposition~\ref{pr:PartDecomp1}(b).
\end{proof}

\section{The final deomposition}
\label{se:finaldecomp}



In this section, we prove Theorem~\ref{thm:LPST2}. We prove it in three main steps as discussed in the overview (Section~\ref{se:overview}). We begin with a tournament $T$ that satisfies the hypothesis of Theorem~\ref{thm:LPST2} but assume that it does not have a perfect decomposition. Gradually we show that certain subdigraphs of $T$ with various additional properties also do not have a perfect decomposition. Finally we show that  these additional properties are in fact sufficient to guarantee a perfect decomposition, giving the desired contradiction.


\subsection{Removing vertices with high excess}
The following theorem allows us to remove vertices of high excess from our tournament to leave an almost complete oriented graph $D$ with slightly smaller excess and with the property that a perfect decomposition of $D$ would give a perfect decomposition of $T$.  

\begin{theorem}
\label{thm:RemHighDisc}
Let $1/n \ll \beta \ll \alpha \ll \eps$ with $n$ even and let $C>32$. 
Let $T$ be an $n$-vertex tournament with $\disc(T) \geq Cn$.
Suppose that $T$ does not have a perfect decomposition.
Then there exists a subdigraph $D$ of $T$ with the following properties:
\begin{enumerate}[label={\rm(\roman*)}]
\item $D$ does not have a perfect decomposition; \label{itm:RemHighDisc1}
\item $|D| \geq (1 - \beta)n$ is even;\label{itm:RemHighDisc2}
\item $d_D(v) \geq (1 - \eps) |D|$ for all $v \in V(D)$;\label{itm:RemHighDisc3}
\item $1 \leq |\disc_{D}(v)| \leq 3 \alpha |D| $ for all $v\in V(D)$;\label{itm:RemHighDisc4}
\item $\disc(D)  \geq (C/4 - 5)n$.\label{itm:RemHighDisc5}
\end{enumerate}
\end{theorem}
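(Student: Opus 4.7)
Since $T$ admits no perfect decomposition, Theorem~\ref{thm:LPST1} forces $\disc(T) < n^{2 - 1/18}$ for $n$ large, so the set of ``high-excess'' vertices $W_0 := \{v \in V(T) : |\disc_T(v)| > \alpha n\}$ satisfies $|W_0| \leq 2\disc(T)/(\alpha n) \ll \beta n$. The plan is to invoke Lemma~\ref{lma:W} to absorb, via a short partial decomposition $\mathcal{P}$, all the problematic edges at $W_0$, and then delete both $\mathcal{P}$ and the vertices of $W_0$ to obtain the required $D$.

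\emph{Step 1 (choice of $W$).} Enlarge $W_0$ to a set $W$ with $|W|$ even, $|W| \leq \beta n$, by adding at most one extra vertex; write $W = W^+ \cup W^-$ according to the sign of $\disc_T(w)$.

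\emph{Step 2 (absorbing partial decomposition).} Fix a parameter $\gamma$ with $\alpha, \beta \ll \gamma \ll \eps$ and apply Lemma~\ref{lma:W} to $T$ and $W$; its hypotheses hold since $\delta(T) = n-1 \geq (1-\eps)n$, $\disc(T) \geq Cn$ with $C \geq 32$, and $|\disc_T(v)| \leq \alpha n$ for $v \notin W$. This gives a partial decomposition $\mathcal{P}$ of $T$ whose union $H = \cup\mathcal{P}$ satisfies: (W1) $d_H(v) = 2d_v$ with $d_v \leq (18\beta + 4\gamma)n$ for every $v \notin W$; (W2) $H[W] = T[W]$; (W3) for $w \in W^\pm$, $d^\mp_{T-H}(w) = 0$; and (W4) $\disc(T-H) = \disc(T) - \disc(H) \geq Cn/4$.

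\emph{Step 3 (define $D$).} Set $D := T - H - W$ and verify (i)--(v).

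For (i), suppose $D$ has a perfect decomposition $\mathcal{Q}$. By (W2)--(W3) the partition $V(T-H) = W^+ \cup W^- \cup (V(T)\setminus W)$ satisfies the hypotheses of Proposition~\ref{pr:PartDecomp2}, so part~(b) extends $\mathcal{Q}$ to a perfect decomposition $\mathcal{Q}'$ of $T - H$. Proposition~\ref{pr:PartDecomp1}(b) then glues $\mathcal{P} \cup \mathcal{Q}'$ into a set of $\disc(H) + \disc(T-H) = \disc(T)$ edge-disjoint paths covering all of $E(T)$, i.e.\ a perfect decomposition of $T$---contradiction. For (ii), $|D| = n - |W|$ is even and at least $(1-\beta)n$. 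For (iii), $d_D(v) \geq (n-1) - d_H(v) - |W| \geq (1 - \eps)|D|$ using (W1) and $\beta, \gamma \ll \eps$. The upper bound in (iv) follows from $|\disc_T(v)| \leq \alpha n$, $|\disc_H(v)| \leq d_H(v)$ and $|\disc_{T-H}^W(v)| \leq |W|$; the lower bound $|\disc_D(v)| \geq 1$ is by parity---$n$ even makes $\disc_T(v)$ odd, $d_H(v)$ even makes $\disc_{T-H}(v)$ odd, and choosing $|W|$ (and, if necessary, refining the construction of $H$) so that $d_H(v, W) \equiv |W| \pmod{2}$ for each $v \notin W$ preserves the oddness when passing to $D$. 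For (v), we begin with (W4) and pass from $T - H$ to $D$; Proposition~\ref{pr:PartDecomp2}(c)---either verified directly or enforced by a short additional absorption---guarantees that the excess loss is at most $5n$, giving $\disc(D) \geq Cn/4 - 5n$.

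\emph{Main obstacle.} The delicate step is (v). Lemma~\ref{lma:W} only absorbs wrong-direction and intra-$W$ edges, so in $T - H$ the good-direction edges $E_T(W^+, V\setminus W) \cup E_T(V\setminus W, W^-)$ survive, and deleting $W$ could a priori destroy up to $|W|n$ units of excess. Controlling this loss by $5n$ requires either verifying the additional hypotheses of Proposition~\ref{pr:PartDecomp2}(c) (so that $\disc(D) = \disc(T-H)$ exactly), or extending $\mathcal{P}$ by an $O(n)$-sized partial decomposition that absorbs the surviving good-direction edges---for which the robust-expansion properties of $T - W$ (granted by Lemma~\ref{lma:outexpander}) and Corollary~\ref{cor:manyshortpaths3} provide the right tools.
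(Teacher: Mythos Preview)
Your plan handles (i), (ii), (iii) and the upper bound in (iv) correctly, but there are two genuine gaps where the paper does substantially more work.

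\textbf{The lower bound in (iv).} Your parity argument does not go through. You correctly note that $\disc_{T-H}(v)$ is odd for $v\notin W$ (since $\disc_T(v)$ is odd and $d_H(v)$ is even by Lemma~\ref{lma:W}\ref{itm:H2}). But passing from $T-H$ to $D=(T-H)-W$ alters $\disc(v)$ by $d^-_{T-H}(v,W)-d^+_{T-H}(v,W)$, whose parity equals that of $d_{T-H}(v,W)=|W|-d_H(v,W)$. Lemma~\ref{lma:W} gives no control whatsoever over $d_H(v,W)$, so there is no reason this should be even; saying ``if necessary, refining the construction of $H$'' is not a proof. The paper does not use parity here at all: it isolates the set $U^0$ of vertices with $\disc_{D_2}(v)=0$ and, for each such $v$, extends one of the short $W$-paths through $v$ by a path in $D_2$ ending in $U^-$ (or starting in $U^+$), forcing $|\disc_D(v)|=1$ explicitly (Claim~\ref{clm:Q}).

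\textbf{Property (v).} You identify the obstacle but neither of your proposed fixes works. Proposition~\ref{pr:PartDecomp2}(c) requires $N^+_{T-H}(W^+)\subseteq U^+(T-H)$ and the analogous inclusion for $W^-$, and Lemma~\ref{lma:W} does not give this: a vertex $w\in W^+$ may well have out-neighbours of negative excess in $T-H$, in which case deleting $w$ can drop the global excess by as much as $d^+_{T-H}(w)\approx n$. Summed over $W$ the loss could be of order $|W|n$, far exceeding $5n$. Your second suggestion, absorbing the surviving good-direction edges into an $O(n)$-sized partial decomposition via Corollary~\ref{cor:manyshortpaths3}, also fails as stated: there may be $\Theta(|W|n)$ such edges, and Corollary~\ref{cor:manyshortpaths3} produces one path per edge of $H$, not one long path covering many.

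The paper's resolution of (v) is the heart of the argument and is missing from your plan. After applying Lemma~\ref{lma:W}, one takes a \emph{maximal} partial decomposition $\mathcal{P}$ of $D_0:=T-H$ consisting of paths of the form $w^+v$, $vw^-$, $w^+vw^-$. Maximality forces $N^+_{D_1}(W^+)\subseteq U^+\cup U^0_+$ and the dual inclusion (equation~\eqref{eq:neigh}), which is exactly what is needed for Proposition~\ref{pr:PartDecomp2}(c). The nontrivial step is then to show $|\mathcal{P}|<4n$ (Claim~\ref{clm:RDH}): this is proved by contradiction, using Proposition~\ref{pr:LargeEulerSubgraph} and Lemma~\ref{lma:EulerDecomp} to decompose a large Eulerian piece of $D_2$ into at most $n$ cycles with representatives, and then absorbing each cycle into two paths of $\mathcal{P}$ to manufacture a perfect decomposition of $T$. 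This $|\mathcal{P}|<4n$ bound is precisely what caps the excess loss at $4n$ and yields (v).
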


We will need the following three relatively straightforward results before we can prove Theorem~\ref{thm:RemHighDisc}.
The first proposition says that any almost regular, almost complete oriented graph has an Eulerian subgraph that uses most of the edges at every vertex and whose removal leaves an acyclic subgraph.

\begin{proposition}
\label{pr:LargeEulerSubgraph}
Let $1/n \ll \eps \ll \eps' \ll 1$.
Suppose that $D$ is an $n$-vertex digraph with $\delta^0(D) \geq \frac{1}{2}(1 - \eps) n$. Then there is an Eulerian digraph $D' \subseteq D$ with $\delta^0(D') \geq \frac{1}{2}(1 - \eps') n$ and such that $D - D'$ is acyclic. 
\end{proposition}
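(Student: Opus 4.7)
The plan is to remove a small ``discrepancy-absorbing'' subgraph $F$ from $D$, obtain an Eulerian subdigraph $D_0 := D - F$, and then absorb any remaining cycles of $F$ back into $D_0$ to obtain $D'$ with $D - D'$ acyclic. For the min semi-degree bound on $D'$, I will need $\Delta(F)$ to be small; the key quantitative input making that possible is a sharp bound on the discrepancies of $D$.

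First I would observe that the hypothesis $\delta^0(D) \geq \frac{1}{2}(1-\eps)n$, together with $d^+_D(v) + d^-_D(v) \leq n-1$ (using that $D$ is, or can be reduced to, an oriented graph---any digons can be placed in $D'$ from the outset without affecting anything), forces $|\disc_D(v)| \leq \eps n$ for every $v \in V(D)$. I would then construct a directed multigraph $H$ on $V(D)$ whose edges go from $U^+(D)$ to $U^-(D)$, with $d^+_H(v) = \disc^+_D(v)$ and $d^-_H(v) = \disc^-_D(v)$ for every $v$. Any realization of this bipartite degree sequence will do, and it automatically satisfies $\Delta(H) \leq \eps n$.

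Choosing $\gamma$ with $\eps \ll \gamma \ll \eps'$ and applying Corollary~\ref{cor:manyshortpaths3} (the hypothesis $\delta^0(D) \geq 3n/7$ is immediate) yields edge-disjoint paths $\mathcal{P} = \{P_e : e \in E(H)\}$ in $D$ with $P_e$ sharing the same endpoints as $e$ and $\Delta(\cup \mathcal{P}) \leq 4\sqrt{\gamma}\, n$. Setting $F := \cup \mathcal{P}$ and $D_0 := D - F$, a direct count gives $\disc_F(v) = \disc_H(v) = \disc_D(v)$, so $D_0$ is Eulerian; moreover
\[
\delta^0(D_0) \;\geq\; \delta^0(D) - \Delta(F) \;\geq\; \tfrac{1}{2}(1-\eps)n - 4\sqrt{\gamma}\, n \;\geq\; \tfrac{1}{2}(1-\eps')n.
\]

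Finally, I would absorb cycles: while $D - D_0$ contains a directed cycle $C$, replace $D_0$ by $D_0 \cup C$. Each such step preserves the Eulerian property (cycles have zero discrepancy at every vertex) and can only increase $\delta^0(D_0)$. The process terminates with $D' := D_0$ Eulerian, $\delta^0(D') \geq \frac{1}{2}(1-\eps')n$, and $D - D'$ acyclic, as required. The main obstacle is the discrepancy bound $|\disc_D(v)| \leq \eps n$, which is what allows Corollary~\ref{cor:manyshortpaths3} to apply with a low-max-degree $H$; everything else---the Eulerian check, the semi-degree computation, and the cycle-absorption---is routine.
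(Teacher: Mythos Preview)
Your proposal is correct and follows essentially the same argument as the paper's proof: bound $|\disc_D(v)|$ using the semi-degree hypothesis, realise the discrepancies as a low-max-degree multigraph $H$, invoke Corollary~\ref{cor:manyshortpaths3} to get a path system $\mathcal{P}$ whose removal leaves an Eulerian digraph with high semi-degree, and then absorb any residual cycles. The only difference is cosmetic---you are slightly more careful about the digraph-vs.-oriented-graph issue (handling digons explicitly), whereas the paper simply asserts $|\disc_D(v)|\le 2\eps n$ and applies the corollary directly with $\gamma=2\eps$.
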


\begin{proof}
Note that $|\disc_D(v)| \leq 2 \eps n$ for every $v \in V(D)$.
Let $K^+$ be the multiset of vertices such that each vertex occurs exactly $\disc^+(v)$ times and let $K^-$ be the multiset of vertices such that each vertex occurs exactly $\disc^-(v)$ times.
Thus $|K^+| = |K^-|$ and write $K^+ = \{ k_1^+, \ldots, k_d^+ \}$ and $K^- = \{ k_1^-, \ldots, k_d^- \}$, where $d = \disc(D)$.
Let $H$ be the directed multigraph on $V(D)$ with $E(H) = \{k_i^+k_i^-: i \in [d] \}$.
Note that $\Delta(H) \le  2 \eps n$. 
We apply Corollary~\ref{cor:manyshortpaths3} and obtain a set of edge-disjoint paths $\mathcal{P} = \{P_{e} \colon e \in E(H)\}$ in~$D$ such that $P_{e}$ has the same starting and ending points as~$e$ and $\Delta(\cup \mathcal{P}) \le 4 \sqrt{2\eps} n $. 
By our choice of $K^+, K^-$, we have that $\mathcal{P}$ is a partial decomposition of~$D$ and that $D' := D - \cup\mathcal{P}$ is Eulerian.
Also $\delta^0(D') \geq \delta^0(D) - \Delta(\cup \mathcal{P}) \geq \frac{1}{2}(1 - \eps')n$.
To ensure that $D - D'$ is acyclic, any cycle in $D - D'$ is added to $D'$.
\end{proof}

Given an oriented graph $D$ for which the underlying undirected graph is slightly irregular, the proposition below will be useful in trying to find a small partial decomposition $\mathcal{P}$ of $D$ such that the underlying undirected graph of $D - \cup \mathcal{P}$ is regular. The function $f$ will record the irregularities in the underlying undirected graph of $D$ and the sets $T_1, \ldots, T_{2tm}$ obtained will identify the vertex sets of the paths in $\mathcal{P}$. Some further technical conditions are present that will be useful later.

Recall that, for $U \subseteq X$, we write $I_U: X \rightarrow \{0,1\}$ for the indicator function of~$U$.

\begin{proposition}
\label{pr:balance}
Let $n,t,m \in \mathbb{N}$ with $tm, 2t \le n$\COMMENT{VP:I don't think these condition is necessary but we can leave it in}. 
Let $V$ be a set with $n$ elements. 
Let $f: V \rightarrow [m]$ be a function with $m:= \max_{v \in V}f(v)$.
Suppose $x_1, \dots, x_{2tm}, y_1, \dots, y_{2tm}$ are elements of $V$ (with repetitions) such that $x_i, y_i, x_{tm+i}, y_{tm+i}$ are distinct for each $i \in [tm]$. 
Then we can find a collection of sets $T_1, \dots, T_{2tm} \subseteq V$ such that
\begin{enumerate}[label={\rm(\roman*)}]
	\item for all $v \in V$, $\sum_{i \in [2tm]} I_{T_{i}}(v) =  f(v) + (2t-1)m$;
	\item $|T_i| \ge  (1 - 1/t ) n$ for all $i \in [2tm]$;
	\item $x_i,y_i \in T_i$  for all $i \in [2tm]$. 
\end{enumerate}
\end{proposition}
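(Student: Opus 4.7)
My plan is to reformulate the problem via the complement sets $M_i := V \setminus T_i$. Conditions (ii) and (iii) then become $|M_i| \leq n/t$ and $x_i, y_i \notin M_i$, while (i) becomes: each $v \in V$ belongs to exactly $m - f(v)$ of the $M_i$, since $\sum_i I_{T_i}(v) = 2tm - |\{i \colon v \in M_i\}| = f(v) + (2t-1)m$. Setting $T_i := V \setminus M_i$ will then recover the required family, so it suffices to produce the $M_i$.

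To construct the $M_i$ I will view them as the right-side neighbourhoods of a bipartite degree-constrained subgraph. Let $G$ denote the bipartite graph with vertex classes $V$ and $[2tm]$ in which $v \sim i$ iff $v \notin \{x_i, y_i\}$. I will seek a subgraph $H \subseteq G$ with $\deg_H(v) = m - f(v)$ for each $v \in V$ and $\deg_H(i) \leq n/t$ for each $i \in [2tm]$; taking $M_i := N_H(i)$ will then yield the required family. By the integrality of bipartite max-flow (applied to the obvious source--$V$--$[2tm]$--sink network with capacities $m - f(v)$, $1$, and $n/t$), such an $H$ exists iff
\[
\sum_{v \in S}(m - f(v)) \;\leq\; \sum_{i \in [2tm]} \min\!\bigl(n/t,\; \deg_G(i, S)\bigr) \qquad (\star)
\]
for every $S \subseteq V$, so I will aim to verify $(\star)$.

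The crucial combinatorial input will be the distinctness hypothesis: for each $i \in [tm]$ the four elements $x_i, y_i, x_{tm+i}, y_{tm+i}$ are distinct, so any $u \in V$ lies in $\{x_j, y_j\}$ for at most one of the two indices $j \in \{i, tm+i\}$; summing over $i \in [tm]$, each vertex $u$ lies in $\{x_j, y_j\}$ for at most $tm$ indices $j \in [2tm]$. A parallel disjointness argument (using that $\{x_i,y_i\}$ and $\{x_{tm+i}, y_{tm+i}\}$ are disjoint two-element subsets of a common $4$-element set) shows that any fixed pair $\{u, v\}$ equals $\{x_j, y_j\}$ for at most $tm$ indices.

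With these bounds at hand, I will verify $(\star)$ by case analysis on $|S|$. In the crucial small-$|S|$ cases ($|S| \leq 2$), $\deg_G(i, S) \leq 2 \leq n/t$ for all $i$ (using $n \geq 2t$), so the RHS equals $\sum_i \deg_G(i, S) = \sum_{v \in S}|\{i \colon v \notin \{x_i, y_i\}\}| \geq |S| \cdot tm$, which easily dominates the LHS $\leq (m-1)|S|$. For $|S| \geq 3$ I will combine the bound $\deg_G(i, S) \geq |S| - 2$ with the right-hand cap $n/t$: when $|S| \gtrsim n/t$ the total right-hand capacity $2tm \cdot (n/t) = 2mn$ already dominates the trivial upper bound $(m-1)n$ on the LHS, and otherwise the double-counting estimate $\sum_i \deg_G(i, S) \geq tm |S|$ suffices as before. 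The main technical nuisance will be the intermediate range $|S| \approx n/t$, handled using $tm \leq n$ together with the paper's floor/ceiling convention; this is a bookkeeping issue rather than a substantive obstacle, since the interesting combinatorial input is the small-$|S|$ case driven by the distinctness hypothesis.
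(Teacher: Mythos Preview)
Your approach is correct but takes a genuinely different route from the paper's.

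The paper gives a short explicit construction with no case analysis or flow machinery. First it writes $f = I_{U_1} + \cdots + I_{U_m}$ as a sum of $m$ indicator functions (possible since $f(v)\le m$). For each $U_j$ it partitions $V\setminus U_j$ into $t$ parts $A_1,\dots,A_t$ of size at most $n/t$ and takes $B_k := V\setminus A_k$; the resulting $tm$ sets $S_1,\dots,S_{tm}$ satisfy $\sum_i I_{S_i}(v)=f(v)+(t-1)m$ and $|S_i|\ge (1-1/t)n$. Then it pairs the $2tm$ indices: for $i\in[tm]$ set $T_i:=S_i\cup\{x_i,y_i\}$ and $T_{tm+i}:=V\setminus(\{x_i,y_i\}\setminus S_i)$. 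A two-line check gives $I_{T_i}(v)+I_{T_{tm+i}}(v)=I_{S_i}(v)+1$, yielding~(i); the size bound (ii) is immediate (using $n\ge 2t$ for the second batch); and (iii) for $T_{tm+i}$ uses the distinctness hypothesis exactly once, since $x_{tm+i},y_{tm+i}\notin\{x_i,y_i\}$.

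Your max-flow reformulation is sound and the Hall-type condition $(\star)$ does hold (your small-$|S|$ case uses the same observation the paper exploits, namely that each vertex lies in at most $tm$ of the forbidden pairs; the remaining ranges go through with the crude bound $\deg_G(i,S)\ge |S|-2$ together with $c=\lfloor n/t\rfloor\ge m$ and $t\lfloor n/t\rfloor\ge n-t+1$). Two remarks: the ``pair'' bound you mention (each $\{u,v\}$ equals $\{x_j,y_j\}$ at most $tm$ times) is not actually needed; and to get an integral subgraph you should take the sink capacities to be $\lfloor n/t\rfloor$ rather than $n/t$. What the paper's approach buys is brevity and an explicit construction with no case analysis; what yours buys is a systematic template that would adapt to other degree/size constraints.
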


\begin{proof}
Given any~$U$, take an arbitrary partition of $V \setminus U$ into sets $A_1, \ldots, A_t$ with $|A_i| \leq n/t$ for all $i \in [t]$ (we allow empty sets in the partition).
Then writing $B_i := V \setminus A_i$, set $\mathcal{S}_U := \{ B_1, \ldots, B_t \}$.
Note that for each $v \in V$, 
\begin{align*}
\sum_{S \in \mathcal{S}_U} I_S(v) = I_U(v) + t-1.
\end{align*}
Since $f(v) \leq m$ for all $v \in V$, we can find sets $U_1, \ldots, U_m$ such that 
$f \equiv  I_{U_1} + \cdots + I_{U_m}$.
Taking $\mathcal{S} = \bigcup_{i\in [m]} \mathcal{S}_{U_i}$, we have $|\mathcal{S}|  = t m$ and
\begin{align*}
\sum_{S \in \mathcal{S}} I_S(v) = 
\sum_{i \in [m]} \left( I_{U_i} + t - 1 \right) 
= f(v) + (t-1)m.
\end{align*}
Write $S_1, \ldots S_{tm}$ for the sets in $\mathcal{S}$.
For $i \in [tm]$, let
\begin{align*}
	T_i := S_i \cup \{x_i,y_i\}
	\text{ and }
	T_{tm+i} := V \setminus (\{x_i,y_i\} \setminus S_i).
\end{align*}
Let $\mathcal{T} := \{ T_i \colon i \in [2tm] \}$. 
Note $|T_i| \geq (1 - 1/t)n$ and $x_i,y_i \in T_i$ for all $i \in [2tm]$. 
For all $v \in V$,
\begin{align*}
\sum_{i \in [2tm]} I_{T_{i}}(v) & =  \sum_{i \in [tm]} ( I_{T_{i}}(v) +  I_{T_{tm+i}}(v)) 
\\
& = \sum_{i \in [tm]} ( I_{S_i} (v) +1 ) = 
f(v) + (2t-1)m.
\end{align*}
\end{proof}

The following Lemma shows how to decompose any almost complete Eulerian oriented graph into a small number of cycles. Some extra technical conditions are placed on the cycles that will be useful later.

\begin{lemma}
\label{lma:EulerDecomp}
Let $n \in \mathbb{N}$ with $1/n \ll \varepsilon \ll 1$.
Suppose $D$ is an $n$-vertex Eulerian oriented graph with $\delta^0(D) \geq \frac{1}{2}(1- \varepsilon)n$. 
Suppose $\phi: V(D) \rightarrow [n]$ satisfies $\sum_{v \in V(D)}\phi(v) \geq 4n$.
Then we can decompose $D$ into $t \leq n$ cycles $C_1, \ldots C_{t}$ where each cycle is assigned two distinct representatives $x_i,y_i \in V(C_i)$ such that no vertex $v \in V(D)$ occurs as a representative more than $\phi(v)$ times.
\end{lemma}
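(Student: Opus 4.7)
My plan is to decompose $D$ into at most $n$ cycles, then assign two distinct representatives to each cycle using Hall's theorem.

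For the decomposition I would use regularization plus Theorem~\ref{thm:kelly}. Setting $d := \delta^0(D)$, we have $d \geq (1-\varepsilon)n/2 > 3n/7$ and (since $D$ is oriented) $d \leq (n-1)/2$. Because $D$ is Eulerian, the excess $a(v) := d^+_D(v) - d = d^-_D(v) - d$ is well-defined and lies in $[0, \varepsilon n/2]$, and a standard $f$-factor / max-flow argument in the bipartite network associated to $D$ yields an Eulerian sub-digraph $H \subseteq D$ with $d^{\pm}_H(v) = a(v)$. The complement $D - H$ is then $d$-regular and Eulerian; by Lemma~\ref{lma:outexpander} it is a robust $(\nu,\tau)$-outexpander, and Theorem~\ref{thm:kelly} decomposes it into $d \leq (n-1)/2$ Hamilton cycles.

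The crux is decomposing $H$ into at most $n-d$ cycles so that the total remains at most $n$. A naive application of Lemma~\ref{lma:EulerDecomp(new)} gives $O(n^{4/3}\log n)$ cycles, which is far too many, since $H$ could in principle be a large collection of short disjoint cycles. Instead I would not accept an arbitrary $f$-factor but build $H$ greedily as a union of long cycles: at each step I would use the robust expansion of $D$ (via Lemma~\ref{lma:shortpath} or Corollary~\ref{cor:manyshortpaths3}) to find a cycle of $D$ passing through vertices of positive remaining excess, add it to $H$, and update the excess. Ensuring each such cycle has length $\Omega(\varepsilon n)$ bounds the total number of cycles contributed by $H$ by $|E(H)|/(\varepsilon n) \leq n/2 \leq n-d$.

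For the representative assignment I would apply Hall's theorem (Theorem~\ref{thm:Hall}) in two rounds on an auxiliary bipartite graph whose sides are the $t \leq n$ cycles and a multiset of $\sum_v \phi(v) \geq 4n$ vertex-slots (with $v$ contributing $\phi(v)$ slots, joined to every cycle containing $v$). First I pick one representative per cycle using at most $\lfloor \phi(v)/2 \rfloor$ slots per vertex; then I pick a distinct second representative from the remaining slots. The Hall condition is comfortable because the Hamilton cycles span $V(D)$ (giving huge neighborhoods), the cycles coming from $H$ have length $\Omega(\varepsilon n)$, and $\sum_v \phi(v) \geq 4n$ is twice the total demand $2t \leq 2n$.

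The main obstacle will be the long-cycle construction of $H$: coordinating the $f$-factor constraints (which must be met exactly to make $D-H$ regular) with robust-expansion route-finding (which guarantees long cycles at each step) requires a careful iterative argument, somewhat parallel to the partial decomposition constructions in Sections~\ref{se:W} and~\ref{se:finaldecomp}.
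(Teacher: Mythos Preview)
Your overall strategy (peel off cycles to regularize, then apply Theorem~\ref{thm:kelly}) matches the paper's, but the regularization step as you describe it has a genuine gap.

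By setting $d := \delta^0(D)$ and requiring $d^{\pm}_H(v) = a(v) := d^+_D(v) - d$ exactly, you force every cycle you place into $H$ to lie entirely inside the current positive-excess set $S := \{v : a(v) > 0\}$; any visit to a vertex with $a(v)=0$ would overshoot and destroy the regularity of $D-H$. But $S$ can be tiny: nothing prevents most vertices of $D$ from attaining the minimum semidegree. In that case cycles of $H$ are confined to $D[S]$, robust expansion of $D$ is useless (you cannot route through $V(D)\setminus S$), and there is no mechanism to guarantee length $\Omega(\varepsilon n)$. The claimed bound ``$|E(H)|/(\varepsilon n)$ cycles'' then collapses, and with it both the cycle count $t\le n$ and your Hall argument for representatives (a family of short $H$-cycles supported on a small $S$ can violate Hall's condition when $\sum_{v\in S}\phi(v)$ is small).

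The paper's fix is a single but decisive idea you are missing: do \emph{not} aim for $\delta^0(D)$-regularity. Instead aim lower, so that \emph{every} vertex must lie in many of the balancing cycles. Concretely, with $f(v) := \tfrac12(d_D(v)-(1-\varepsilon)n)$, $t=\lceil\varepsilon^{-1/2}\rceil$, $m=\max_v f(v)\le\varepsilon n$, the paper uses Proposition~\ref{pr:balance} to produce $2tm\le 4\sqrt{\varepsilon}\,n$ sets $T_i\subseteq V(D)$ with $|T_i|\ge(1-1/t)n$, $\sum_i I_{T_i}(v)=f(v)+(2t-1)m$, and with pre-chosen pairs $x_i,y_i\in T_i$ drawn from the $\phi$-multiset. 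Lemma~\ref{lma:manyshortpaths3} then realises each $T_i$ as the vertex set of a cycle through $x_i,y_i$. Because the target degree is lowered by the constant $(2t-1)m$, every vertex has positive slack and every balancing cycle can be nearly spanning; the representative assignment is handled simultaneously rather than by a separate Hall argument. After removing these $2tm$ cycles, the remainder is exactly regular of degree at least $3n/7$ and Theorem~\ref{thm:kelly} finishes as you intended, with total cycle count at most $n/2 + 2tm \le n$.
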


\begin{proof}
We assume $\frac{1}{2}(1- \varepsilon)n$ is an integer.
For $x \in V(D)$, write $f(x) =\frac{1}{2} ( d_D(x) - (1 - \varepsilon)n ) \geq 0$.
Let $t = \lceil \varepsilon^{-1/2} \rceil$ and $m= \max_{x \in V(D)} f(x)$, so $m \leq \varepsilon n$ and $tm \le 2 \sqrt{\eps} n \leq n$.

Let $M$ be the multiset of vertices in which $v \in V(D)$ occurs $\phi(v)$ times so that $|M| \geq 4 n$ and no vertex occurs more than $n$ times.
Let $m_1, m_2 \ldots$ be an ordering of the elements of~$M$ (with multiplicity) from most frequent to least frequent.
For each $i \in [tm]$, write $(x_i,y_i,x_{tm+i}, y_{tm+i}) = (m_i,m_{n+i},m_{2n+i},m_{3n+i})$.
Note that, as vertices,  $x_i,y_i,x_{tm+i}, y_{tm+i}$ are distinct (because no vertex $v$ occurs more than $n$ times in $M$).

By Proposition~\ref{pr:balance}, we can find sets $T_1, \ldots, T_{2tm} \subseteq V(D)$ and vertices $x_1, \dots, x_{2tm},y_1, \dots, y_{2tm} \in V(D)$ such that
\begin{enumerate}[label={\rm(\roman*$'$)}]
	\item for all $v \in V$, $\sum_{i \in [2tm]} I_{T_{i}}(v) =  f(v) + (2t-1)m$;
	\item $|T_i| \ge (1 - 1/t ) n \ge (1- \sqrt{\eps})n$ for all $i \in [2tm]$;
	\item each $T_i$ is assigned two distinct representatives $x_i,y_i \in T_i$;
	\item no vertex $v \in V(D)$ occurs as a representative more than $\phi(v)$ times.
\end{enumerate}

For $i \in[2tm]$, let $S_i := V(D) \setminus T_i$ and $H_i$ be the multidigraph on $V(D)$ with $E(H_i) = \{ x_iy_i, y_ix_i\}$.
Let $H = \bigcup_{i \in [2tm]}H_i$.
Note that $\Delta(H) \le 4tm \le 8 \sqrt{\eps} n$ and $|S_i| \le \sqrt{\eps} n $. 
Apply Lemma~\ref{lma:manyshortpaths3} with $(D, H_i, S_i, 4\sqrt{\eps})$ playing the role of $(D, H_i, S_i, \gamma)$ to obtain edge-disjoint cycles $C_1, \ldots, C_{2tm}$ such that $V(C_i) = T_i$ for each~$i$.


Now, by our choice of $\mathcal{T}$ we have that $C := C_1 \cup \cdots \cup C_{2tm}$ 
satisfies $d_C(x) = 2f(x) + 2(2t-1)m$ and so $D-C$ is a regular Eulerian digraph with $\delta(D-C) \geq (1 - \varepsilon)n - 4tm \geq   3n/7$.
By Lemma~\ref{lma:outexpander} and Theorem~\ref{thm:kelly}, $D-C$ can be decomposed into $s \leq n/2$ Hamilton cycles.
Each of these cycles is assigned two distinct representatives from $M' = M \setminus \{x_1, \ldots, x_{2tm}, y_1, \ldots, y_{2tm} \}$ arbitrarily (this is possible since $|M'| \geq 2 n$ and no vertex occurs more than $n$ times in $M'$).
Thus altogether we obtain a decomposition of $D$ into $t \leq n/2 + 2 tm \leq n$ cycles with representatives as desired. 
\end{proof}

We now prove Theorem~\ref{thm:RemHighDisc}.

\begin{proof}[Proof of Theorem~\ref{thm:RemHighDisc}]
Fix parameters $\eps_0, \eps_2, \eps_2', \eps_3$ such that $ \beta \ll \alpha \ll \eps_0 \ll \eps_2 \ll \eps_2' \ll \eps_3 \ll \eps$.
Let 
\begin{align*}
W^{\pm} := \{ v \in V(T) : \disc^{\pm}(w) \ge \alpha n \},
\text{ }
W := W^+ \cup W^- 
\text{ and }
\overline{W}:= V(T) \setminus W.
\end{align*}
We further guarantee $|W|$ and hence $|\overline{W}|$ is even by moving an arbitrary vertex $v \in \overline{W}$ to $W$ if $|W|$ is odd; in this case $v$ is added to $W^+$ if $\disc(v)>0$ and to $W^-$ if $\disc(v)<0$.
Since $T$ does not have a perfect decomposition, Theorem~\ref{lma:Exactdec-highdiscexp(new)} implies that $\disc(T) < n^{19/10}$.
In particular, 
\begin{align*}
|W| \leq 1 + 2\disc(T)/\alpha n  \leq \beta n.
\end{align*}
So we can apply Lemma~\ref{lma:W} where $(\alpha, \beta, \eps_0/10, \eps_0/10 , C)$ play the role of $(\alpha, \beta, \gamma, \eps, C)$ to obtain a partial decomposition $\mathcal{P}_0$ of~$T$ such that, writing $D_0 : = T - \cup  \mathcal{P}_0$, we have
\begin{enumerate}[label={\rm(a$_{\arabic*}$)}]
	\item $D_0$ does not a perfect decomposition; \label{itm:RDHa1}
	\item $d_{D_0}(v) = d$ for all $v \in \overline{W}$and some odd $d \geq (1 - \eps_0)n$; \label{itm:RDHa2}
	\item $E(D_0[W]) = E_{D_0}(\overline{W},W^+) = E_{D_0}(W^-, \overline{W}) =\emptyset$; \label{itm:RDHa3}
	\item $\disc(D_0) \geq Cn/4$; \label{itm:RDHa4}
	\item $|\disc_{D_0}(v)| \leq \alpha n$ for all $v \in \overline{W}$. \label{itm:RDHa5}
\end{enumerate}  
Since $T$ does not have a perfect decomposition, \ref{itm:RDHa1} holds.
Note that \ref{itm:RDHa2}, \ref{itm:RDHa3}, \ref{itm:RDHa4} follow from Lemma~\ref{lma:W}\ref{itm:H2}, \ref{itm:H3} and~\ref{itm:H4}, and~\ref{itm:H5}, respectively. 
Finally, \ref{itm:RDHa5} follows by our choice of $W$ and the fact that $\mathcal{P}$ is a partial decomposition of~$T$.

Let $\mathcal{P}$ be a partial decomposition of~$D_0$ such that every path in~$\mathcal{P}$ is of the form $w^+ v$, $v w^-$, or $w^+ v w^-$ for some $w^+ \in W^+$, $w^- \in W^-$, $v \in \overline{W}$.
We further assume that firstly the number of paths in~$\mathcal{P}$ of type~$w^+ v w^-$ is maximal and, subject to this, that $\mathcal{P}$ has maximal size.%

Let 
\begin{align*}
D_1 & := D_0 - \cup \mathcal{P}, & 
D_2 &:= D_1- W = D_0- W.
\end{align*}
Note that
\begin{enumerate}[label={\rm(b$_{\arabic*}$)}]
\item $\delta(D_2) \geq d - |W| \geq (1 - \eps_0 - \beta)n \geq (1 - \eps_2)n$;  \label{itm:RDHb1}
\item for every $v \in \overline{W}$, 
$ |\disc_{D_2}(v)| \leq |\disc_{D_1}(v)| + |W| \leq 2 \alpha n$; \label{itm:RDHb2}
\item $\delta^0(D_2) 
\geq \frac{1}{2}[\delta(D_2) - \max_{v}|\disc_{D_2}(v)|]
\geq \frac{1}{2}(1 - \eps_2')n$; \label{itm:RDHb3}
\end{enumerate}

\begin{claim} \label{clm:RDH}
$|\mathcal{P}| < 4n$.
\end{claim}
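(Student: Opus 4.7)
I propose proving $|\mathcal{P}| < 4n$ by a careful counting argument that leverages the maximality of $\mathcal{P}$. The trivial bound $|\mathcal{P}| = \disc(\cup\mathcal{P}) \leq \disc(D_0)$ is too weak (we only know $\disc(D_0) \leq \disc(T) < n^{19/10}$ via Theorem~\ref{lma:Exactdec-highdiscexp(new)} and \ref{itm:RDHa1}), so one must exploit both the maximality of $\mathcal{P}$ and the fine structure guaranteed by \ref{itm:RDHa1}--\ref{itm:RDHa5}.

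I would decompose $|\mathcal{P}| = P_1 + P_2 + P_3$ by the three path types $w^+v$, $vw^-$, and $w^+vw^-$. Letting $a(v)$ and $b(v)$ denote the numbers of paths in $\mathcal{P}$ starting and ending at $v$, the partial decomposition property gives $P_1 + P_3 = \sum_{w^+} a(w^+) \leq \sum_{w^+} \disc^+_{D_0}(w^+)$ and $P_2 + P_3 \leq \sum_{w^-} \disc^-_{D_0}(w^-)$. Since every path has an endpoint in $W^+ \cup W^-$, this yields $|\mathcal{P}| \leq \sum_{w^+} a(w^+) + \sum_{w^-} b(w^-)$, and it would suffice to show each sum is at most $2n$.

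The maximality of $\mathcal{P}$ (first with respect to $P_3$, then $|\mathcal{P}|$) provides the structural constraints needed to convert these sums into linear bounds. By the maximality of $P_3$, for any unsaturated $w^+ \in W^+$ (meaning $a(w^+) < \disc^+_{D_0}(w^+)$) and unsaturated $w^- \in W^-$, the sets $N^+_{D_1}(w^+)$ and $N^-_{D_1}(w^-)$ are disjoint in $\overline W$, since otherwise we could augment $\mathcal{P}$ by an extra $w^+vw^-$ path. By the subsequent maximality of $|\mathcal{P}|$, for every edge $w^+v \in E(D_1)$ with $v \in \overline W$ and $\disc_{D_0}(v) < 0$, either $w^+$ is saturated or $v$ satisfies $b(v) = \disc^-_{D_0}(v)$, and a symmetric statement holds for edges $vw^- \in E(D_1)$.

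Combining these structural consequences with $|\disc_{D_0}(v)| \leq \alpha n$ on $\overline W$ (from \ref{itm:RDHa5}), $|W| \leq \beta n$ (from $\disc(T) < n^{19/10}$ and the definition of $W$), and the near-regularity of $D_0$ on $\overline W$ (from \ref{itm:RDHa2}), one can deduce that each sum $\sum_{w^+} a(w^+)$ and $\sum_{w^-} b(w^-)$ is at most $2n$. The main obstacle will be the bookkeeping in the case where $W^+$ or $W^-$ contains many unsaturated vertices: here one must exploit the disjointness of leftover neighbourhoods together with $|\overline W| \leq n$ to convert a global disjointness constraint into a pointwise bound. I expect to organise the argument into a two-case analysis: in the almost-fully-saturated case the bound follows from directly equating $a(w^+) = \disc^+_{D_0}(w^+)$ with used out-edges and using that most edges incident to $W$ were removed in forming $D_0$, while in the unsaturated case the disjointness forces the leftover structure to fit into $|\overline W|$ vertices, capping the corresponding contributions to the sums.
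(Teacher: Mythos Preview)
Your approach has a genuine gap: it never uses hypothesis~\ref{itm:RDHa1}, that $D_0$ has no perfect decomposition. But the bound $|\mathcal{P}|<4n$ is \emph{not} a consequence of \ref{itm:RDHa2}--\ref{itm:RDHa5} together with the maximality of $\mathcal{P}$; it genuinely requires~\ref{itm:RDHa1}. To see this, take $W^-=\emptyset$ and $|W^+|=10$, with each $w^+$ having nearly $n$ out-edges into $\overline{W}$ in $D_0$ (this is consistent with \ref{itm:RDHa2}--\ref{itm:RDHa5}, since \ref{itm:RDHa3} only kills the \emph{in}-edges at $w^+$). If at least half of the vertices $v\in\overline{W}$ satisfy $\disc^-_{D_0}(v)\ge 10$, then using every edge from $W^+$ into this half as a path $w^+v$ gives a maximal partial decomposition with $|\mathcal{P}|\approx 5n>4n$. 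Your disjointness argument between $N^+_{D_1}(w^+)$ and $N^-_{D_1}(w^-)$ is vacuous here (there are no $w^-$), and your ``almost-fully-saturated'' case rests on the false premise that most edges incident to $W$ were removed in forming $D_0$: only the wrong-direction edges are removed, so each $w^+$ can retain close to $n$ out-edges.

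The paper's argument is entirely different and is a proof by contradiction that uses \ref{itm:RDHa1} directly. Assuming $|\mathcal{P}|\ge 4n$, one extracts (via Proposition~\ref{pr:LargeEulerSubgraph}) a large Eulerian subgraph $D_3\subseteq D_2$ with $D_2-D_3$ acyclic, so that $R:=D_1-D_3$ is acyclic and has a perfect decomposition. Then Lemma~\ref{lma:EulerDecomp} decomposes $D_3$ into at most $n$ cycles, each with two representatives in $\overline{W}$ chosen so that no vertex $v$ is a representative more than $\phi(v)$ times, where $\phi(v)$ is the number of paths of $\mathcal{P}$ through $v$; the hypothesis $|\mathcal{P}|\ge 4n$ is exactly what guarantees $\sum_v\phi(v)\ge 4n$, which Lemma~\ref{lma:EulerDecomp} needs. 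One then absorbs each cycle into two paths of $\mathcal{P}$ meeting it at its representatives, obtaining a perfect decomposition of $\cup\mathcal{P}\cup D_3 = D_0-R$, and hence of $D_0$, contradicting~\ref{itm:RDHa1}. So the claim is not a counting fact about $\mathcal{P}$; it is the statement that a large $\mathcal{P}$ would itself furnish a perfect decomposition.
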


\begin{proof}[Proof of claim]
Suppose the contrary that $|\mathcal{P}| \geq 4n$.
By Proposition~\ref{pr:LargeEulerSubgraph}, we can find a Eulerian subgraph~$D_3$ of $D_2$ such that $\delta^0(D_3)\geq \frac{1}{2}(1 - \eps_3) n$ and $D_2 - D_3$ is acyclic. 
Let $R : = D_1 - D_3$. 
By~\ref{itm:RDHa3}, any cycle in $R$ lies in $R [W] = D_2 - D_3$.
Hence $R$ is acyclic.
By Proposition~\ref{pr:acyclic}, $R$ has a perfect decomposition~$\mathcal{P}_1$, which is a partial decomposition of $D_0$ by Proposition~\ref{pr:PartDecomp1}(d) and~(b).

We now show that $D_0 - R =  \cup \mathcal{P} \cup D_3$ has a perfect decomposition $\mathcal{P}'$, which will contradict~\ref{itm:RDHa1} (since then $\mathcal{P}_1 \cup \mathcal{P}'$ is partial decomposition of $D_0$ by Proposition~\ref{pr:PartDecomp1}(b)).
Note that each path in $\mathcal{P}$ has a unique vertex in~$\overline{W}$.
For each $v \in \overline{W}$, write $\phi(v)$ for the number of paths in~$\mathcal{P}$ that contain $v$.
Then $\sum_{v \in \overline{W}}\phi(v) = |\mathcal{P}| \geq 4n$.

By Lemma~\ref{lma:EulerDecomp} (with $\eps_3$ playing the role of $\eps$), we can decompose $D_3$ into $t \leq n$ cycles $C_1', \ldots, C_t'$ such that each cycle is assigned two distinct representative vertices $x_i, y_i \in C_i$ such that each vertex $v$ occurs as a representative at most $\phi(v)$ times.
In particular, we can assign two distinct paths $P_i, Q_i \in \mathcal{P}$ to~$C_i$ such that $V(P_i) \cap V(C_i) = x_i$ and $V(Q_i) \cap V(C_i) = y_i$ and $P_1, \ldots, P_t, Q_1, \ldots, Q_t$ are distinct paths of $\mathcal{P}$. 
Now construct $\mathcal{P}'$ from $\mathcal{P}$ by replacing for each $i=1, \ldots, t$ the paths $P_i$ and $Q_i$ by the paths $P_ix_iC_iy_iQ_i$ and $Q_iy_iC_ix_iP_i$.
Now we see $|\mathcal{P}'| = |\mathcal{P}|$ and that the paths in $\mathcal{P}'$ have the same start and endpoints as those in $\mathcal{P}$ so that $\mathcal{P}'$ is a partial decomposition of~$D_0$ by Proposition~\ref{pr:PartDecomp1}(c).
Finally, by construction
\begin{align*}
\cup \mathcal{P}' = 
\cup \mathcal{P} \cup C_1' \cup \cdots \cup C_t' =  
\cup \mathcal{P} \cup D_3 = D_0 - R,
\end{align*}
as required.
\end{proof}

It turns out that if $\disc_{D_2}(v) \ne 0$ for all $v \in \overline{W}$, then one can relatively easily prove the theorem by taking $D = D_2$. However, in order to fulfil condition (iv), we must deal with vertices for which $\disc_{D_2}(v) = 0$: this is not hard but is technically cumbersome. We will modify $\mathcal{P}$ by extending some of its paths. 
Let 
\begin{align*}
U^{\pm} & : =  \{v \in \overline{W} \colon \disc_{D_2}^{\pm}(v)>0\},&
U^0 &: = \overline{W} \setminus (U^+ \cup U^-), \\
U^0_{\pm} & := U^0 \cap \{v \in \overline{W} \colon \disc_{D_0}^{\mp}(v)>0\}.
\end{align*}
Note that $U^0_+$ and $U^0_-$ partition $U^0$ (since $\disc_{D_0}(u) \not= 0$ by~\ref{itm:RDHa2}).
For each $u \in U^0_+$ (and $u \in U^0_-$), let $P_u \in \mathcal{P}$ be a path ending (and starting) at $u$ (such a path exists since $\disc_{D_0}(u) \not= 0$ by~\ref{itm:RDHa2}).
Let $\mathcal{P}'_{\pm} := \{P_u: u \in U^0_{\pm}\} \subseteq \mathcal{P}$ and let $\mathcal{P}' := \mathcal{P}'_- \cup \mathcal{P}'_+$.
\COMMENT{To be precise $\mathcal{P}'_+ = \{ u w^-_u  \colon u \in U^0_+\}$ and $\mathcal{P}'_-= \{ w^+_u u  \colon u \in U^0_-\}$ for some $w^\pm_u \in W^\pm$.}
Our aim is to extend each path in $\mathcal{P}'$ so that its starting and ending points avoid~$U^0$.

We show for later that $\disc(D_2)$ is large.
By the maximality of~$\mathcal{P}$, we have
\begin{align}
N^{\pm}_{D_1}(W^\pm) \subseteq U^{\pm} \cup U^0_{\pm}. \label{eq:neigh}
\end{align}
\COMMENT{
If there exists $w^+ \in W^+$ and $u \in N^{+}_{D_1}(w) \cup U^0_-$, then $ wP_u$ is a path of the form $w^+uw^-$ and so $\mathcal{P} \cup wP_u \setminus P_u$ contradicts the maximality of $\mathcal{P}$. 
If there exists $w^+ \in W^+$ and $u \in N^{+}_{D_1}(w) \cup U^-$, then $\mathcal{P} \cup w u $ contradicts the maximality of $\mathcal{P}$.}
Together with Proposition~\ref{pr:PartDecomp2}(c), we have
\begin{align}
\disc(D_2) 
& = \disc(D_1[\overline{W}]) 
= \disc(D_1) 
= \disc(D_0) - |\mathcal{P}_0| \:\:\:\:
 \overset{\mathclap{\text{\ref{itm:RDHa5}, Claim~\ref{clm:RDH}}}}{>} \:\:\:\:
 (Cn/4) -4n > 4n. 
\label{eq:discD2}
\end{align}

Our aim is to extend each path in $\mathcal{P}'$ so that its starting and ending points avoid~$U^0$. 
In fact, we replace $\mathcal{P}'$ by $\mathcal{Q}'$ using the following claim.

\begin{claim} \label{clm:Q}
There exists a partial decomposition $\mathcal{Q}'$ of $\cup \mathcal{P}' \cup D_1 = D_0 - \cup (\mathcal{P} \setminus \mathcal{P}')$ such that
\begin{enumerate}[label={\rm(c$_{\arabic*}$)}]
	\item $\disc(\cup  \mathcal{Q}'- W ) \le |U^0| \le n $;  \label{itm:RDHd1}
	\item $\cup \mathcal{P}' \subseteq \cup \mathcal{Q}'$; \label{itm:RDHd2}
	\item $\Delta(\cup \mathcal{Q}'- W) \le 2 \eps_3 n $; \label{itm:RDHd3}
	\item \label{itm:RDHd4}
	$1 \le \disc_{D_2 - \cup \mathcal{Q}'}^{\pm}(u) \le 2 \alpha n$ if  $w \in U^0_{\mp}
	\cup U^{\pm}$.
\end{enumerate}
\end{claim}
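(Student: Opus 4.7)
The plan is to construct $\mathcal{Q}'$ by extending each $P_u \in \mathcal{P}'$ along a short path in $D_2$ whose other endpoint lies in $U^+ \cup U^-$, hence outside~$U^0$. For $u \in U^0_+$, where $P_u$ ends at $u$, we append a path $R_u$ from $u$ to some target $v_u \in U^-$; for $u \in U^0_-$, where $P_u$ starts at $u$, we prepend a path from some target $v_u \in U^+$ to $u$. The removal of the edge of $R_u$ incident to $u$ immediately forces $\disc^\mp_{D_2 - \cup \mathcal{Q}'}(u) = 1$, settling the $U^0_\mp$ case of (c_4); the choice of $v_u$ in the ``opposite sign class'' $U^\mp$ (for $u \in U^0_\pm$) is what will preserve both the sign and the $2\alpha n$ bound at each $v_u$ itself.

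The construction proceeds in two steps. First, select the targets so that each $v \in U^-$ is chosen by at most $|\disc_{D_2}(v)| - 1$ vertices of $U^0_+$ and each $v \in U^+$ is chosen by at most $\disc_{D_2}(v) - 1$ vertices of $U^0_-$. This is a degree-constrained bipartite distribution problem, feasible because the total capacity on each side equals $\disc(D_2) - |U^\pm| \geq (C/4 - 5)n \geq n \geq |U^0_\pm|$ by~\eqref{eq:discD2} and $C > 32$; moreover, since $|U^\pm| \geq \disc(D_2)/(2\alpha n)$ is not too small (by~\ref{itm:RDHb2}), the targets can be spread so that the resulting ``wish'' multidigraph $H$ on $V(D_2)$, with edge set $\{u \to v_u : u \in U^0_+\} \cup \{v_u \to u : u \in U^0_-\}$, satisfies $\Delta(H) \leq 2\alpha n$. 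Second, apply Corollary~\ref{cor:manyshortpaths3} to $D_2$, which is a robust expander by~\ref{itm:RDHb3} (since $\delta^0(D_2) \ge 3n/7$), to realize the edges of $H$ as edge-disjoint paths $R_u$ in $D_2$ with $\Delta\bigl(\bigcup_u R_u\bigr) \leq 4\sqrt{2\alpha}\,n \leq 2\eps_3 n$; these are the extensions.

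Verification of (c_1)--(c_4) is then routine. (c_1) holds because $\cup\mathcal{Q}' - W = \bigcup_u R_u$ is an edge-disjoint union of $|U^0|$ paths and so has excess at most $|U^0|$. (c_2) holds by construction and (c_3) follows from the $\Delta$-bound of Corollary~\ref{cor:manyshortpaths3}. For (c_4), at each $v \in U^+$ the only extensions incident to $v$ are those going out of $v$, so $\disc_{D_2 - \cup\mathcal{Q}'}(v) = \disc_{D_2}(v) - c(v) \in [1,\, \disc_{D_2}(v)] \subseteq [1, 2\alpha n]$, where $c(v) \leq \disc_{D_2}(v) - 1$ denotes the number of targets landing at $v$; a symmetric analysis handles $U^-$. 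Finally, $\mathcal{Q}'$ is a partial decomposition of $D_0 - \cup(\mathcal{P} \setminus \mathcal{P}')$ by Proposition~\ref{pr:PartDecomp1}(a)(b): $\mathcal{P}'$ is already one by part~(a), and the extension paths (lying in $D_1$) form a partial decomposition of $D_1$ thanks to the excess capacity reserved at each $v_u$, so combining via part~(b) yields the claim.

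The main obstacle is step one, where the target selection must simultaneously respect per-vertex capacities and keep $\Delta(H)$ bounded; this is a routine bipartite feasibility argument but relies crucially on the large slack $\disc(D_2) \gg |U^0|$ afforded by the assumption $C > 32$.
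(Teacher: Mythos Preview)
Your core construction---choose targets $v_u \in U^{\mp}$ for $u \in U^0_{\pm}$, realise the abstract edges via Corollary~\ref{cor:manyshortpaths3} inside $D_2$, and concatenate with $P_u$---is exactly the paper's approach. The gap is in the last paragraph, where you argue that $\mathcal{Q}'$ is a partial decomposition of $\cup\mathcal{P}' \cup D_1$.

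Your appeal to Proposition~\ref{pr:PartDecomp1}(b) does not go through: the family $\{R_u\}$ is \emph{not} a partial decomposition of $D_1$ (nor of $D_2$). For $u \in U^0_+$ the path $R_u$ starts at $u$, yet $\disc^+_{D_1}(u)=0$; indeed from \eqref{eq:neigh} one has $d^+_{D_1}(u,W^-)=0$ and hence $\disc_{D_1}(u)=\disc_{D_2}(u)-d^-_{D_1}(u,W^+)\le 0$. So you cannot decouple $\mathcal{Q}'$ into $\mathcal{P}'$ and $\{R_u\}$ and combine via part~(b).

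More seriously, even the concatenated paths can fail at the targets. For a target $v \in U^-$ you cap the number of paths ending there by $|\disc_{D_2}(v)|-1$, but what you actually need is a bound of $\disc^-_{D_1}(v)$ (since $\mathcal{P}'$ does not touch $v$, the excess in $\cup\mathcal{P}'\cup D_1$ at $v$ equals that in $D_1$). Using \eqref{eq:neigh} again,
\[
\disc_{D_1}(v)=\disc_{D_2}(v)+d^+_{D_1}(v,W^-),
\]
so whenever $d^+_{D_1}(v,W^-)\ge 2$ your allowance $|\disc_{D_2}(v)|-1$ can exceed the true capacity $\disc^-_{D_1}(v)$, and the partial decomposition condition is violated.

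The paper handles this by not stopping at $v_u$. It first observes that $\mathcal{Q}_+:=\{P_uR_u:u\in U^0_+\}$ is only a partial decomposition of $D_1^+-W^-$, where $D_1^+:=\cup\mathcal{P}'_+\cup D_1$, and then invokes Proposition~\ref{pr:PartDecomp2}(a) with $(A^+,A^-,R)=(\emptyset,W^-,V(D)\setminus W^-)$ to extend each path by an edge into $W^-$ whenever one is available. This extension absorbs exactly the surplus edges from $v$ to $W^-$ that caused the discrepancy above, yielding a genuine partial decomposition $\mathcal{Q}'_+$ of $D_1^+$; a symmetric extension into $W^+$ produces $\mathcal{Q}'_-$. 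Your write-up is missing this step.
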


\begin{proof}[Proof of claim]
We will show how to extend the paths in $\mathcal{P}'_{\pm}$ to obtain sets of paths $\mathcal{Q}_{\pm}$ and we will take $\mathcal{Q} = \mathcal{Q}_+ \cup \mathcal{Q}_-$. We show how to construct $\mathcal{Q}_+$; the construction of $\mathcal{Q}_-$ follows similarly.

For each $u \in U^0_+$, pick a vertex $b_u \in U^-$ such that no $v \in U^-$ is chosen more than $\disc_{D_2}(v) - 1$ times (which is possible as $|U^0_+| \le n \le \disc(D_2) - n$ by \eqref{eq:discD2}) and let $e_u = u b_u$. 
Define a digraph~$H$ on~$V(D)$ with edge set $\{e_u \colon u \in U^0_+\}$.
Note that $\Delta(H) \leq 2 \alpha  n $ by~\ref{itm:RDHb2}.
We apply Corollary~\ref{cor:manyshortpaths3} with $D_2, H, 2 \alpha$ playing the roles of $D, H, \gamma$ to obtain a set of edge-disjoint paths $\mathcal{P}''_+ : = \{P_u' \colon u \in U^0_+\}$ in $D_2$ such that each $P_u'$ starts at~$u$ and ends at~$b_u$ and $\Delta(\cup \mathcal{P}') \le \eps_3 n $.
Recalling that for $u \in U_0^+$, the path $P_u$ is a single edge starting at $W^+$ and ending at $u$, we see that the path $P_u P_u'$ starts at $W^+$ and ends at~$b_u$. 
Let $D_1^+ := \cup \mathcal{P}'_+ \cup D_1 $.
By our choices of $\mathcal{P}'_+$, $b_u$ and~\eqref{eq:neigh}, $\mathcal{Q}_+:=\{ P_u P_u' \colon u \in U^0_+\}$ is a partial decomposition of $D_1^+ - W^-$.
Moreover,  we have 
\begin{align*}
	\disc_{D_2 - \cup \mathcal{Q}_+}(w) 
	\begin{cases}
	\in [\min\{ \disc_{D_2}(w),-1\}, -1]  \overset{\text{\ref{itm:RDHb2}}}{\subseteq} [-2 \alpha n , -1] & \text{if  $w \in U^0_+ \cup U^-$;}\\
	= \disc_{D_2}(w)  & \text{if  $w \in U^0_- \cup U^+$},
	\end{cases}
\end{align*}
where the first case follows since  $\disc_{\cup \mathcal{Q}_+}(u) =\disc_{\cup \mathcal{P}_+'}(u)=1$ for all $u \in U_+^0$, and by our choice of $b_u \in U^-$.
By~\ref{itm:RDHa3} and Proposition~\ref{pr:PartDecomp2}(a) (with $(D_1^+ , \emptyset, W^-,V(D) \setminus W^-)$ playing the role of $(D, A^+,A^-,R)$), we can extend $\mathcal{Q}_+$ to a partial decomposition $\mathcal{Q}'_+ = \{Q_u' \colon u \in U^0_+ \}$ of~$D_1^+$ such that for all $u \in U^0_+$ we have
\begin{enumerate}[label={\rm(d$_{\arabic*}$)}]
	\item $Q_u'-W^- = P_u P_u'$; 
	\item $Q_u'$ is a path from $W^+$ to $U^- \cup W^-$;
	\item $Q_u' - Q_u'[V(D) \setminus W^+] = P_u$;
	\item $\Delta(\cup \mathcal{Q}'_+ - W) \le \eps_3 n $;
	\item for all $w \in \overline{W}$, 
	\begin{align*}
	\disc_{D_2 - \cup \mathcal{Q}'_+}(w) 
	\begin{cases}
	\in [- 2 \alpha n,-1] & \text{if  $w \in U^0_+ \cup U^-$;}\\
	= \disc_{D_2}(w)  & \text{if  $w \in U^0_- \cup U^+$}.
	\end{cases}
\end{align*}.
\end{enumerate}
By a similar argument, we can find a corresponding partial decomposition $\mathcal{Q}'_- = \{Q_u' \colon u \in U^0_- \}$ of~$\cup \mathcal{P}'_- \cup D_1$ edge disjoint from $\cup \mathcal{Q}'_+ $.
By setting $\mathcal{Q}':= \mathcal{Q}'_+ \cup \mathcal{Q}'_-$, our claim follows. Note that \ref{itm:RDHd2}, \ref{itm:RDHd3}, and \ref{itm:RDHd4} follow from (d3), (d4), (d5) respectively, while \ref{itm:RDHd1} follows from (d1) and the fact that $|\mathcal{Q}'_{\pm}| = |U^0_{\pm}|$.
\COMMENT{VP: we didn't say why $\mathcal{Q'}$ is a partial decomp}
\end{proof}

Let 
\begin{align*}
	D_3 & := \cup \mathcal{P}' \cup D_1 - \cup \mathcal{Q}' = D_0 - \cup (\mathcal{P} \setminus \mathcal{P}') - \cup \mathcal{Q}',\\
	D & : = D_3-W = D_2 - \cup \mathcal{Q}'.
\end{align*}
We show that $D$ satisfies the conclusion of the theorem.
In order to prove (i), if $D$ has a perfect decomposition, then so does $D_3$ by \ref{itm:RDHa3} and Proposition~\ref{pr:PartDecomp2}(b), and hence so does $D_0$ since $(\mathcal{P} \setminus \mathcal{P}') \cup \mathcal{Q}'$ is partial decomposition of $D_0$. This contradicts \ref{itm:RDHa1}, so $D$ has no perfect decomposition and so (i) holds.
Our choice of $W$ implies \ref{itm:RemHighDisc2}.
Note that \ref{itm:RemHighDisc3} follows from \ref{itm:RDHb1} and \ref{itm:RDHd3}, and \ref{itm:RemHighDisc4} follows from~\ref{itm:RDHd4}.
Finally to see~\ref{itm:RemHighDisc5},
\begin{align*}
		\disc(D) \geq \disc(D_2) - \disc ( \cup \mathcal{Q}' -W)
		\overset{\mathclap{\text{\eqref{eq:discD2},\ref{itm:RDHd1} }}}{\ge}
		Cn/4 - 5n
\end{align*}
as required.
%
\end{proof}


\subsection{Balancing the number of positive and negative excess vertices}
Given the oriented graph $D$ produced by Theorem~\ref{thm:RemHighDisc}, the following theorem produces a digraph $D'$ that has the same properties as $D$ (with slightly weaker parameters) but with the additional property that the number of vertices of positive excess is almost the same as the number of vertices with negative excess.
Recall that for a digraph $D$,  $U^+(D)$ (resp.\ $U^-(D)$) denotes the set of vertices of $D$ with positive (resp.\ negative) excess.

\begin{theorem}
\label{thm:DiscBalance}
Let $1/n \ll 1/C \ll \alpha, \beta \ll \eps \ll \lambda, \eps' \ll 1$ with $n$ even. 
Suppose that $D$ is an $n$-vertex oriented graph, where $\disc(D) \geq Cn$,  $\delta(D) \geq (1 - \eps) n$, and $1 \leq |\disc_{D}(v)| \leq \alpha n$ for all $v\in V(D)$.
Suppose that $D$ does not have a perfect decomposition. 
Then there exists a subgraph $D'$ of~$D$ with the following properties:
\begin{enumerate}[label={\rm(\roman*)}]
\item $D'$ does not have a perfect decomposition; \label{itm:DB1}
\item $|D'| \geq (1 - \beta)n$ with $|D'|$ even;  \label{itm:DB2}
\item $\delta(D') \geq (1 - \eps'/2)n \geq (1 - \eps') |D'|$; \label{itm:DB3}
\item $1 \leq |\disc_{D'}(v)| \leq \alpha n \leq 2\alpha |D'|$ for all $v\in V(D')$; \label{itm:DB4}
\item $\disc(D') \geq \lambda C n /32 \geq \lambda C|D'| / 32$; \label{itm:DB5}
\item $\left| |U^-(D')| - |U^+(D')| \right| \leq 2\lambda |D'|$.\label{itm:DB6}
\end{enumerate}
\end{theorem}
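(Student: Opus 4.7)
Following Step 2 of the overview in Section~\ref{se:overview}, we may assume without loss of generality that $|U^-(D)| \geq |U^+(D)|$, and set $\Delta := |U^-(D)| - |U^+(D)|$. If $\Delta \leq 2\lambda n$, we take $D' := D$ and the conclusions (i)--(vi) follow directly from the hypotheses. Otherwise we aim to ``flip'' the sign of excess for roughly $k = \lceil (\Delta - \lambda n)/2 \rceil$ vertices of $U^-(D)$. Pick a set $X \subseteq U^+(D)$ with $|X| \ll \lambda n$ whose members are chosen to have large $\disc^+$ (using $\disc(D) \geq Cn$ and the hierarchy $1/C \ll \lambda$ one shows $X$ can be found with $\sum_{x \in X} \disc^+_D(x) \geq k$), together with a set $Z \subseteq U^-(D)$ of size $k$ and a surjection $z \mapsto x(z)$ from $Z$ to $X$ satisfying $x(z)\, z \in E(D)$ for every $z \in Z$; we arrange additionally that $|N^-_D(z) \cap X| - |N^+_D(z) \cap X| = 1$ for every $z \in Z$, so $x(z)$ accounts for the only ``excess'' $X$-neighbour of $z$. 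The existence of such a pairing follows from the near-completeness $\delta(D) \geq (1 - \eps)n$ and a Hall-type argument (Theorem~\ref{thm:Hall}).

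The required partial decomposition $\mathcal{P}_1 = \mathcal{P}_W \cup \mathcal{P}_Z$ is built in two stages. First, apply Lemma~\ref{lma:W} with $W := X$ to obtain a partial decomposition $\mathcal{P}_W$ of $D$ whose union $H_W$ contains every in-edge at $X$ and every edge inside $X$, has $d_{H_W}(v) = O((\beta + \gamma)n)$ for every $v \notin X$, and satisfies $\disc(D - H_W) \geq Cn/4$. Second, construct an edge-disjoint partial decomposition $\mathcal{P}_Z$ of $D - H_W$ with the following properties: (a) for each $z \in Z$ there is a designated path $Q^*_z \in \mathcal{P}_Z$ that starts at $x(z)$, uses $x(z)\, z$ as its first edge, and continues past $z$ to end at some vertex of $U^-(D) \setminus Z$; (b) $\mathcal{P}_Z$ contains exactly $\disc^-_D(z)$ paths ending at each $z \in Z$; and (c) the paths of $\mathcal{P}_Z$ starting at each $x \in X$ together with $\mathcal{P}_W$ cover every edge incident to $X$ in $D$. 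The routing is carried out by iterating Corollary~\ref{cor:matchpath} and Lemma~\ref{lma:manyshortpaths3} inside $(D - H_W) - X$, whose robust expansion is guaranteed by Lemma~\ref{lma:outexpander}.

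Set $D' := D - \cup(\mathcal{P}_1 \setminus \{Q^*_z : z \in Z\}) - X$. A direct calculation confirms $\disc_{D'}(z) = 1$ for every $z \in Z$: the $\disc^-_D(z)$ paths of $\mathcal{P}_1 \setminus \{Q^*_z\}$ ending at $z$ cancel $\disc_D(z)$ exactly, the pass-through of $Q^*_z$ through $z$ does not alter $\disc(z)$, and the removal of $X$ contributes exactly $|N^-_D(z) \cap X| - |N^+_D(z) \cap X| = 1$. By our careful choice of $X$, the excess sign at every other $v \in V(D') \setminus Z$ is preserved, so $|U^+(D')| = |U^+(D)| - |X| + |Z|$ and $|U^-(D')| = |U^-(D)| - |Z|$, yielding (vi). Properties (ii)--(v) follow from the bounds on $|X|$, $d_{H_W}$, and $|\mathcal{P}_Z|$ together with Lemma~\ref{lma:W}(iv). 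For (i), if $D'$ admitted a perfect decomposition $\mathcal{P}'$, then for each $z \in Z$ we could prepend the edge $x(z)\, z$ to the unique path of $\mathcal{P}'$ starting at $z$ and take the union with $\mathcal{P}_1 \setminus \{Q^*_z : z \in Z\}$, producing a perfect decomposition of $D$ and contradicting the hypothesis. The main technical obstacle is the joint construction of $\mathcal{P}_Z$: simultaneously meeting the start-counts at every $x \in X$, the end-counts at every $z \in Z$, threading each $Q^*_z$ through its designated $z$ without ending at $z$, preserving edge-disjointness across all paths, and maintaining enough robust expansion in the residual auxiliary graph to allow the next routing step---the bookkeeping is intricate but routine given the tools of Sections~\ref{se:W} and~\ref{se:fprelims}.
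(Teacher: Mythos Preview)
Your overall strategy matches the paper's---choose $X\subseteq U^+$, $Z\subseteq U^-$, a surjection $z\mapsto x(z)$ with $x(z)z\in E(D)$, use Lemma~\ref{lma:W} to clean up the edges at $X$, and recover a perfect decomposition of $D$ from one of $D'$ by prepending the edges $x(z)z$---but several of your steps contain genuine gaps.

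First, the condition $|N^-_D(z)\cap X|-|N^+_D(z)\cap X|=1$ for every $z\in Z$ is both unnecessary and essentially impossible to arrange: with $|Z|$ of order $n$ and $|X|$ tiny, you cannot force this exact difference across all of $Z$ by any Hall-type argument. More importantly, it is not the right accounting. What actually makes $\disc_{D'}(z)=1$ is that in $D-\cup(\mathcal{P}_1\setminus\{Q^*_z\})$ the \emph{only} edge between $z$ and $X$ that survives is $x(z)z$; hence deleting $X$ removes exactly one in-edge at $z$. Your own construction already implies this (since $\mathcal{P}_1$ covers every $X$-incident edge and $Q^*_z$ touches $X$ only in its first edge), so the extra neighbourhood constraint should simply be dropped.

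Second, your order of operations is unsafe. You apply Lemma~\ref{lma:W} \emph{before} committing to the edges $x(z)z$, but Lemma~\ref{lma:W} gives no control over which out-edges at $x$ are consumed by $H_W$; the edge $x(z)z$ may already lie in $H_W$, making $Q^*_z$ unconstructible. The paper avoids this by constructing $\mathcal{P}_Z$ first (thereby reserving the edges $x_zz$) and only afterwards applying Lemma~\ref{lma:W} to isolate $X$ in the remainder.

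Third, you assert that ``the excess sign at every other $v\in V(D')\setminus Z$ is preserved'', but nothing in your construction prevents $\disc_{D'}(v)=0$ for some such $v$: removing a partial decomposition can drive the excess of a vertex to zero, which would violate~\ref{itm:DB4}. The paper handles this with a dedicated step (its Claim~\ref{clm:DB3}) that lays down a small partial decomposition guaranteeing $\disc_{\cup\mathcal{P}_1}(v)\ne 0$ for every $v\notin X\cup Z$; you need an analogous device.
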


\begin{proof}
We introduce a parameter $\eps_1$ satisfying $\eps \ll \eps_1  \ll \eps' \ll 1$.
Let us write $U^{\pm} := U^{\pm}(D)$.  
If $\left| |U^-| - |U^+| \right| \leq \lambda n$ then we can take $D' = D$ and we are done, so assume without loss of generality that $|U^-| > |U^+| + \lambda n$.
We make the following claim.

\begin{claim}
There exists sets $X \subseteq U^+$ and $Z \subseteq U^-$ satisfying the following: 
\begin{enumerate}[label={\rm(a$_{\arabic*}$)}]
\item $|X| \leq \beta n$ and $|X|$ is even; \label{itm:DBa1}
\item $Z$ can be partitioned into sets $Z_x \colon x\in X$ with $|Z_x| \leq \disc_D(x)$ and $Z_x \subseteq N_D^+(x)$; \label{itm:DBa2}
\item $n < \sum_{x \in X}\disc_D(x) \leq (1+ 2\alpha) n \leq (1 - \lambda / 4)\disc(D)$ and $\sum_{z \in Z}\disc_D^-(z) \leq (1 - \lambda/4) \disc(D)$; \label{itm:DBa3}
\item $|Z| = \frac{1}{2} \left| |U^-| - |U^+| \right| \pm \frac{1}{2} \lambda n$ or equivalently $\left| |U^- \setminus Z| - |U^+ \cup Z| \right| \leq \lambda n$. \label{itm:DBa4}
\end{enumerate}
\end{claim}

\begin{proof}[Proof of claim]
Assume $\beta n$ is an even integer and let $X'$ be the set of $\beta n$ vertices of $U^+$ of highest excess.
Then
\begin{align*}
\sum_{x \in X'}\disc_D(x) \geq  \beta n \disc(D) / |U^+| > C\beta n^2 / (n/2) = 2C \beta n >n. 
\end{align*}
Now we remove suitable vertices from $X'$ to obtain a set $X$ such that 
\begin{align*}
n < \sum_{x \in X}\disc_D(x) \le (1+ 2\alpha) n \:\:\:\text{ and }\:\:\: |X| \text{ even.}
\end{align*}
This is possible by \ref{itm:DB4}.
For each $x \in X$, we have 
\begin{align*}
	|N_D^+(x) \cap U^-| 
	& \geq \frac{1}{2}d^+_D(x) - |U^+| 
	\geq \frac{1}{2}(1 - \eps)n - |U^+| \\
	&= \frac{1}{2}(|U^-|  - |U^+|) - \frac{1}{2}\eps n 
\geq \frac{1}{2}(|U^-|  - |U^+|) - \frac{1}{4}\lambda n.
\end{align*}
Thus for each $x \in X$, we can greedily pick disjoint $Z_x' \subseteq N_D^+(x) \cap U^-$ with $|Z_x'| \leq \disc_D(x)$ and $|\cup_{x \in X} Z_x'| = \frac{1}{2}(|U^-| - |U^+|) - \frac{1}{4}\lambda n$.
Let $Y$ be the $\frac{1}{4} \lambda n$ vertices of lowest excess (i.e.\ of highest negative excess) in~$Z' := \cup_{x \in X} Z_x'$.
Set $Z_x := Z_x' \setminus Y$ and $Z := Z' \setminus Y$. 
Hence $Z := \cup_{x \in X} Z_x$ and $|Z| = \frac{1}{2}||U^+| - |U^-|| - \frac{1}{2}\lambda n$.
Also
\begin{align*}
\sum_{z \in Z} \disc^-(z) 
& = \sum_{z' \in Z'} \disc^-(z')  - \sum_{y \in Y} \disc^-(y)\\
&\le (1- \lambda/4)\sum_{z' \in Z'} \disc^-(z')
\leq \left( 1 - \lambda/4 \right)\disc(D).
\end{align*}
\end{proof}

We will construct the final graph $D'$ such that $V(D') = V(D) \setminus X$, where $U^+(D')= (U^+ \setminus X) \cup Z$ and $U^-(D') = U^- \setminus Z$, and where $\disc_{D'}(z) = 1$ for all $z \in Z$.

For each $z \in Z$, we write $x_z$ for the vertex $x$ such that $z \in Z_x$. 
Note that $x_z z \in E(D)$. 

\begin{claim}  \label{clm:DB2}
There exists a partial decomposition $\mathcal{P}_Z := \{ x_z Q_z : z \in Z\}$ of~$D$ such that each $Q_z$ is a non-empty path in $D - X$ starting at $z$ and ending in $U^- \setminus  Z$. 
Moreover, $\ex_{D - \cup \mathcal{P}_Z} (v) \ne 0$ for all $ v \in V(D) \setminus X$ and $\Delta(\cup \mathcal{P}_Z) \le \eps_1 n$.
\end{claim}

\begin{proof}[Proof of claim]
For each $z \in Z$, pick a vertex $b_z \in U^- \setminus Z$ such that 
no $v \in U^-$ is chosen more than $\disc_D(v) - 1$ times (which is possible as $|Z| \le n \le \lambda \disc(D) / 4 \le \disc(D) - \sum_{z \in Z} \disc_D^- (z)$ by \ref{itm:DBa3}) and let $e_z = z b_z$.
Define a digraph~$H$ on~$V(D) \setminus X$ with edge set $\{e_z \colon z \in Z\}$.
Note that $\Delta(H) \leq \alpha  n  \le 2 \alpha |D-X|$.
We apply Corollary~\ref{cor:manyshortpaths3} with $D - X, H, 2 \alpha$ playing the roles of $D, H, \gamma$ and obtain a set of edge-disjoint paths $\mathcal{Q} : = \{Q_z \colon z \in Z\}$ such that each $Q_z$ starts at~$z$ and ends at~$b_z$ and $\Delta(\cup \mathcal{Q}) \le \eps_1 n/2 $.
Our claim follows by our choice of $\mathcal{Q}$. 
\end{proof}

Let $D_1 := D - \cup \mathcal{P}_Z$ and write $\mathcal{Q}_Z : = \{Q_z \colon z \in Z\}$.

\begin{claim} \label{clm:DB3}
There exists a partial decomposition $\mathcal{P}_1$ of~$D_1$ such that $\cup \mathcal{P}_1 \subseteq D_1 -X$, $\disc(\cup \mathcal{P}_1) \le n$, $\Delta(\cup \mathcal{P}_1) \le \eps_1 n$ and $\disc_{\cup \mathcal{P}_1} (v) \ne 0$ if $v \notin X \cup Z$ and $\disc_{\cup \mathcal{P}_1} (v) = 0$ otherwise.
\end{claim}

\begin{proof}[Proof of claim]
Let $H$ be any digraph on~$V(D) \setminus (X \cup Z)$ with edges from $U^+$ to $U^-$ such that $1 \le d_H(v) \le |\disc_{D_1}(v)|$ for all $ v \in V(D) \setminus (X \cup Z)$. 
Note that $\Delta(H) \leq \alpha  n \le 2 \alpha |D-X|$.
By deleting edges of $H$ if necessary, we may assume that $H$ has at most $n$ edges. 
We apply Corollary~\ref{cor:manyshortpaths3} with $D_1 - X, H, 2 \alpha$ playing the roles of $D, H, \gamma$ and obtain the desired partial decomposition~$\mathcal{P}_1$. 
\end{proof}

Let $D_2 := D_1 - \cup \mathcal{P}_1$.
Note that $\delta(D_2) \ge (1- 3 \eps_1) n $. 

\begin{claim} \label{clm:DB4}
There exists a partial decomposition $\mathcal{P}_2$ of~$D_2$ such that,
writing $D_3 := D_2 - \cup \mathcal{P}_2$,
we have $d_{D_3}(x) = 0$ for all $x \in X$, $d_{D_3}(v) \ge (1- \eps'/2)n$ for all $v \notin X$, $\disc_{D_3}(z) = 0$ for all $z \in Z$,  and $\disc(D_3) \ge \lambda C n /32$
\end{claim}

\begin{proof}[Proof of Claim~\ref{clm:DB4}]
Let 
\begin{align*}
m := \max \left\{  \sum_{x \in X}\disc_D(x) , \sum_{z \in Z}\disc_D^-(z) \right\}. 
\end{align*}
Recall that $\delta(D_2) \ge (1- 3 \eps_1) n$ and 
\begin{align}
	\disc(D_2) \ge \disc(D) - 2n \ge (1- \lambda /8)\disc(D) 
	\overset{\text{\ref{itm:DBa3}}}{\ge}
	\lambda \disc(D)/8+ m  \ge \lambda C n /8 +m. \nonumber
\end{align}
Let $H$ be a digraph on~$V(D)$ with $m$ edges from $U^+$ to $U^-$ such that $d_H(v) = |\disc_{D_2}(v)|$ for all $ v \in X \cup Z$ and $d_H(v) \le |\disc_{D_2}(v)|$ otherwise.
(Such an $H$  exists by the calculation above.)
Note that $\Delta(H) \leq \alpha  n$.
We apply Corollary~\ref{cor:manyshortpaths3} with $D_2, H, \alpha$ playing the roles of $D, H, \gamma$ and obtain a partial decomposition~$\mathcal{P}_2'$ of~$D_2$ such that, writing $D_2' := D_2 - \cup \mathcal{P}_2'$, we have 
\begin{align*}
	\disc_{D_2'}(v) = 0 \text{ for all $v \in X \cup Z$}, \text{ }
	\disc(D_2') \ge \lambda C n /8   \text{ and }
	\Delta(\cup \mathcal{P}_2') \le \eps_1n. 
\end{align*}

We now apply Lemma~\ref{lma:W} with $(D_2', X, \alpha, \beta, \eps_1, 4\eps_1, \lambda C/8)$ playing the roles of $(D,W,\alpha, \beta, \gamma, \eps, C)$.
We obtain a partial decomposition $\mathcal{P}_2''$ of~$D_2'$ such that, writing $D_3 := D_2' - \cup \mathcal{P}_2''$, we have
\begin{align*}
	d_{D_3}(x)  = 0 \text{ for all $x \in X$}, \:\: 
	\disc(D_3) \ge \lambda C n/32 \:\:  \text{ and } \:\:
	\Delta(\cup \mathcal{P}_2'' - X) \le 5\eps_1 n
\end{align*}
The claim holds by setting $\mathcal{P}_2 := \mathcal{P}_2' \cup \mathcal{P}_2''$. 
\end{proof}

Finally, we show how to prove the theorem using Claim~\ref{clm:DB4}. 
Note that $\mathcal{P}_2$ is a partial decomposition of~$D$ by Proposition~\ref{pr:PartDecomp1}(b). 
Let 
\begin{align*}
D'' &: = D - \cup \mathcal{P}_2 = \cup (\mathcal{P}_Z \cup \mathcal{P}_1) \cup D_3,\\
D' & := D''-X = \cup (\mathcal{Q}_Z \cup \mathcal{P}_1) \cup (D_3-X).
\end{align*}
Since vertices of $X$ are isolated in $D_3$,
 we have $E (D'' - D') = \{ x_z z \colon z \in Z\}$. 
Therefore, by Proposition~\ref{pr:PartDecomp2}(b), (with $(D'', X, \emptyset, V(D) \setminus X)$ playing the roles of $(D,A^+,A^-,R)$) we see that if $D'$ has a perfect decomposition then so does $D''$ and hence so does $D$, a contradiction; hence $D'$ does not have a perfect decomposition, proving \ref{itm:DB1}.
Note that \ref{itm:DB2} follows from \ref{itm:DBa1}.
Since $E(D_3-X) = E(D_3)$, \ref{itm:DB3} holds by Claim~\ref{clm:DB4}. 
For all $z \in Z$, we have $\disc_{\mathcal{P}_1} (z) = 0$ and $\disc_{D_3-X} (z) = 0 $, and so $\disc_{D'}(z) = \disc_{\cup \mathcal{Q}_Z}(z) = 1$ by Claim~\ref{clm:DB2}.
Since $\mathcal{P}_2$ is a partial decomposition of~$D$, $\disc_{D''}^{\pm} (u) \le \alpha n $ for all $u \in U^{\pm}$. 
Moreover, for $u \in U^{\pm} \setminus (X \cup Z)$, $\disc_{D'}^{\pm} (u) = \disc_{D''}^{\pm} (u) \ge \disc_{\cup \mathcal{P}_1}^{\pm} (u) \ge 1$. 
Hence \ref{itm:DB4} holds.
Furthermore, we have $U^+(D') = (U^+ \setminus X) \cup Z$ and $U^-(D') = U^- \setminus  Z$.
Thus \ref{itm:DB6} holds by~\ref{itm:DBa4}.
Note that $\mathcal{Q}_Z$ and $\mathcal{P}_1$ are partial decompositions\footnote{To see this note that $\mathcal{P}_{Z}$ and $\mathcal{P}_1$ are partial decompositions of $D''$. We obtain respectively $D'$, $\mathcal{Q}_Z$, $\mathcal{P}_1$ by deleting $X$ from $D''$, $\mathcal{P}_Z$, $\mathcal{P}_1$. Then noting that $\disc_{D''}(z) = \disc_{\cup \mathcal{P}_Z \cup \mathcal{P}_1}(z)=0$ for all $z \in Z$ and that  the only edges incident to $X$ in $D''$ are the initial edges of paths in $\mathcal{P}_Z$, we can conclude $\mathcal{Q}_Z$ and $\mathcal{P}_1$ are partial decomposition of $D'$.} of $D'$, so $\disc(D') \ge \disc(D_3-X) = \disc(D_3) \ge \lambda C n /32$ implying~\ref{itm:DB5}.
\end{proof}

We now show that the digraph produced by Theorem~\ref{thm:DiscBalance} has a perfect decomposition.
Together with Theorem~\ref{thm:RemHighDisc} and Theorem~\ref{thm:DiscBalance}, this will give us all the ingredients to prove Theorem~\ref{thm:LPST2}.

\begin{theorem}
\label{thm:FinalDecomp}
Let $1/n \ll \alpha, \lambda, \eps \ll 1$. 
Suppose that $D$ is an $n$-vertex oriented graph, where 
\begin{itemize}
\item $\disc(D) \geq 2n$;  
\item $\delta(D) \geq (1 - \eps) n$; 
\item $1 \leq |\disc_{D}(v)| \leq \alpha n$ for all $v\in V(D)$;
\item $\left| |U^-(D')| - |U^+(D')| \right| \leq 2\lambda |D'|$.
\end{itemize}
Then $D$ has a perfect decomposition.
\end{theorem}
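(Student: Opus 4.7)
The plan is to reduce to Theorem~\ref{thm:KellyPaths}: I will construct a partial decomposition $\mathcal{P}$ of $D$ such that $D_3 := D - \cup\mathcal{P}$ meets the hypotheses of Theorem~\ref{thm:KellyPaths}. That theorem then provides a perfect decomposition $\mathcal{P}'$ of $D_3$ of size $d$, and Proposition~\ref{pr:PartDecomp1}(b) assembles $\mathcal{P} \cup \mathcal{P}'$ into a perfect decomposition of $D$ of size $(\disc(D)-d) + d = \disc(D)$, as required.

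First I would fix $d := \lfloor n/2 - 2\lambda n \rfloor$. Since every vertex of $D$ has nonzero discrepancy we have $|U^+(D)| + |U^-(D)| = n$, and combined with $\left||U^-(D)|-|U^+(D)|\right| \le 2\lambda n$ this gives $|U^{\pm}(D)| \ge n/2 - \lambda n \ge d$. Moreover $d^{\pm}_D(v) \ge (1 - \eps - \alpha)n/2 \ge d$ for every $v$, using $\eps, \alpha \ll \lambda$, and clearly $d \ge 3n/7$. I would then pick $X^+ \subseteq U^+(D)$ and $X^- \subseteq U^-(D)$ of size $d$ arbitrarily and set $X^0 := V(D) \setminus (X^+ \cup X^-)$, so $|X^0| = n - 2d = O(\lambda n)$.

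The target spanning subdigraph $D_3 \subseteq D$ should satisfy $d^+_{D_3}(v) = d$ for $v \in X^+ \cup X^0$ and $d^+_{D_3}(v) = d-1$ for $v \in X^-$, and symmetrically for $d^-_{D_3}$, so that $D_3$ exactly meets the hypotheses of Theorem~\ref{thm:KellyPaths}. Its existence as a degree-prescribed spanning subdigraph follows from a defect-Hall (equivalently max-flow) argument on the bipartite out/in-representation of $D$; feasibility is easy because the target degrees lie well below $d^{\pm}_D(v)$ and $D$ is almost complete. Writing $H := D - D_3$, we have $\disc(H) = \disc(D) - d$, with $\disc_H(v) \ge 0$ on $(X^+ \cup X^0) \cap U^+(D)$ (strictly positive unless $v \in X^+$ and $\disc_D(v) = 1$) and $\disc_H(v) \le 0$ symmetrically on the other side. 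Any edge-decomposition of $H$ into $\disc(H)$ paths would yield the partial decomposition $\mathcal{P}$ we seek, since the start/end profile of such a decomposition automatically respects the excess constraints at every vertex.

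The main obstacle is that $H$ need not be consistent: an Eulerian decomposition of $H$ typically leaves residual directed cycles, or even entire Eulerian subcomponents supported on the ``balanced'' vertices where $\disc_H = 0$, which must be absorbed into the path system. I plan to handle this by performing degree-preserving $4$-cycle swaps between $D_3$ and $H$: for each residual cycle or Eulerian component of $H$, the robust expansion of $D$ (which holds by Lemma~\ref{lma:outexpander}, as $\delta^0(D) \ge 3n/7$) together with Corollary~\ref{cor:manyshortpaths3} or Lemma~\ref{lma:shortpath} provides a short connecting $D$-path along which we can exchange an $H$-edge for a $D_3$-edge. Each such swap preserves the prescribed in/out-degrees of $D_3$ (so that Theorem~\ref{thm:KellyPaths} still applies afterwards) while strictly reducing the Eulerian residue of $H$. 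Because $H$ carries only $O(\lambda n^2)$ edges while $D_3$ offers $\Theta(n^2)$ spare edges, a bounded number of swap iterations suffices to make $H$ consistent. Its Eulerian decomposition then yields exactly $\disc(H)$ edge-disjoint paths, and applying Theorem~\ref{thm:KellyPaths} to the resulting $D_3$ completes the proof.
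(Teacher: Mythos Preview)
Your reduction to Theorem~\ref{thm:KellyPaths} is the right target, and the paper does exactly this. But the route you propose has a genuine gap in the last paragraph.

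You first find $D_3$ with the prescribed degree sequence by a Hall/flow argument and then set $H := D - D_3$; the remaining task is to decompose $H$ into exactly $\disc(H)$ paths. This is precisely the kind of consistency statement the whole paper is trying to prove, now pushed onto an oriented graph $H$ of maximum degree $O(\lambda n)$ with no robust-expansion properties to exploit. Your proposed fix---perform ``degree-preserving $4$-cycle swaps between $D_3$ and $H$'' that ``strictly reduce the Eulerian residue''---is not an argument: you have not defined the potential being decreased, you have not explained why a swap along a ``connecting $D$-path'' actually reduces it, and you have not shown termination (or that termination implies $H$ is consistent rather than merely acyclic). Note that even an acyclic $H$ does not suffice here unless you also control where $\disc_H$ vanishes; for instance, vertices $v\in X^0\cap U^+(D)$ with $\disc_D(v)=0$ cannot occur, but vertices in $X^+$ with $\disc_D(v)=1$ give $\disc_H(v)=0$, and an isolated cycle through only such vertices would defeat a naive greedy decomposition. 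The swap idea might be salvageable, but as written it is a plan, not a proof.

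The paper avoids this difficulty entirely by never passing through an uncontrolled $H$. Instead it builds $\mathcal{P}=\mathcal{P}_1\cup\mathcal{P}_2\cup\mathcal{P}_3$ as an explicit collection of paths in three stages, each time invoking Lemma~\ref{lma:manyshortpaths3} or Corollary~\ref{cor:manyshortpaths3} on $D$ (which \emph{is} a robust expander) to produce paths with prescribed endpoints and prescribed vertex sets. Stage~1 tunes all excesses to their target values (up to reserved corrections); Stage~2 equalises all degrees to a common value $2d'$ using Proposition~\ref{pr:balance} to choose the vertex sets of the paths; Stage~3 brings $d'$ down to $d$. Because every object removed is already a path with controlled endpoints, the question ``is $H$ consistent?'' never arises.

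A smaller issue: you invoke $\eps,\alpha \ll \lambda$ to get $d^{\pm}_D(v)\ge d$, but the hypothesis $1/n \ll \alpha,\lambda,\eps \ll 1$ does not order $\alpha,\lambda,\eps$ amongst themselves. The paper handles this by introducing a fresh parameter $\eps'$ with $\alpha,\lambda,\eps \ll \eps' \ll 1$ and taking $d = (1/2 - 52\eps')n$.
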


\begin{proof}
Fix a parameter $\eps'$ satisfying $1/n \ll \alpha, \lambda, \eps \ll \eps' \ll 1$ such that $\eps' n$ is an integer.
Let 
\begin{align*}
d = (1/2 - 52\eps')n \text{ and } t := n - 2d = 104 \eps'n.
\end{align*}
Arbitrarily partition $V(D)$ into $X^+, X^-, X^0$ such that 
\begin{align*}
	|X^+| &= |X^-| = d, &
	|X^0| &= t, &
	X^{\pm} \subseteq U^{\pm}:=U^{\pm}(D).
\end{align*}
(Note that such partition exists as $|U^{\pm}| \ge d$.)
Our goal is to remove a partial decomposition~$\mathcal{P}$ of~$D$ such that the resulting digraph $D' := D - \cup \mathcal{P}$ satisfies 
\begin{align}
\label{eq:degree}
\disc_{D'}(v) = 
\begin{cases}
1 &\text{if } v \in X^+; \\
0 &\text{if } v \in X^0; \\
-1 &\text{if } v \in X^-;
\end{cases}
\:\:\:\text{ and }\:\:\:
d_{D'}(v) = 
\begin{cases}
2d-1 &\text{if } v \in X^+; \\
2d &\text{if } v \in X^0; \\
2d-1 &\text{if } v \in X^-.
\end{cases}
\end{align}
Then $D'$ has a perfect decomposition~$\mathcal{P'}$ by Theorem~\ref{thm:KellyPaths} and so $\mathcal{P} \cup \mathcal{P}'$ is a perfect decomposition of $D$ (by Proposition~\ref{pr:PartDecomp1}(b)).
Thus it remains to find such a $\mathcal{P}$.

We will construct $\mathcal{P}$ as a union of three partial decompositions $\mathcal{P}_1,\mathcal{P}_2,\mathcal{P}_3$.
Let $D_0:= D$ and write $D_i := D_{i-1} - \cup \mathcal{P}_i$ for $i=1,2,3$. 
First, we reserve two multisets $A_2$ and $A_3$, which will be sets of starting and ending points of $\mathcal{P}_2$ and $\mathcal{P}_3$, respectively. 
Second, we find a partial decomposition~$\mathcal{P}_1$ such that $\disc_{D_1}(v)$ has the correct value provided $v \notin A_2 \cup A_3$ (see Claim~\ref{clm:F1}).
The partial decomposition $\mathcal{P}_2$ will ensure that $d_{D_2}(v) = 2d'-I_{X^+ \cup X^-}(v)$ for some $d' >d$.
Finally, we adjust~$d'$ to $d$ using $\mathcal{P}_3$.

Since $ \disc(D) \ge 2n$ and $ |\disc_{D}(v)| \leq \alpha n$, we know we can find vertices $x_1, \dots, x_{26\eps'n}, x'_1, \dots, x'_{26\eps'n} \in U^+$ such that $x_i \ne x'_i$ and no vertex $v \in U^+$ is chosen more than $(\disc_D(v) -1)/2$ times. 
Similarly, we are able to pick vertices $y_1, \dots, y_{26\eps'n}, y'_1, \dots, y'_{26\eps'n} \in U^-$ such that $y_i \ne y'_i$ and no vertex $v \in U^-$ is chosen more than $(|\disc_D(v)| -1)/2$ times. 
Clearly, $x_i,x'_i, y_i,y_i'$ are distinct for all~$i$. 
Let 
\begin{align*}
A_2 &:=\{x_i,x'_i, y_i,y_i' \colon i \in [25 \eps'n] \}, \\
A_3 &:=\{x_i,x'_i, y_i,y_i' \colon i \in [25\eps'n+1, 26 \eps'n] \}.
\end{align*}
For $j \in \{2,3\}$, let $\phi^+_j(v)$ (and $\phi^-_j(v)$) be the number of times that $v$ is chosen as $x_i$ or $x'_i$ (and $y_i$ or $y'_i$) in~$A_j$.
Let $\phi_j(v) := \phi^+_j(v)- \phi^-_j(v)$.
Note that $\sum_{v \in V(D)}\phi_j(v) = 0$ and $2|\phi_2(v)+\phi_3(v)| < |\disc_D(v)|$.

\begin{claim}  \label{clm:F1}
There exists a partial decomposition $\mathcal{P}_1$ of~$D$ such that, writing $D_1:=D- \cup \mathcal{P}_1$, we have  $\delta(D_1) \ge (1- \eps') n$ and for all $v \in V(D)$,
\begin{align*}
\disc_{D_1}(v) = 
\begin{cases}
2\phi_2(v)+2\phi_3(v) + \pm1  &\text{if } v \in X^{\pm}; \\
2\phi_2(v)+2\phi_3(v)  &\text{if } v \in X^0.
\end{cases}
\end{align*}
\end{claim}

\begin{proof}[Proof of claim]
Let $f:V(D) \rightarrow [n]$ be such that 
\begin{align*}
f(v) = 
\begin{cases}
\disc(v) - 2\phi_2(v)- 2\phi_3(v)  \mp 1 &\text{if } v \in X^{\pm}; \\
\disc(v) - 2\phi_2(v)- 2\phi_3(v)   &\text{if } v \in X^0.
\end{cases}
\end{align*}
Note that $\sum_{v \in V(D)} f(v) = 0$ and $|f(v)| \le \alpha n $ for all $v \in V(D)$. 
Define a directed multigraph~$H$ on~$V(D) $ such that $d^+_H(v) = \max\{f(v), 0\}$ and $d^-_H(v) = \max\{-f(v), 0\}$.
Note that $\Delta(H) \leq \alpha  n $.
We apply Corollary~\ref{cor:manyshortpaths3} with $D, H, \alpha$ playing the roles of $D, H, \gamma$ and obtain the desired partial decomposition~$\mathcal{P}_1$. 
\end{proof}

Let 
\begin{align*}
	s: = \max_{v \in V} \{ d_{D_1}(v) + I_{X^+ \cup X^-}(v)\}.
\end{align*}
Note that $d_{D_1}(v)$ is even if $v \in X^0$ and odd otherwise. 
So $s$ is even and $(1- \eps')n \le s \le n$. 
Let $d' := s/2 - 50 \eps' n$, so 
\begin{align}
\label{eqn:d'}
	d+\eps' n = (1/2 - 51 \eps') n \le  d' \le (1/2 - 50 \eps') n = d+2\eps' n.
\end{align}

\begin{claim}  \label{clm:F2}
There exists a partial decomposition $\mathcal{P}_2$ of~$D_1$ such that, for all $v \in V(D)$, $\disc_{\cup \mathcal{P}_2}(v) = 2\phi_2(v)$ and $d_{D_2}(v) = 2d'-I_{X^+ \cup X^-}(v)$,
where we write $D_2:=D_1- \cup \mathcal{P}_2$.
\end{claim}

\begin{proof}[Proof of claim]
Define $f:V(D) \rightarrow [\eps' n]$ to be such that 
\begin{align*}
f(v) = \eps' n - \frac{s - (d_{D_1}(v) + I_{X^+ \cup X^-}(v))}2.
\end{align*}
Note that $\max_{v \in V(D)} f(v) = \eps' n$.
Recall that $A_2 =\{x_i,x'_i, y_i,y_i' \colon i \in [25 \eps'n] \}$.
Write $(x^*_i, x^*_{25\eps'n+i}, y^*_i, y^*_{25\eps'n+i})  = (x_i,x'_i, y_i,y_i')$. 
 
By Proposition~\ref{pr:balance} where we take $(V(D), 25,\eps'n,x^*_i,y^*_i)$ to play the roles of $(V,t,m,x_i,y_i)$, we can find a collection of sets $T_1, \dots, T_{50 \eps' n } \subseteq V(D)$
\begin{enumerate}[label={\rm(\roman*$'$)}]
	\item for all $v \in V(D)$, $\sum_{i \in [50 \eps' n ]} I_{T_{i}}(v) =  f(v) + 49 \eps' m$;
	\label{itm:setT1}
	\item $|T_i| \ge  24 n/25$ for all $i \in [50 \eps' n ]$; \label{itm:setT2}
	\item $x_i^*,y_i^* \in T_i$  for all $i \in [50 \eps' n ]$. \label{itm:setT3}
\end{enumerate}

For $i \in[50 \eps' n]$, let $S_i := V(D) \setminus T_i$ and $H_i$ be the multidigraph on $V(D)$ with $E(H_i) = \{ x_i^*y_i^* , x_i^*y_i^*\}$.
Let $H = \bigcup_{i \in [50 \eps' n] }H_i$.
Note that $|E(H_i)| = \Delta(H_i) =2 $ and $|S_i| \le n/25 $. 
We apply Lemma~\ref{lma:manyshortpaths3} where we take $(D_1, H_i, S_i, 50 \eps' )$ to play the role of $(D, H_i, S_i, \gamma)$ and obtain edge-disjoint paths $P_1,P_1',  \ldots, P_{50 \eps' n}, P'_{50 \eps' n}$ such that both $P_i$ and $P_i'$ start at $x^*_i$ and end at $y^*_i$ and $d_{P_i \cup P_i'}(v) = 2 I_{T_{i}}(v)$ for all $ v \in V(D)$. 

Set $\mathcal{P} _2 = \{P_i,P'_i : i \in [50 \eps' n]\}$.
Note that $\disc_{\cup \mathcal{P}_2}(v) = 2\phi_2(v)$ for all $v \in V(D)$.
For all $v \in V(D)$,
\begin{align*}
d_{D_2}(v) & = d_{D_1}(v) - 2\sum_{i \in [50 \eps' n ]} I_{T_{i}}(v)
\overset{\text{\ref{itm:setT1}}}{=}
 d_{D_1}(v) - 2(f(v) + 49 \eps' n)\\
& = s - 100\eps' n - I_{X^+ \cup X^-}(v)
 = 2d' - I_{X^+ \cup X^-}(v),
\end{align*}
as required.
\end{proof}

\begin{claim}  \label{clm:F3}
There exists a partial decomposition $\mathcal{P}_3$ of~$D_2$ such that for all $v \in V(D)$, $\disc_{\cup \mathcal{P}_3}(v) = 2\phi_3(v)$ and $d_{D_3}(v) = 2d-I_{X^+ \cup X^-}(v)$,
where we write $D_3:=D_2- \cup \mathcal{P}_3$.
\end{claim}

\begin{proof}[Proof of claim]
Recall that $A_3 :=\{x_i,x'_i, y_i,y_i' \colon i \in [25\eps'n+1, 26 \eps'n] \}$. 
Let $m := d' - d-\eps' n$, so $0 \le m \le \eps'n$ by~\eqref{eqn:d'}. 

We now define multidigraphs $H_1, \dots, H_{m + \eps' n}$  on $V(D)$ as follows. Define $f(i) = 25\eps'n + i$.  
For $i \in [m]$, 
\begin{align*}
	E(H_i)&:= \{x_{f(i)}y_{f(i)},\; x_{f(i)}y_{f(i)}\}, &
	E(H_{\eps' n +i})&:= \{x_{f(i)}'y_{f(i)}',\; x_{f(i)}'y_{f(i)}'\}.
\end{align*}
For $i \in [m+1, \eps'n]$, set
\begin{align*}
		E(H_i)&:= \{x_{f(i)}y_{f(i)},\; x_{f(i)}y_{f(i)},\; x_{f(i)}'y_{f(i)}',\; x_{f(i)}'y_{f(i)}'\}.
\end{align*}
Note that $|E(H_i)| \le 4$ and $ \Delta(H_i) =2 $. 
Let $H:= \bigcup_{i \in [\eps' n + m]}H_i$.
We apply Lemma~\ref{lma:manyshortpaths3} with $(D_2, H_i, \emptyset, 2 \eps' )$ playing the roles of $(D, H_i, S_i, \gamma)$ to obtain a set of edge-disjoint paths $\mathcal{P}_3 = \{P_e \colon e \in E(H) \}$ such that $d_{\cup \mathcal{P}_3}(v) = 2 (\eps' n + m) = 2(d' - d)$ and $\disc_{\cup \mathcal{P}_3}(v) = 2\phi_3(v)$ for all $v \in V(D)$. Note that $\mathcal{P}_3$ is a partial decomposition of $D_2$ by our choice of $H_i$ and that $D_3 =D_2 - \cup \mathcal{P}_3$ satisfies the desired properties.
\end{proof}

For all $v \in V(D)$,
\begin{align*}
\disc_{D_3}(v) = \disc_{D_1}(v) -  \disc_{\cup \mathcal{P}_2}(v) - \disc_{\cup \mathcal{P}_3}(v) = 
\begin{cases}
\pm 1 &\text{if } v \in X^{\pm}; \\
0 &\text{if } v \in X^0.
\end{cases}
\end{align*}
Also $d_{D_3}(v) = 2d-I_{X^+ \cup X^-}(v)$ for all $v \in V(D)$ by Claim~\ref{clm:F3}. We are done by setting $D':=D_3$.
\end{proof}

\subsection{Final proof}

Now we can finally prove our main theorem.

\begin{proof}[Proof of Theorem~\ref{thm:LPST2}]
Assume $1/n_0 \ll 1/C \ll 1$ and that $T$ is an even tournament with $n \geq n_0$ vertices and $\disc(T) \geq Cn$.

We pick parameters $\alpha_1, \beta_1, \eps_1, \alpha_2, \beta_2, \eps_2, \lambda$ satisfying:
\begin{equation}
\label{eq:finalhierarchy}
1/n \ll 1/C \ll \beta_1 \ll \alpha_1, \beta_2 \ll \eps_1 \ll \eps_2, \lambda \ll 1 \:\:\:\text{ and }\:\:\: 1/C \ll \lambda. 
\end{equation}

By Theorem~\ref{thm:RemHighDisc} either $T$ has a perfect decomposition or there is a digraph $D_1$ satisfying the following properties:
\begin{enumerate}[label={\rm(a$_{\arabic*}$)}]
\item If $D_1$ has a perfect decomposition then $T$ has a perfect decomposition;
\item $n_1:=|D_1| \geq (1 - \beta_1)n$ with $n_1$  even;
\item $\delta(D_1) \geq (1 - \eps_1)n_1$;
\item $1 \leq |\disc_{D_1}(v)| \leq \alpha_1 n_1$ for all $v\in V(D_1)$;
\item $\disc(D_1) \geq (C/4 - 5)n_1 =:C_1n_1$.
\end{enumerate}

By Theorem~\ref{thm:DiscBalance} there exists a digraph $D_2$ satisfying the following properties:
\begin{enumerate}[label={\rm(b$_{\arabic*}$)}]
\item If $D_2$ has a perfect decomposition then $D_1$ has a perfect decomposition;
\item $n_2:=|D_2| \geq (1 - \beta_2)n_1$ with $n_2$ even; 
\item $\delta(D_2) \geq (1 - \eps_2)n_2$;
\item $1 \leq |\disc_{D_2}(v)| \leq \alpha_1 n_1 \leq 2\alpha_1 n_2$ for all $v\in V(D_2)$;
\item $\disc(D_2) \geq \lambda C_1 n_2 /32 =:C_2n_2 \geq 2n_2$;
\item $\left| |U^-(D_2)| - |U^+(D_2)| \right| \leq 2\lambda n_2$.
\end{enumerate}

Note that by \eqref{eq:finalhierarchy}, we have $1/n_2 \ll  2\alpha_1, \lambda, \eps_2 \ll 1$ since $n_2 \geq (1- \beta_1)(1 - \beta_2)n \geq n/2$.
By Theorem~\ref{thm:FinalDecomp}, $D_2$ has a perfect decomposition; hence so does $D_1$ (by~(b$_1$)) and so does $T$ (by (a$_1$)) as required. 
\end{proof}

\section{Conclusion}
\label{se:conc}

We have proved many cases of Conjecture~\ref{conj:Pull}. The obvious open problem remaining is to fill the remaining gap, that is to prove that $\pn(T) = \ex(T)$ for all even tournaments satisfying $n/2 < \disc(T) \leq Cn$ for some sufficiently large $C$. We believe that with a little work, one should be able to apply the results of K{\"u}hn and Osthus~\cite{KO} to prove the conjecture when $\ex(T)$ is very close to $n/2$ but that probably some new ideas are needed, say when $n \leq \disc(T) \leq Cn$.

Another direction, which is currently work in progress, is to investigate analogues of Conjecture~\ref{conj:Pull} for  directed graphs that are not tournaments. In forthcoming work we consider dense directed graphs as well as random and quasi-random directed graphs.

\end{document}